\newtheorem{theorem}{Theorem}[section]
\newtheorem{definition}{Definition}[section]
\newtheorem{example}{Example}[section]
\newtheorem{corollary}{Corollary}[section]
\newtheorem{remark}{Remark}[section]
\numberwithin{equation}{section}
\title{Chain sequences and Zeros of a perturbed $R_{II}$ type recurrence relation}
\author{Vinay Shukla$^\dagger$}
\address{$^\dagger$Department of Mathematics\\ Indian Institute of Technology, Roorkee-247667, Uttarakhand, India}
\email{vshukla@ma.iitr.ac.in}
\author[A. Swaminathan]{Anbhu Swaminathan\, $^{\#\ddagger}$}{\thanks{$^{\#}$Corresponding author}}
   \address{$^\ddagger$Department of Mathematics\\ Indian Institute of Technology, Roorkee-247667, Uttarakhand, India}
   \email{mathswami@gmail.com, a.swaminathan@ma.iitr.ac.in}
\begin{document}
%\leftline{ \scriptsize \it  }
\keywords{Complementary chain sequence; Finite perturbations; Transfer matrix; Verblunsky coefficients; Orthogonal polynomials; $R_{II}$ type recurrence; Zeros;}

\subjclass[2020] {42C05, 30C15, 15A24}
	
\begin{abstract}
In this manuscript, new algebraic and analytic aspects of the orthogonal polynomials satisfying $R_{II}$ type recurrence relation given by 
\begin{align*}
\mathcal{P}_{n+1}(x) = (x-c_n)\mathcal{P}_n(x)-\lambda_n (x-a_n)(x-b_n)\mathcal{P}_{n-1}(x), \quad n \geq 0,
\end{align*}
where $\lambda_n$ is a positive chain sequence and $a_n$, $b_n$, $c_n$ are sequences of real or complex numbers with $\mathcal{P}_{-1}(x) = 0$ and $\mathcal{P}_0(x) = 1$ are investigated when the recurrence coefficients are perturbed. Specifically, representation of new perturbed polynomials (co-polynomials of $R_{II}$ type) in terms of original ones with the interlacing and monotonicity properties of zeros are given. For finite perturbations, a transfer matrix approach is used to obtain new structural relations. Effect of co-dilation in the corresponding chain sequences and their consequences onto the unit circle are analysed. A particular perturbation in the corresponding chain sequence called complementary chain sequences and its effect on the corresponding Verblunsky coefficients is also studied. 
%arising as a result of change in minimal parameter sequence 
\end{abstract}

\maketitle
\markboth{Vinay Shukla and A. Swaminathan}{Perturbed $R_{II}$ type recurrence relation}

\section{Introduction}\label{Introduction}
%\section{Motivation of the problem}
%The study of perturbation of the coefficients of a recurrence relation is a very old but is still an interesting problem and has been dealt in numerous ways by many authors. 
The fact that the construction of new sequences by modifying the original sequence is a powerful tool with many applications to the theoretical and physical problems form the basis of perturbation theory.
Orthogonal polynomials on the real line (OPRL) satisfy a second order recurrence relation \cite{Esmail book} of the form
\begin{align}\label{OPRL TTRR R2}
	\mathcal{R}_{n+1}(x) = (x-b_{n})\mathcal{R}_n(x)-\gamma_n \mathcal{R}_{n-1}(x), \quad \mathcal{R}_{-1}(x) = 0, \quad \mathcal{R}_0(x) = 1, \quad n \geq 0.
\end{align}
The analysis by adding a constant to the first coefficient $b_0$, called co-recursive, is available in the pioneer work \cite{chihara PAMS 1957}. Such perturbations are not artificial as they are useful objects in the study of Hamiltonian operator appearing in quantum mechanics. Motivated by this problem, properties of the co-recursive, co-dilated and co-modified polynomials have been studied \cite{Dini thesis 1988, Ronveaux vigo 1988, Slim 1988}. Later, it was needed to make perturbations anywhere on the surface of the target many-body system and to study how the spectroscopic properties gets changed. This led to the study of generalized co-recursive, co-dilated and co-modified polynomials \cite{Paco perturbed recurrence 1990}. In this direction, finite perturbations were studied in \cite{Peherstorfer 1992}. For perturbations of recurrence coefficients occuring in higher order recurrences and its extensions to Sobolev OPRL, see \cite{Leopold 2 2007, Leopold 3 2008}. Recently, a transfer matrix approach is introduced in \cite{Castillo co-polynomials on real line 2015} to study polynomials perturbed in a (generalized) co-dilated and/or co-recursive way. \par 
The sequence of monic orthogonal polynomials on the unit circle (OPUC) denoted by $\{\phi_n(z)\}_{n=0}^\infty$, satisfy Szeg\H{o} recurrence given as
\begin{align}\label{Szego recurrence R2}
	\begin{bmatrix}
		\phi_{n+1}(z)	\\
		\phi^*_{n+1}(z)
	\end{bmatrix} & = T_n(z) \begin{bmatrix}
		\phi_{n}(z)	\\
		\phi^*_{n}(z)
	\end{bmatrix}, \quad T_n(z) = \begin{bmatrix}
		z &	-\overline{\alpha_n} \\ 
		-\alpha_n z & 1
	\end{bmatrix},
\end{align}
with initial condition $\phi_0(z)=1$, where $\phi^*_{n}(z)=z^n \overline{\phi_n(1/z)}$ is the reversed polynomial and $T_n(z)$ is said to be the transfer matrix \cite{Simon part1 2005}. The elements of the sequence $\{\alpha_n\}_{n=0}^\infty$ where $\alpha_n = -\overline{\phi_{n+1}(0)}$ are known as Verblunsky coefficients and lie in the unit disc. \par 
Verblunsky theorem states that the OPUC are completely determined by their reflection coefficients. This fact motivated the authors in \cite{Castillo perturbed szego 2014} to study perturbations of Verblunsky coefficients. Again, a transfer matrix approach is used to study the so called co-polynomials on unit circle (COPUC). The structural relations and rational spectral transformation for C-functions, associated with COPUC have been discussed.  \par 
%Recurrence relation of the form
%\begin{align}\label{r_n R2}
%	r_{n+1}(z) = ((1+i\beta_{n+1})z+(1-i\beta_{n+1}))r_{n}(z)-4d_{n+1}zr_{n-1}(z),
%\end{align}
%with initial conditions $r_0 = 1$ and $r_1 = (1+i\beta_{1})z+(1-i\beta_{1})$ have been studied extensively in \cite{Castillo monotonicity R1 2015, Castillo costa ranga veronese Favard theorem 2014}to cite a few. 
%In \cite{Castillo costa ranga veronese Favard theorem 2014}, authors have shown that associated with the chain sequence $\{\tau_n\}_{n=0}^\infty$ having $\{m_n\}_{n=0}^\infty$ as a minimal parameter sequence and for real sequence $\{\beta_n\}_{n=0}^\infty$ there exists a unique non-trivial probability measure supported on the unit disc such that 
%\begin{align*}
%	\varphi_0(z)=1, \quad \varphi_{n}(z)\prod_{k=1}^{n}(1+i\beta_{k})=r_{n}(z)-2(1-m_n)r_{n-1}(z),
%\end{align*}
%is the corresponding sequence of monic OPUC. 
%Perturbation of co-recursive type in $\beta_n$ has been investigated in \cite{Castillo monotonicity R1 2015} from view point of interlacing properties and some new inequalities involving zeros. \par 
The $R_I$ type recurrence relation \cite{Esmail masson JAT 1995} are given as
\begin{align}\label{R1 R2 pp}
	&  \mathcal{U}_{n+1}(z) = k_n(z-c_n)\mathcal{U}_n(z)-\lambda_n (z-a_n)\mathcal{U}_{n-1}(z), \quad n\geq 0, \\ %\quad \lambda_n \neq 0 ,\quad n\geq 1  
	& \nonumber	\mathcal{U}_{-1}(z) = 0, \qquad \mathcal{U}_0(z) = 1. %\qquad \mathcal{P}_n(a_n) \neq 0
\end{align}
Perturbations of parameters of $R_{I}$ type recurrence relation, distribution of zeros, interlacing properties and spectral transformations have been dealt in detail in a previous work by the authors \cite{swami vinay 2021}. \par 
%Note that, under ceratin restrictions, \eqref{r_n R2} can be viewed as a special case of \eqref{R1 R2 pp}. 
%In this direction, results related to co-recursive $d-$ orthogonal polynomials \cite{saib zerouki 2013},  co-recursive $q-$ classical orthogonal polynomials \cite{Foupouagnigni Ronveaux 2001}, co-dilated and/or co-recursive of classical discrete orthogonal polynomials \cite{Foupouagnigni Koepf Ronveaux 2003} are available in the literature. For some applications, see \cite{Erb 2012, Erb 2015} and \cite{Slim 1988}. \par 
Recurrence relation of the form
\begin{align}\label{R2}
&  \mathcal{P}_{n+1}(x) = (x-c_n)\mathcal{P}_n(x)-\lambda_n (x-a_n)(x-b_n)\mathcal{P}_{n-1}(x), \quad n\geq 0, \\ %\quad \lambda_n \neq 0 ,\quad n\geq 1  
& \nonumber	\mathcal{P}_{-1}(x) = 0, \qquad \mathcal{P}_0(x) = 1. %\qquad \mathcal{P}_n(a_n) \neq 0
\end{align}
has been studied in \cite{Esmail masson JAT 1995}. Infact, it was shown that if $\lambda_n \neq 0$ with $\mathcal{P}_n(a_n)\neq 0$,  $\mathcal{P}_n(b_n)\neq 0$ for $n\geq 0$, then there exits a rational function $\psi_n(x)=\dfrac{\mathcal{P}_n(x)}{\prod_{j=1}^{n}(x-a_j)(x-b_j)}$ and a linear functional $\mathfrak{N}$ such that the orthogonality relations
\begin{align*}
	\mathfrak{N}\left[x^k \psi_n(x) \right]\neq  0, \quad 0\leq k < n,
\end{align*}
holds \cite[Theorem 3.5]{Esmail masson JAT 1995}. Conversely, starting from a sequence of rational function $\psi_n(x)$ satisfying a three-term recurrence relation and having poles at $\{a_k\}_{k=0}^\infty$ and $\{b_k\}_{k=0}^\infty$, we can obtaion \eqref{R2}. Following \cite{Esmail masson JAT 1995}, the recurrence relation \eqref{R2} will be referred as recurrence relation of $R_{II}$ type and the $\mathcal{P}_n(x), n\geq 1$, generated by it as $R_{II}$ polynomials. \par 
%Recurrence relation of the form \eqref{R2} has several application in the study of biorthogonal system of rational functions, quardrature rules \cite{Bracciali Pereira ranga 2020} multipoint rational interpolants or pade approximation. \par %and associated continued fraction as $R_I$-fractions. 
Note that the infinite continued fraction
\begin{align}\label{R2 continued fraction}
	\mathcal{R}_{II}(x) = \frac{1}{x-c_0} \mathbin{\genfrac{}{}{0pt}{}{}{-}} \frac{\lambda_1(x-a_1)(x-b_1)}{x-c_1} \mathbin{\genfrac{}{}{0pt}{}{}{-}} \frac{\lambda_2(x-a_2)(x-b_2)}{x-c_2} \mathbin{\genfrac{}{}{0pt}{}{}{-}} \mathbin{\genfrac{}{}{0pt}{}{}{\cdots}} ,
\end{align}
terminates when $x=a_k$ or $x=b_k$, $k \geq 1$. Following \cite{Esmail masson JAT 1995}, we call it as $R_{II}$-fraction. 
For the ease of notation (following \cite{Beckermann Derevyagin Zhedanov linear pencil 2010}), \eqref{R2 continued fraction} can also be written as
\begin{align}
\mathcal{R}_{II}(x) = \frac{1}{w_{0}(x)} \mathbin{\genfrac{}{}{0pt}{}{}{-}} \frac{\chi_{1}^{L}(x)\chi_{1}^{R}(x)}{w_{1}(x)} \mathbin{\genfrac{}{}{0pt}{}{}{-}} \frac{\chi_{2}^{L}(x)\chi_{2}^{R}(x)}{w_{2}(x)} \mathbin{\genfrac{}{}{0pt}{}{}{-}} \mathbin{\genfrac{}{}{0pt}{}{}{\cdots}} ,
\end{align}
where $w_{j}(x)$, $\chi_{j}^{L}(x)$ and $\chi_{j}^{R}(x)$ are non-zero polynomials of degree one. 
%We will use both notations conviniently as per the context.If we terminate the above continued fraction at $x=a_n$ or $x=b_n$,
The polynomials $\mathcal{P}_n(x)$ of degree atmost $n$ are the numerator polynomials associated with \eqref{R2 continued fraction}. Also, the polynomials of second kind of degree atmost $n-1$ \cite[page 16]{Esmail masson JAT 1995} associated with recurrence relation \eqref{R2} are given by
\begin{align*}
	&  \mathcal{Q}_{n+1}(x) = (x-c_n)\mathcal{Q}_n(x)-\lambda_n (x-a_n)(x-b_n)\mathcal{Q}_{n-1}(x), \quad n\geq 1, \\ %\quad \lambda_n \neq 0 ,\quad n\geq 1  
	& \nonumber	\mathcal{Q}_{0}(x) = 0, \qquad \mathcal{Q}_1(x) = 1,
\end{align*}
such that the rational function $\dfrac{\mathcal{Q}_n(x)}{\mathcal{P}_n(x)}$ is the $n$-th convergent of the continued fraction \eqref{R2 continued fraction} and is associated to a linear pencil $x\mathcal{J}_n-\mathcal{H}_n$ where $\mathcal{J}_n$ and $\mathcal{H}_n$ are tridiagonal. In matrix notation, we have,
\begin{align*}
	x\mathcal{J}_n-\mathcal{H}_n = \begin{pmatrix}
		w_{0}(x) &	-\chi_{1}^{R}(x) & 0 & \ldots & 0 & 0\\ 
		-\chi_{1}^{L}(x) &  w_{1}(x) & -\chi_{2}^{R}(x) & \ldots & 0 & 0\\
		0 & -\chi_{2}^{L}(x) & w_{2}(x) &  \ldots & 0 & 0\\
	\vdots	& \vdots & \vdots & \ddots & \vdots & \vdots\\
	0 & 0 & 0 & \ldots & w_{n-1}(x) & -\chi_{n}^{R}(x)\\
	0 & 0 & 0 & \ldots & -\chi_{n}^{L}(x) & w_{n}(x)
	\end{pmatrix}.
\end{align*}
The polynomial $\mathcal{P}_n(x)$ satisfying \eqref{R2} is the characteristic polynomial of the pencil matrix $x\mathcal{J}_n-\mathcal{H}_n$ \cite{Zhedanov Rational spectral 1997}. In \cite[Theorem 3.7]{Esmail masson JAT 1995}, the existence of a natural Borel measure associated with $R_{II}$-fraction was also established. $R_{II}$ type recurrence relation and corresponding $R_{II}$ polynomials have a rich literature as they are dealt in numerous ways by many authors. The little and big $-1$ Jacobi polynomials and their $q$ analogues are extensively studied in the literature \cite{ Derevyagin Vinet Zhedanov Constr. Approx. 2012, Esmail book, Vinet Zhedanov transaction 2012}. It is illustrated in \cite{KKB Swami PAMS 2019} that they can also be related to $R_{II}$- type recurrence relation either via linear pencil matrix or by a simple m\"obius transformation. The rational functions satisfying a doubly spectral relation which can be reduced to $R_{II}$- type recurrence relation under specific conditions have been shown related to Pseudo-Jacobi polynomials, which are also called the Routh-Romanovski polynomials \cite{Derevyagin JAT 2017}. Further, the complementary Routh-Romanovski polynomials (abbreviated as CRR polynomials), which follow from complex form of Jacobi polynomials, satisfy a special $R_{II}$- type recurrence \eqref{special R2}. Such polynomials are shown to be related to a special class of orthogonal polynomials on the unit circle. Also, these are useful in the study of Schr\H{o}dinger equations, regular Coulomb wave functions and extended regular Coulomb wave functions \cite{Finkelshtein Ribeiro Ranga Tyaglov PAMS 2019, Finkelshtein Ribeiro Ranga Tyaglov CRR 2020}. Interested authors may look at \cite{KKB Swami PAMS 2019, Bracciali Pereira ranga 2020, Esmail Ranga 2018} and references therein for some recent progress related to $R_{II}$ type recurrence relations and $R_{II}$ polynomials. \par 
The objective of this manuscript is to study some properties of polynomials which satisfy the recurrence relation \eqref{R2} with new recurrence coefficients, perturbed in a (generalized) co-dilated and/or co-recursive way, i.e.,
\begin{align}\label{R2 perturbed}
	\mathcal{P}_{n+1}(x;\mu_k,\nu_k) = (x-c^*_n)\mathcal{P}_n(x;\mu_k,\nu_k)-\tilde{\lambda}_n (x-a_n)(x-b_n)\mathcal{P}_{n-1}(x;\mu_k,\nu_k),
\end{align}
with initial conditions $\mathcal{P}_0(x) = 1$ and $\mathcal{P}_{-1}(x) =0$. In other words, we consider arbitrary single modification of recurrence coefficients as follows:
\begin{align}
	c^*_n &= c_n+\mu_k \delta_{n,k}, \qquad \rm(co-recursive) \label{co-recursive condition R2}\\
	\tilde{\lambda}_n &= \nu_k^{\delta_{n,k}} \lambda_n, \qquad \rm(co-dilated) \label{co-dilated condition R2}
\end{align}
where $k$ is a fixed non-negative integer number. As far as we know, this problem has not been studied in the literature. \par 
A sequence $\{\lambda_{n+1}\}_{n \geq 1}$ is a positive chain sequence \cite{Chihara book 1978}, if there exists another sequence $\{m_n\}_{n \geq 1}$ such that $0 \leq m_1 <1$, $ 0 <m_{n+1} < 1$ and $(1-m_{n})m_{n+1} = \lambda_{n+1}$ for $n\geq 1$. The sequence $\{m_n\}_{n \geq 1}$ is called a parameter sequence of the positive chain sequence $\{\lambda_{n+1}\}_{n \geq 1}$. A positive chain sequence can have multiple (infinitely many) parameter sequences, but it always has a minimal parameter sequence. Denote by $\{l_n\}_{n \geq 1}$, the minimal parameter sequence of the positive chain sequence $\{\lambda_{n+1}\}_{n \geq 1}$ which is given by $l_1 = 0$, $ 0 <l_{n+1} < 1$ and $(1-l_{n})l_{n+1} = \lambda_{n+1}$ for $n\geq 1$. We study the effect of a special type of perturbation called co-dilation in the chain sequence $\{\lambda_{n+1}\}_{n \geq 1}$, that gives a new set of Verblunsky coefficients from which the sequence of Szeg\H{o} polynomials is constructed (see \Cref{Co-polynomials and consequences on unit circle} and \Cref{Co-dilation and Chain sequences}). A new sequence of $R_{II}$ polynomials is obtained in two ways: one, in \Cref{Connection with unit circle}, via a direct computation using Szeg\H{o} polynomials and another from the transfer matrix approach given in \Cref{A transfer matrix approach R2 pp}. Further, it is shown that the results obtained from both the approach are consistent. Apart from this, such perturbations lead to interesting consequences related to parameter sequences, corresponding measure and quadrature rules.\par 

This paper is organized as follows: In \Cref{Co-polynomials of R2 type}, theory related to the study of new polynomials arising out of the old ones is developed. \Cref{Connection with unit circle} involves study of two forms of perturbation in the chain sequence $\{\lambda_{n+1}\}_{n \geq 1}$, one is the co-dilation and another called complementary chain sequences. Their effects on the corresponding OPUC are analysed by the means of separate illustrations. In \Cref{Distribution of Zeros R2 pp}, some interlacing, monotonicity and sharing properties concerning zeros are discussed. The key ingredient of this section are CRR polynomials. In \Cref{A transfer matrix approach R2 pp}, the structural relation based on a transfer matrix approach presented in \cite{Castillo co-polynomials on real line 2015} are found when finite number of perturbations are made. %\Cref{The Stieljes function R2 pp} emphasizes on a rational spectral transformation point of view. Consequences on unit circle, a potential application and further aspects are given in \Cref{Connection with unit circle}.

\section{Co-polynomials of $R_{II}$ type}\label{Co-polynomials of R2 type}
Consider the recurrence relation
\begin{align}\label{special R2}
	&\mathcal{P}_{n+1}(x) = (x-c_n)\mathcal{P}_n(x)-\lambda_n (x^2+1)\mathcal{P}_{n-1}(x), \quad n\geq 0, \\    
	& \nonumber	\mathcal{P}_{-1}(x) = 0, \qquad \mathcal{P}_0(x) = 1,
\end{align}
where $\{c_n\}_{n \geq 0}$ is a real sequence and $\{\lambda_n\}_{n \geq 1}$ is a positive chain sequence. Perturbing the coefficients $c_n$ and $\lambda_n$ at any fixed level $k$ according to Favard theorem \cite[Theorem 3.5]{Esmail masson JAT 1995} generates new sequence of $R_{II}$ polynomials. Modification of $c_n$, called generalised co-recursive, modification of $\lambda_n$, called generalised co-dilated and modification of both $c_n$ and $\lambda_n$ at the same level $k$, called generalised co-modified, are considered.\par %A more general situation in finite case can be any change at any level.  
For orthogonal polynomials satisfying \eqref{OPRL TTRR R2}, perturbation of the form \eqref{co-recursive condition R2} for the case $k=0$, called co-recursive was introduced and studied in \cite{chihara PAMS 1957}. Perturbations like \eqref{co-dilated condition R2} for the case $k=1$, called co-dilated were introduced by \cite{Dini thesis 1988}. The general case, called generalized co-recursive and generalized co-dilated arising from perturbation of coefficients in TTRR at any level was studied in \cite{Paco perturbed recurrence 1990}.\par 
%from view point of Stieltjes function, fourth order differential equation and distribution of zeros.  
The properties of the co-modified classical orthogonal polynomials has been studied in \cite{Ronveaux vigo 1988}. Interlacing properties and some new inequalities involving the zeros of co-modified OPRL, called co-polynomials on real line (COPRL) has been investigated in \cite{Castillo co-polynomials on real line 2015, Castillo chapter 2017}. For details on co-polynomials on unit circle and co-polynomials of $R_{I}$ type, see \cite{Castillo perturbed szego 2014} and \cite{Castillo monotonicity R1 2015, swami vinay 2021} respectively. \par 

%\section{Representation of new polynomials}\label{Representation of new polynomials R2 pp}
%Perturbing the coefficients $c_n$ and $\lambda_n$ at any fixed level $k$ according to Favard theorem \cite[Theorem 3.5]{Esmail masson JAT 1995} generates new sequence of $R_{II}$ polynomials. A single modification of $c_n$ at the level $k$, called generalised co-recursive, modification of $\lambda_n$ at the level $k$, called generalised co-dilated and modification of both $c_n$ and $\lambda_n$ at the same level $k$, called generalised co-modified, have been considered. A more general situation in finite case can be any change at any level. 

%\subsection{Generalised co-modified polynomials}\label{Generalised co-modified polynomials R2}\hfill \\
Recall that polynomials generated by \eqref{R2} are called $R_{II}$ polynomials. Consider the case when both modifications \eqref{co-recursive condition R2} and \eqref{co-dilated condition R2} are done at the same level $k$ in \eqref{R2}. A more general situation can be the introduction of $\mu_k$ at the level $k$ and $\nu_k'$ at the level $k'$.
%\begin{align*}
%	c_k^* = c_k+\mu_k, \qquad c_i^* = c_i, \quad \text{$i \neq k$}, \quad \mu_k \in \mathbb{R}, \\
%	\tilde{\lambda}_k=\nu_k \lambda_k , \qquad \tilde{\lambda}_i= \lambda_i, \quad \text{$i \neq k$}, \quad \nu_k>0.
%\end{align*}
%The properties co-modified of classical orthogonal polynomials has been studied in \cite{Ronveaux vigo 1988}, its extension for semi-classical orthogonal polynomials was investigated in \cite{Dini Maroni Ronveaux 1989}. Interlacing properties and some new inequalities for the zeros of co-modified OPRL, called co-polynomials on real line (COPRL) has been investigated in \cite{Castillo co-polynomials on real line 2015}.
Following the terminology given in \cite{Paco perturbed recurrence 1990}, the recurrence relation so obtained will be called generalised co-modified polynomials of $R_{II}$ type (or simply {\it co-polynomials of $R_{II}$ type}) and are given by 
\begin{align*}
	\mathcal{P}_{n+1}(x;\mu_k,\nu_k) &= (x-c_n)\mathcal{P}_{n}(x;\mu_k,\nu_k)-\lambda_n (x^2+1)\mathcal{P}_{n-1}(x;\mu_k,\nu_k), \qquad \mbox{$n < k$}, \\
	\mathcal{P}_{k+1}(x;\mu_k,\nu_k) &= (x-c_k-\mu_k)\mathcal{P}_{k}(x;\mu_k,\nu_k)-\nu_k \lambda_k (x^2+1)\mathcal{P}_{k-1}(x;\mu_k,\nu_k), ~ \mbox{$n = k$}, \\
	\mathcal{P}_{n+1}(x;\mu_k,\nu_k) &=(x-c_n)\mathcal{P}_{n}(x;\mu_k,\nu_k)-\lambda_n (x^2+1)\mathcal{P}_{n-1}(x;\mu_k,\nu_k),\qquad \mbox{$n \geq k+1$}.
\end{align*}
\begin{remark}
It can be easily verified that associtaed polynomials of order r, $\mathcal{P}^{(r)}_{n}$ satisfy
\begin{align*}
\mathcal{P}^{(r)}_{n+1}(x) = (x-c_{n+r})\mathcal{P}^{(r)}_n(x)-\lambda_{n+r} (x^2+1)\mathcal{P}^{(r)}_{n-1}(x), \quad n\geq 0,
\end{align*}
with initial conditions $\mathcal{P}^{(r)}_{-1}(x)=0$ and $\mathcal{P}^{(r)}_{0}(x)=1$. Hence, by Favard theorem \cite{Esmail masson JAT 1995}, there exits a moment functional with respect to which $\{\mathcal{P}^{(r)}_{n} \}$ is also a sequence of $R_{II}$ poynomials.
\end{remark}
We can solve the last recurrence relation in terms of $\mathcal{P}_n$ and associated polynomials of order $r$, i.e. $\mathcal{P}^{(r)}_{n-r}$ to obtain the representation of new perturbed polynomials in terms of unperturbed ones in the following way (see \Cref{Theorem s_k_x R2 pp}). \par 
%\begin{align}\label{new in terms of old ones}
%\mathcal{P}_{n+1}(x;\mu_k,\nu_k) &= \mathcal{P}_{n+1}(x)+[(1-\nu_k)\lambda_k (x-a_k)(x-b_k) \mathcal{P}_{k-1}(x)-\mu_k \mathcal{P}_k(x)] \mathcal{P}^{(k)}_{n-k}(x), ~ \mbox{$n \geq k$}, \nonumber\\
%\mathcal{P}_{n+1}(x;\mu_k,\nu_k) &= \mathcal{P}_{n+1}(x), \qquad \mbox{$n < k$}.
%\end{align} 
Let us define
\begin{align}\label{casarotti determinant R2}
	D(u_n,v_n)=\begin{array}{|cc|}
		u_n & v_n \\
		{u}_{n+1} & {v}_{n+1},
	\end{array}
\end{align}
the Casoratti determinant associated with two arbitrary sequences $\{u_n\}$ and $\{v_n\}$. From the theory of linear difference equations, we know that the two sequences are said to be linearly independent if the Casoratti determinant is non-zero for every $n$ \cite{milne 1951}. Let us consider
\begin{align*}
	\mathbb{P}_{n+1} &=
	\begin{bmatrix}
		\mathcal{P}_{n+1} & \mathcal{P}_n	
	\end{bmatrix}^T, \qquad
	\mathbf{T}_n= \begin{bmatrix}
		x-c_n & -\lambda_n (x^2+1) \\
		1 & 0
	\end{bmatrix},
\end{align*}
Now, from \eqref{R2}, we have
\begin{align*}
	\mathbb{P}_{n+1}&= \mathbf{T}_n \mathbb{P}_n = \begin{bmatrix}
		x-c_n & -\lambda_n (x^2+1) \\
		1 & 0
	\end{bmatrix} \begin{bmatrix}
		\mathcal{P}_n \\
		{\mathcal{P}}_{n-1}
	\end{bmatrix}, 
	%= \begin{bmatrix}
	%	\mathcal{P}_{n+1} \\ \mathcal{P}_n
	%\end{bmatrix}
\end{align*}
\begin{align}\label{P_n+1 to P_0 R2 pp}
	\mathbb{P}_{n+1} &= (\mathbf{T}_n \ldots \mathbf{T}_0) \mathbb{P}_{0}, \qquad
	\mathbb{P}_{0} =
	\begin{bmatrix}
		\mathcal{P}_{0} & \mathcal{P}_{-1}	
	\end{bmatrix}^T.
\end{align}
Notice that $\mathcal{P}^{(k)}_{n-k}$ is a solution of the recurrence relation \eqref{R2} with initial conditions $\mathcal{P}^{(k)}_{-1}=0$ and $\mathcal{P}^{(k)}_{0}=1$. It is easy to verify that 
\begin{align*}
	\begin{bmatrix}
		\mathcal{P}_{n+1}	& \mathcal{P}^{(k)}_{n-k+1}\\
		\mathcal{P}_{n}	& \mathcal{P}^{(k)}_{n-k}
	\end{bmatrix} &=\mathbf{T}_n \begin{bmatrix}
		\mathcal{P}_{n}	& \mathcal{P}^{(k)}_{n-k}\\
		\mathcal{P}_{n-1}	& \mathcal{P}^{(k)}_{n-k-1}
	\end{bmatrix}.
\end{align*}
Hence,
\begin{align}\label{D of P_n P_{n-k} R2 pp}
	D(\mathcal{P}_{n},\mathcal{P}^{(k)}_{n-k})=\lambda_n (x^2+1)D(\mathcal{P}_{n-1},\mathcal{P}^{(k)}_{n-k-1}).
\end{align}
Let X denote the set of zeros of $\mathcal{P}_{k-1}$. From \eqref{D of P_n P_{n-k} R2 pp}, we get
\begin{align}\label{D_Pn R2 pp}
	D(\mathcal{P}_{n},\mathcal{P}^{(k)}_{n-k})&= \prod_{j=k}^{n}\lambda_j (x^2+1)^{n-k}\mathcal{P}_{k-1}.
\end{align}
which means that $\mathcal{P}_{n}$ and $\mathcal{P}^{(k)}_{n-k}$ are linearly independent in $\mathbb{R}\backslash X$. 
\begin{theorem}\label{Theorem s_k_x R2 pp}
	For $x \in \mathbb{R}\backslash X$, the following relations hold:
	\begin{align*}
		\mathcal{P}_{n}(x;\mu_k,\nu_k)&=\mathcal{P}_{n}(x), \qquad \mbox{$n \leq k$}, \\
		{\mathcal{P}}_{n}(x;\mu_k,\nu_k) &= \mathcal{P}_{n}(x) - \mathcal{S}_k(x)\mathcal{P}^{(k)}_{n-k}(x), \qquad \mbox{$n > k$},
	\end{align*}
	where $\mathcal{S}_k(x)=\mu_k \mathcal{P}_k(x)+(\nu_k-1)\lambda_k (x^2+1) \mathcal{P}_{k-1}(x)$.
\end{theorem}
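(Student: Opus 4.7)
The plan is a two-step argument exploiting the fact that the perturbation acts only at the single step $n=k$.

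For $n \leq k$, the recurrence defining $\mathcal{P}_n(x;\mu_k,\nu_k)$ is identical to \eqref{special R2} (the modified coefficients $c_k+\mu_k$ and $\nu_k\lambda_k$ only enter when computing $\mathcal{P}_{k+1}$). With identical initial data $\mathcal{P}_{-1}=0$, $\mathcal{P}_0=1$, a one-line induction on $n$ yields $\mathcal{P}_n(x;\mu_k,\nu_k) = \mathcal{P}_n(x)$ for $n \leq k$.

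For $n > k$, the perturbed polynomials satisfy the \emph{unperturbed} three-term recurrence, as do $\mathcal{P}_n(x)$ and $\mathcal{P}^{(k)}_{n-k}(x)$ (the latter by the preceding Remark). The Casoratti determinant identity \eqref{D_Pn R2 pp} shows these two sequences are linearly independent on $\mathbb{R} \setminus X$, hence span the solution space. I would therefore write
\[
\mathcal{P}_n(x;\mu_k,\nu_k) = A(x)\mathcal{P}_n(x) + B(x)\mathcal{P}^{(k)}_{n-k}(x), \qquad n \geq k,
\]
with coefficients $A,B$ independent of $n$, and determine them from two consecutive initial values. The first is $\mathcal{P}_k(x;\mu_k,\nu_k) = \mathcal{P}_k(x)$ (from the previous step). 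For the second, I would apply the single perturbed step at $n=k$, computing
\[
\mathcal{P}_{k+1}(x;\mu_k,\nu_k) = (x-c_k-\mu_k)\mathcal{P}_k(x) - \nu_k\lambda_k(x^2+1)\mathcal{P}_{k-1}(x) = \mathcal{P}_{k+1}(x) - \mathcal{S}_k(x),
\]
where the last equality is just the definition of $\mathcal{S}_k(x)$ together with the unperturbed recurrence $\mathcal{P}_{k+1} = (x-c_k)\mathcal{P}_k - \lambda_k(x^2+1)\mathcal{P}_{k-1}$. Substituting both values into the ansatz produces a $2\times 2$ linear system for $A(x), B(x)$; solving it should collapse to $A(x) \equiv 1$ and $B(x) = -\mathcal{S}_k(x)$, matching the claim. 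A straightforward induction on $n \geq k$, invoking the common three-term recurrence at each step, then propagates the identity to all $n > k$.

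The main obstacle I anticipate is the algebraic simplification in solving the $2\times 2$ system cleanly: the coefficients must reduce exactly to the pair $(1, -\mathcal{S}_k)$, and this hinges on careful bookkeeping of the associated polynomial values (in particular $\mathcal{P}^{(k)}_0 = 1$ and $\mathcal{P}^{(k)}_1 = x-c_k$) together with the explicit form of $\mathcal{S}_k(x) = \mu_k\mathcal{P}_k(x) + (\nu_k-1)\lambda_k(x^2+1)\mathcal{P}_{k-1}(x)$. Once the base cases at $n=k$ and $n=k+1$ are consistent with the claimed formula, every subsequent step follows automatically from the linearity of the common recurrence, so no genuinely new idea is needed beyond the initial matching.
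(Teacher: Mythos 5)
Your strategy is the same as the paper's: represent the perturbed sequence in the basis $\{\mathcal{P}_n,\mathcal{P}^{(k)}_{n-k}\}$ and fix $A,B$ by matching at $n=k$ and $n=k+1$. But the step you defer as ``algebraic bookkeeping'' is exactly where the argument fails: the $2\times 2$ system does \emph{not} collapse to $(A,B)=(1,-\mathcal{S}_k)$. At $n=k$ the ansatz reads $\mathcal{P}_k=A\mathcal{P}_k+B\,\mathcal{P}^{(k)}_0=A\mathcal{P}_k+B$, so $B=(1-A)\mathcal{P}_k$ and $A\equiv 1$ would force $B\equiv 0$. Substituting into the equation at $n=k+1$, where $\mathcal{P}^{(k)}_1=x-c_k$ and $\mathcal{P}_{k+1}(x;\mu_k,\nu_k)=\mathcal{P}_{k+1}-\mathcal{S}_k$, gives
\begin{align*}
(1-A)\bigl[\mathcal{P}_{k+1}-(x-c_k)\mathcal{P}_k\bigr]=\mathcal{S}_k,
\qquad\text{i.e.}\qquad
1-A=-\frac{\mathcal{S}_k}{\lambda_k(x^2+1)\mathcal{P}_{k-1}},
\end{align*}
so $A\not\equiv 1$ and $B=-\mathcal{S}_k\mathcal{P}_k/\bigl(\lambda_k(x^2+1)\mathcal{P}_{k-1}\bigr)$, not $-\mathcal{S}_k$. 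Equivalently, the claimed identity already fails at its first instance $n=k+1$: the right-hand side is $\mathcal{P}_{k+1}-\mathcal{S}_k\cdot(x-c_k)$ while the perturbed step produces $\mathcal{P}_{k+1}-\mathcal{S}_k$.

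The clean coefficients $(1,-\mathcal{S}_k)$ do emerge, but only with the associated polynomial of order $k+1$: the difference $\mathcal{P}_n(x;\mu_k,\nu_k)-\mathcal{P}_n(x)$ satisfies the unperturbed recurrence for $n\ge k+1$ and takes the values $0$ at $n=k$ and $-\mathcal{S}_k$ at $n=k+1$, hence equals $-\mathcal{S}_k\,\mathcal{P}^{(k+1)}_{n-k-1}(x)$, the unique solution with data $(0,1)$ at $(k,k+1)$ scaled by $-\mathcal{S}_k$; note this version needs no linear-independence hypothesis and holds for every $x$, not just on $\mathbb{R}\backslash X$. The paper's own proof makes the same unexecuted assertion, and its Example (perturbation at $k=0$, where all associated polynomials coincide with the $\mathcal{P}_n$ because the coefficients are constant) confirms the corrected index: there $\mathcal{P}_{n+1}(x;\mu)=\mathcal{P}_{n+1}(x)-\mu\mathcal{P}_{n}(x)=\mathcal{P}_{n+1}(x)-\mathcal{S}_0(x)\mathcal{P}^{(1)}_{n}(x)$, whereas the formula with $\mathcal{P}^{(0)}_{n+1}=\mathcal{P}_{n+1}$ would give $(1-\mu)\mathcal{P}_{n+1}(x)$. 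So the gap is concrete: carry out the $2\times 2$ solve rather than asserting its outcome, and you will find the second basis element must be re-indexed before the stated conclusion can be reached.
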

\begin{proof}
	It follows from the theory of difference equations that any solution of \eqref{special R2} will be linear combination of two linearly independent solutions and vice-versa, thus assuming $\mathcal{P}_{k-1}(x) \neq 0$, there exists functions $A(x)$ and $B(x)$, such that
	\begin{align*}
		{\mathcal{P}}_{n}(x;\mu_k,\nu_k) = A(x)\mathcal{P}_{n}(x)+B(x)\mathcal{P}^{(k)}_{n-k}(x).
	\end{align*}
	For $n=k$ and $n=k+1$, we have $A(x)=1$ and $B(x)=-\mu_k \mathcal{P}_k(x)+(1-\nu_k)\lambda_k (x^2+1) \mathcal{P}_{k-1}(x)$ which proves the theorem.
\end{proof}
%Above theorem is an easy consequence of relations between perturbed polynomials and unperturbed ones dicussed in \Cref{Representation of new polynomials R2 pp} and  \Cref{D_Pn R2 pp}. 
\begin{remark}
Observe that if $\mu_k=0$, degree of $\mathcal{S}_k(x)$ is $k+1$, also, if $\nu_k=1$, degree of $\mathcal{S}_k(x)$ is $k$. From here, we conclude that for $\mu_k \neq 0$ and $\nu_k \neq 1$, $\mathcal{S}_k(x)$ is a polynomial of degree $k+1$. Note that this is not the case with OPRL satisfying \eqref{OPRL TTRR R2}, see \cite[Theorem 2.1]{Castillo co-polynomials on real line 2015} {\rm(also see \cite{Paco perturbed recurrence 1990})}. Moreover, this result is different from the one obtained for $R_I$ polynomials in \cite[Theorem 3.1]{swami vinay 2021}.
\end{remark}

\begin{theorem}
If $\mathcal{P}_{n}(x;\mu_k,\nu_k)$ and $\mathcal{P}_{n}(x)$ have common zeros, then such zero will also be common to $\mathcal{S}_k(x)$.
	%The polynomials	$\mathcal{P}_{n+1}(.;\mu_k)$ and $\mathcal{P}_{n}(x)$ share at most the zeros of $\mathcal{S}_{k}$.
\end{theorem}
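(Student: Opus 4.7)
The plan is to invoke Theorem~\ref{Theorem s_k_x R2 pp} and evaluate at the common zero. The statement is vacuous for $n \leq k$ (where perturbed and unperturbed polynomials coincide), so I assume $n > k$. Theorem~\ref{Theorem s_k_x R2 pp} then gives
\[
\mathcal{P}_n(x) - \mathcal{P}_n(x;\mu_k,\nu_k) = \mathcal{S}_k(x)\,\mathcal{P}^{(k)}_{n-k}(x),
\]
which, although derived on $\mathbb{R}\setminus X$, holds as a polynomial identity on all of $\mathbb{C}$ by continuity. Substituting a common zero $x_0$ of $\mathcal{P}_n$ and $\mathcal{P}_n(\cdot;\mu_k,\nu_k)$ into this relation immediately yields
\[
\mathcal{S}_k(x_0)\,\mathcal{P}^{(k)}_{n-k}(x_0) = 0,
\]
so everything reduces to ruling out $\mathcal{P}^{(k)}_{n-k}(x_0)=0$.

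For this I would use the Casoratti determinant formula \eqref{D_Pn R2 pp},
\[
D\!\left(\mathcal{P}_{n},\mathcal{P}^{(k)}_{n-k}\right)(x) = \prod_{j=k}^{n}\lambda_j\,(x^2+1)^{n-k}\,\mathcal{P}_{k-1}(x).
\]
If $\mathcal{P}_n(x_0)=0$ and $\mathcal{P}^{(k)}_{n-k}(x_0)=0$ simultaneously, the definition of the Casoratti gives $D(x_0)=0$. Since the chain sequence entries $\lambda_j$ are positive and $x_0^2+1>0$ for real $x_0$, this forces $\mathcal{P}_{k-1}(x_0)=0$, i.e., $x_0 \in X$. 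Thus outside the exceptional set $X$ we conclude at once that $\mathcal{P}^{(k)}_{n-k}(x_0)\neq 0$, and hence $\mathcal{S}_k(x_0)=0$.

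The remaining work concerns the case $x_0 \in X$, and I would split it further. If $\mathcal{P}_k(x_0)=0$ as well, then a straightforward induction on the $R_{II}$ recurrence shows $\mathcal{P}_n(x_0)=0$ for every $n$, and the defining formula $\mathcal{S}_k(x) = \mu_k\mathcal{P}_k(x) + (\nu_k-1)\lambda_k(x^2+1)\mathcal{P}_{k-1}(x)$ evaluated at $x_0$ vanishes immediately. If instead $\mathcal{P}_k(x_0)\neq 0$, then I would compare the two solutions $\{\mathcal{P}_n(x_0)\}$ and $\{\mathcal{P}_n(x_0;\mu_k,\nu_k)\}$ of the (identical, for $n\geq k+1$) unperturbed recurrence: their difference $Y_n$ satisfies the same recurrence with initial values $Y_k=0$, $Y_{k+1}=\mathcal{S}_k(x_0)=\mu_k \mathcal{P}_k(x_0)$. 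A second Casoratti computation, now between $\{\mathcal{P}_n(x_0)\}$ and $\{Y_n\}$, propagates by the same factor $\lambda_n(x_0^2+1)>0$ and is seen to be non-zero at level $k$; thus the two cannot vanish simultaneously at any $n>k$ unless $\mu_k=0$, in which case $\mathcal{S}_k(x_0)=0$ directly.

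The main obstacle is precisely this bookkeeping around the exceptional locus $X=\{\mathcal{P}_{k-1}=0\}$: the generic argument via Theorem~\ref{Theorem s_k_x R2 pp} plus \eqref{D_Pn R2 pp} is essentially automatic, but one has to make sure the conclusion survives on $X$ itself, where the linear-independence condition of the two fundamental solutions of \eqref{special R2} degenerates. Once the sub-case analysis above is carried out, $\mathcal{S}_k(x_0)=0$ follows in every scenario.
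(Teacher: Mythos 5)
Your proposal is correct, and its core is the same as the paper's: evaluate the decomposition of Theorem~\ref{Theorem s_k_x R2 pp} at the common zero and rule out $\mathcal{P}^{(k)}_{n-k}(x_0)=0$ by the linear independence of $\mathcal{P}_n$ and $\mathcal{P}^{(k)}_{n-k}$, which is exactly what the Casoratti identity \eqref{D_Pn R2 pp} encodes (the paper phrases this as a contradiction: if $\mathcal{S}_k(\beta)\neq 0$ then $\mathcal{P}^{(k)}_{n-k}(\beta)=0$, ``a violation to linear independence''). Where you genuinely go beyond the paper is on the exceptional locus $X=\{\mathcal{P}_{k-1}=0\}$: the paper's proof simply asserts that the common zero $\beta$ lies in $\mathbb{R}\setminus(X\cup Y)$ without justifying $\beta\notin X$, whereas you isolate this case and dispose of it by the two sub-cases ($\mathcal{P}_k(x_0)=0$, where $\mathcal{S}_k(x_0)$ vanishes termwise, and $\mathcal{P}_k(x_0)\neq 0$, where a second Casoratti computation between $\{\mathcal{P}_n(x_0)\}$ and the difference sequence $Y_n$ with $Y_k=0$, $Y_{k+1}=\mu_k\mathcal{P}_k(x_0)$ forces either a contradiction or $\mu_k=0$, hence $\mathcal{S}_k(x_0)=0$). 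Both sub-cases check out (the $\lambda_j>0$ and $x_0^2+1>0$ positivity you use is available since the zeros of $\mathcal{P}_n$ are real by Theorem~\ref{Ranga zeros interlacing R2}), and your extension of the identity from $\mathbb{R}\setminus X$ to all of $\mathbb{C}$ as a polynomial identity is legitimate. In short: same mechanism, but your version actually closes a small gap that the published argument leaves open at the points where $\mathcal{P}_{k-1}$ vanishes.
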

\begin{proof}
Suppose $\beta$ is a common zero of $\mathcal{P}_{n}(x;\mu_k, \nu_k)$ and $\mathcal{P}_{n}(x)$ such that  $\mathcal{S}_{k}(\beta) \neq 0$. Let $Y:=\{y \in \mathbb{R} : \mathcal{S}_k(y)=0 \} $. Then, since $\beta \in \mathbb{R}\backslash (X \cup Y)$, Theorem \ref{Theorem s_k_x R2 pp} implies $\mathcal{P}^{(k)}_{n-k}(\beta)=0$, a violation to linear independence of $\mathcal{P}_{n}(x)$ and $\mathcal{P}^{(k)}_{n-k}(x)$.
\end{proof}

\begin{corollary}\label{corollary zeros sharing R2 pp}
The co-recursive polynomials $\mathcal{P}_{n}(x;\mu_k)$ and $\mathcal{P}_{n}(x)$ have $k$ zeros in common which are same as the zeros of $\mathcal{P}_{k}(x)$.
\end{corollary}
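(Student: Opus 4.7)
The plan is to specialize \Cref{Theorem s_k_x R2 pp} and the preceding ``common-zero'' theorem to the co-recursive situation, i.e.\ to the case where only $\mu_k$ acts and $\nu_k=1$. Under this restriction the auxiliary polynomial collapses to
\[
\mathcal{S}_k(x)=\mu_k\,\mathcal{P}_k(x)+(\nu_k-1)\lambda_k(x^2+1)\mathcal{P}_{k-1}(x)=\mu_k\,\mathcal{P}_k(x),
\]
which (since a genuine co-recursive perturbation requires $\mu_k\neq 0$) is a polynomial of degree exactly $k$ whose zero set coincides with that of $\mathcal{P}_k(x)$. This is the only step that uses the specific form of the perturbation; everything else is structural.

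Next I would invoke the preceding theorem on common zeros: if $\beta$ is a common zero of $\mathcal{P}_n(x;\mu_k)$ and $\mathcal{P}_n(x)$, then necessarily $\mathcal{S}_k(\beta)=0$, hence $\mathcal{P}_k(\beta)=0$. This immediately pins down the common zeros as a subset of the $k$ roots of $\mathcal{P}_k(x)$, giving the upper bound ``at most $k$'' candidates and identifying them exactly as the zeros of $\mathcal{P}_k(x)$.

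For the converse direction I would appeal to the explicit representation in \Cref{Theorem s_k_x R2 pp}: for $n>k$,
\[
\mathcal{P}_n(x;\mu_k)-\mathcal{P}_n(x)=-\mu_k\,\mathcal{P}_k(x)\,\mathcal{P}^{(k)}_{n-k}(x),
\]
so evaluating at any root $\beta$ of $\mathcal{P}_k$ yields $\mathcal{P}_n(\beta;\mu_k)=\mathcal{P}_n(\beta)$. Thus the two polynomials take the same value at each of the $k$ roots of $\mathcal{P}_k(x)$, which is precisely the sharing property the corollary asserts.

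The main interpretive obstacle, rather than a computational one, lies in reading ``have $k$ zeros in common'': two polynomials agreeing on a $k$-point set need not simultaneously vanish there, so the cleanest reading consistent with the preceding theorem is that the common-zero locus of $\mathcal{P}_n(x;\mu_k)$ and $\mathcal{P}_n(x)$ is forced to lie inside the zero set of $\mathcal{P}_k(x)$, while the representation formula shows that the two co-recursive partners coincide identically at those $k$ points. No deeper machinery (chain sequences, transfer matrix, linear pencil) is needed; the corollary is genuinely a one-line consequence of the two preceding results once the co-recursive specialization of $\mathcal{S}_k$ is carried out.
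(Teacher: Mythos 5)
Your argument is the intended one: the paper states this corollary without proof, and it is meant to be read off from the preceding common-zero theorem together with \Cref{Theorem s_k_x R2 pp} exactly as you do --- set $\nu_k=1$ so that $\mathcal{S}_k(x)=\mu_k\mathcal{P}_k(x)$, conclude that every common zero of $\mathcal{P}_n(x;\mu_k)$ and $\mathcal{P}_n(x)$ is a zero of $\mathcal{P}_k(x)$, and use the representation $\mathcal{P}_n(x;\mu_k)=\mathcal{P}_n(x)-\mu_k\mathcal{P}_k(x)\mathcal{P}^{(k)}_{n-k}(x)$ for the reverse inclusion.

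The interpretive obstacle you flag is genuine and you are right not to paper over it. Evaluating the representation at a zero $\beta$ of $\mathcal{P}_k$ yields only $\mathcal{P}_n(\beta;\mu_k)=\mathcal{P}_n(\beta)$, so $\beta$ is a \emph{common zero} precisely when it is already a zero of $\mathcal{P}_n$. What your argument proves --- and what is actually true --- is that the common zeros of $\mathcal{P}_n(x;\mu_k)$ and $\mathcal{P}_n(x)$ are exactly those zeros of $\mathcal{P}_k(x)$ that are also zeros of $\mathcal{P}_n(x)$; the full count of $k$ is attained only when the zero set of $\mathcal{P}_k$ is contained in that of $\mathcal{P}_n$. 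The paper's own numerics confirm this: for the recurrence \eqref{Special R2 with c_n=0} the zeros of $\mathcal{P}_m$ are $-\cot\bigl(j\pi/(m+1)\bigr)$, so the zeros of $\mathcal{P}_4$ (resp.\ $\mathcal{P}_3$) lie among those of $\mathcal{P}_9$ (resp.\ $\mathcal{P}_7$) because $5\mid 10$ (resp.\ $4\mid 8$), which is why \Cref{T1 R2} and \Cref{T2 R2} exhibit exactly $k$ common zeros; yet for the CRR polynomials the paper records that $\mathcal{P}_6(x)$ and $\mathcal{P}_6(x;-0.3)$, a co-recursive perturbation at level $k=3$, share no zeros at all (\Cref{T3 R2}), contradicting the corollary as literally stated. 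So your proof is as strong as the statement permits, and your weaker reformulation is the correct one; no further idea is missing from your argument --- the defect is in the statement, not in your proof.
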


The recurrence relations for the generalised co-recursive $R_{II}$ polynomials $\mathcal{P}_{n+1}(z;\mu_k)$ and for the generalised co-dilated $R_{II}$ polynomials $\mathcal{P}_{n+1}(z;\nu_k)$ and the representation of new perturbed polynomials in terms of unperturbed ones can be obtained by substituting $\nu_k=1$ and $\mu_k=0$ respectively in \Cref{Theorem s_k_x R2 pp}.
\begin{remark} 
Polynomials obtained after perturbations in \eqref{OPRL TTRR R2}, in \eqref{Szego recurrence R2} and in \eqref{R1 R2 pp}  have been called co-polynomials on real line (COPRL) \cite{Castillo co-polynomials on real line 2015}, co-polynomials on unit circle (COPUC) \cite{Castillo perturbed szego 2014} and co--polynomials of $R_{I}$ type respectively \cite{swami vinay 2021}. Following an analogous nomenclature, we may call perturbed polynomials introduced \Cref{Co-polynomials of R2 type} as \textbf{co-polynomials of $R_{II}$ type}.
\end{remark}

\begin{example}\label{example 1 R2 paper}
In the recurrence relation \eqref{R2}, following \cite{Bracciali Pereira ranga 2020}, allow the parameters to be constant sequences, e.g. let $c_n=0$, $\lambda_{n}=\dfrac{1}{4}$, $a_n=-i$ and $b_n=i$. We have
\begin{align}
&\mathcal{P}_{n+1}(x) = x\mathcal{P}_n(x)-\dfrac{1}{4} (x^2+1)\mathcal{P}_{n-1}(x), \quad n \geq 1, \label{Example 1 R2 recurrence} \\
&	\mathcal{P}_{0}(x) = 1, \qquad \mathcal{P}_1(x) = x. \nonumber
\end{align}
\end{example}
It can be verified that
\begin{align*}
\mathcal{P}_{n}(x)= i\left(\dfrac{x-i}{2}\right)^{n+1}-i\left(\dfrac{x+i}{2}\right)^{n+1}, \quad n \geq 0.
\end{align*}

Assuming $\mu$ to be positive, if we make a perturbation at the beginning of the sequence
$\mathcal{P}_1(x) = x-\mu$. Consequently, we have
\begin{align*}
	\mathcal{P}_{n+1}(x;\mu) = \mathcal{P}_{n+1}(x)-\mu\mathcal{P}_{n}(x).
\end{align*}
This is the case for perturbation at $k=0$ level. Therefore, $\mathcal{P}_{0}(x)=1$ and $\mathcal{P}^{(0)}_{n}(x)=\mathcal{P}_{n}(x)$ along with \Cref{Theorem s_k_x R2 pp} yields the above relation. If $\mu = 1$, then
\begin{align}
\mathcal{P}_{n+1}(x;1) &= \mathcal{P}_{n+1}(x)-\mathcal{P}_{n}(x) \nonumber \\
&= i(x-2-i)\left(\dfrac{x-i}{2}\right)^{n+1}-i(x-2+i)\left(\dfrac{x+i}{2}\right)^{n+1}.
\end{align} 
The polynomial $\mathcal{P}_{n+1}(x;1)$ satisfies \eqref{Example 1 R2 recurrence} with perturbed initial conditions $\mathcal{P}_{0}(x;1) = 1$, $\mathcal{P}_1(x;1) = x-1$, and that verifies our results in this section.
	
\section{Connection with unit circle: Complementary Chain sequences and Co-dilation}\label{Connection with unit circle}
%\begin{definition}\cite{Chihara book 1978}
%$\{\lambda_{n+1}\}_{n \geq 1}$ is a positive chain sequence if there exists another sequence $\{m_n\}_{n \geq 1}$ such that $0 \leq m_1 <1$, $ 0 <m_{n+1} < 1$ and $(1-m_{n})m_{n+1} = \lambda_{n+1}$ for $n\geq 1$. The sequence $\{m_n\}_{n \geq 1}$ is called a parameter sequence of the positive chain sequence $\{\lambda_{n+1}\}_{n \geq 1}$. A positive chain sequence can have multiple (infinitely many) parameter sequences but it always has a minimal parameter sequence. Denote by $\{l_n\}_{n \geq 1}$, the minimal parameter sequence of the positive chain sequence $\{\lambda_{n+1}\}_{n \geq 1}$ which is given by $l_1 = 0$, $ 0 <l_{n+1} < 1$ and $(1-l_{n})l_{n+1} = \lambda_n$ for $n\geq 1$. 
%\end{definition}
\subsection{Co-polynomials and consequences on unit circle}\label{Co-polynomials and consequences on unit circle}
Following \cite{Esmail Ranga 2018}, consider a special form of $R_{II}$ recurrence 
\begin{align}\label{special R2 shifted}
	&\mathcal{P}_{n+1}(x) = (x-c_{n+1})\mathcal{P}_n(x)-\lambda_{n+1} (x^2+1)\mathcal{P}_{n-1}(x), \quad n\geq 1, \\    
	& \nonumber	\mathcal{P}_{0}(x) = 1, \qquad \mathcal{P}_1(x) = x-c_1,
\end{align}
where $\{c_n\}_{n \geq 1}$ is a real sequence and $\{\lambda_{n+1}\}_{n \geq 1}$ is a positive chain sequence. In a recent work, such recurrence relation is shown to be related to a generalized eigenvalue problem whose eigenvalues are zeros of $\mathcal{P}_n(x)$ \cite[Theorem 1.1]{Esmail Ranga 2018}. The transformation
\begin{align*}
	\xi(x) =\dfrac{x+i}{x-i} ,
\end{align*}
maps real line onto the unit circle with a slit $\mathbb{T}= \{\xi=e^{i\theta}, 0<\theta<2\pi\}$. The inverse of this transformation is $x(\xi)= i\dfrac{\xi+1}{\xi-1}$. With this transform, it was pointed out in \cite{Esmail Ranga 2018} that $R_{II}$ polynomials are related to sequence of polynomials $\{r_n\}$ given by 
\begin{align}\label{r_n to P_n}
	r_n(\xi)= \dfrac{2^n\mathcal{P}_n(x)}{(x-i)^n}, \quad n \geq 1,
\end{align}
satisfying 
\begin{align*}
	r_{n+1}(\xi) = ((1+ic_{n+1})\xi+(1-ic_{n+1}))r_{n}(\xi)-4\lambda_{n+1}\xi r_{n-1}(\xi), \quad n \geq 1,
\end{align*}
that turn out to be related to OPUC via the relation
\begin{align}\label{phi_n to r_n}
	\phi_{n-1}(\xi)= \dfrac{r_n(\xi)-2(1-l_n)r_{n-1}(\xi)}{(z-1)\prod_{j=1}^{n}(1+ic_j)}, \quad n \geq 1,
\end{align}
or equivalently,
\begin{align}\label{phi_n to P_n}
	\phi_{n-1}(\xi)= \dfrac{-i2^{n-1}}{\prod_{j=1}^{n}(1+ic_j)}\dfrac{1}{(x-i)^{n-1}}[\mathcal{P}_n(x)-(1-l_n)(x-i)\mathcal{P}_{n-1}(x)], \quad n \geq 1,
\end{align}
by the modification of the corresponding measure of orthogonality \cite[Theorem 1.2]{Esmail Ranga 2018}.
Paraorthogonal polynomials on the unit circle are given by $\rho_n(z)=z\phi_{n-1}(z)- \tau_n\phi^*_{n-1}(z)$ where $|\tau_n|=1$ and $\phi^*_{n}(z)=z^n\overline{\phi_n(\bar{z})}$ and their zeros lie on $\partial\mathbb{D}$ \cite{Esmail book, Simon part1 2005}. The sequence of $R_{II}$ polynomials $\{\mathcal{P}_n(x)\}_{n \geq 0}$ satisfying \eqref{special R2 shifted} are shown to be related to a certain sequence $\{z\phi_{n-1}(z)-\tau_n\phi^*_{n-1}(z)\}$ of para-orthogonal
polynomials on the unit circle \cite{Esmail Ranga 2018}.

\begin{theorem}\cite[Theorem 3.2]{Esmail Ranga 2018}\label{Esmail ranga R2 pp theorem 3.2}
	Consider the recurrence relation \eqref{special R2 shifted} where $\{\lambda_{n+1}\}_{n \geq 1}$ is a positive chain sequence having minimal parameter sequence $\{\l_{n+1}\}_{n \geq 0}$ and let $\mu$ be the positive measure on the unit circle such that its Verblunsky coefficients are
	\begin{align}\label{alpha_n and tau_n}
		\alpha_{n-1}=-\dfrac{1}{\tau_n}\dfrac{1-2l_{n+1}-ic_{n+1}}{1-ic_{n+1}}, \qquad \tau_{n}=\tau_{n-1} \dfrac{1-ic_n}{1+ic_n}, \quad n\geq 1,
	\end{align}
	where $\tau_0=1$.
\end{theorem}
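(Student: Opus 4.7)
The plan is to transfer the real-line recurrence \eqref{special R2 shifted} to the unit circle via the Möbius map $\xi = (x+i)/(x-i)$, show directly that the sequence defined by \eqref{phi_n to r_n} satisfies the Szegő recurrence \eqref{Szego recurrence R2}, read off the Verblunsky coefficients, and invoke Verblunsky's theorem to produce the measure.

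First, I would perform the change of variable. Multiplying \eqref{special R2 shifted} by $2^{n+1}/(x-i)^{n+1}$ and using $x = i(\xi+1)/(\xi-1)$ together with the identities
\begin{align*}
\frac{2(x-c_{n+1})}{x-i} = (1+ic_{n+1})\xi + (1-ic_{n+1}), \qquad \frac{4(x^2+1)}{(x-i)^2} = 4\xi,
\end{align*}
I obtain the three-term recurrence for $r_n(\xi) = 2^n\mathcal{P}_n(x)/(x-i)^n$ displayed just after \eqref{r_n to P_n}.

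Second, I would take \eqref{phi_n to r_n} as a \emph{definition} of $\phi_{n-1}(\xi)$ and verify that the Szegő recurrence holds. Concretely, I would substitute the recurrence for $r_{n+1}$ into $(\xi-1)\prod_{j=1}^{n+1}(1+ic_j)\,\phi_n(\xi) = r_{n+1}(\xi) - 2(1-l_{n+1})r_n(\xi)$, regroup the resulting expression into multiples of $r_n$ and $r_{n-1}$, and compare with $\xi\phi_{n-1} - \overline{\alpha_{n-1}}\phi_{n-1}^*$. The chain sequence identity $(1-l_{n+1})l_{n+2} = \lambda_{n+2}$ is the key algebraic ingredient that makes the coefficient of $r_{n-1}$ collapse into a pure multiple of $\phi_{n-1}^*$. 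Reading the reversed polynomial $\phi_{n-1}^*$ requires the corresponding expression in terms of $r_n$ and $r_{n-1}$ obtained from \eqref{phi_n to r_n} by conjugation; this is where the unimodular factor $\tau_n$ enters, and forcing $\phi_{n-1}^*$ to be a genuine reversed polynomial (same leading coefficient, conjugated in the appropriate sense) determines $\tau_n = \tau_{n-1}(1-ic_n)/(1+ic_n)$.

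Third, I would extract $\alpha_{n-1}$ by evaluating at $\xi = 0$. Since $\phi_n(0) = -\overline{\alpha_{n-1}}$ and $r_n(0) = r_n(\xi)|_{\xi=0}$ can be computed from \eqref{r_n to P_n} at $x = -i$ (giving $r_n(0) = \prod_{j=1}^{n}(1-ic_j)\cdot(-1)^n \cdot 2^n\mathcal{P}_n(-i)/((-i-i)^n)$), a direct evaluation of \eqref{phi_n to r_n} at $\xi=0$ produces exactly the formula \eqref{alpha_n and tau_n}. The bound $|\alpha_{n-1}|<1$ then follows from $|\alpha_{n-1}|^2 = \bigl((1-2l_{n+1})^2 + c_{n+1}^2\bigr)/(1+c_{n+1}^2) < 1$, which holds precisely because $0 < l_{n+1} < 1$ for $n\geq 1$ in a minimal parameter sequence.

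Finally, with $\{\alpha_{n-1}\}\subset\mathbb{D}$ in hand, Verblunsky's theorem produces a unique probability measure $\mu$ on the unit circle whose monic orthogonal polynomials are the $\phi_n$'s just constructed, finishing the proof. The main obstacle I expect is the computational verification in the second step: carefully matching the Szegő recurrence against the $r_n$-recurrence while keeping track of the $\tau_n$ factors and using the chain-sequence relation at the right moment. Everything else is either a routine Möbius computation or a direct application of Verblunsky's theorem.
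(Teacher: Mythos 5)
The paper gives no proof of this statement—it is imported verbatim from \cite[Theorem 3.2]{Esmail Ranga 2018}—and your proposal correctly reconstructs the argument of that source along essentially the same lines: the M\"obius transfer to the $r_n$-recurrence, the self-inversiveness of $r_n$ (which yields $\tau_n\phi_{n-1}^{*}=-\bigl(r_n-2(1-l_n)\xi r_{n-1}\bigr)/\bigl((\xi-1)\prod_{j=1}^{n}(1+ic_j)\bigr)$ and hence $r_n=\prod_{j=1}^{n}(1+ic_j)\bigl(\xi\phi_{n-1}+\tau_n\phi_{n-1}^{*}\bigr)$), and the chain-sequence identity $(1-l_n)l_{n+1}=\lambda_{n+1}$ are exactly the ingredients needed, and the regrouping closes to $\phi_n=\xi\phi_{n-1}-\overline{\alpha_{n-1}}\phi_{n-1}^{*}$ with the stated $\alpha_{n-1}$. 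The only points worth adding are the base-case check $\phi_0=1$ (which uses $l_1=0$) and that $\tau_n$ is not ``forced'' but falls out directly when one conjugates and reverses \eqref{phi_n to r_n}; otherwise the plan is sound.
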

The effect on the OPUC when the recurrence coefficients of OPRL \eqref{OPRL TTRR R2} are perturbed has been studied in \cite{Castillo co-polynomials on real line 2015, Castillo perturbed polynomials via szego transform 2017}. The reverse situation, i.e. effect on OPRL by modifying the Verblunsky coefficients has been the matter of research in \cite{Castillo perturbed szego 2014}. In the same, some connection formulas for recurrence coefficients have also been derived explicitly. On the same lines, the next theorem gives relation among Verblunsky coefficients of the perturbed OPUC and POPUC, recurrence coefficients of \eqref{special R2 shifted}, \eqref{Special R2 perturbed} and Verblunsky coefficients of OPUC and POPUC associated with $R_{II}$ type recurrence.
\begin{theorem}\label{gamma_n to alpha_n}
	Let $\{\gamma_n \}_{n \geq 1}$ and $\{\eta_n\}_{n \geq 1}$ be the sequence of Verblunsky coefficients of the corresponding COPUC and Co-POPUC associated to 
	\begin{align}
		&\mathcal{P}_{n+1}(x) = (x-a_{n+1})\mathcal{P}_n(x)-b_{n+1} (x^2+1)\mathcal{P}_{n-1}(x), \quad n\geq 0, \label{Special R2 perturbed}\\
		a_{n+1} = &\begin{cases}
			c_{n+1}, & n \neq k \\
			a_{k+1}, & n =k
		\end{cases}, \qquad b_{n+1} = \begin{cases}
			\lambda_{n+1}, & n \neq k \\
			b_{k+1}, & n =k
		\end{cases} , \label{new a_n and b_n}
	\end{align}
	Then for $n \geq k$,
	\begin{align}\label{gamma_n-1 formula in theorem}
& \gamma_{n-1}=\dfrac{1-ia_{n}}{1+ia_{n}}\dfrac{1+ic_{n}}{1-ic_{n}}\dfrac{1-ic_{n+1}}{1-ia_{n+1}} \left[\alpha_{n-1} -\dfrac{1}{\tau_n} \left \{\dfrac{2(l_{n+1}-l'_{n+1})+i(c_{n+1}-a_{n+1})}{1-ic_{n+1}}\right \} \right], 
	\end{align}
	where $\{l'_{n+1}\}_{n \geq 0}$ is the minimal parameter sequence of $\{b_{n+1}\}_{n \geq 1}$ and 
\begin{align}
\begin{split}\label{eta_n cases}
\eta_n = \tau_n,~ n\leq k, \quad &\eta_{k+1}=\dfrac{1-ia_{k+1}}{1+ia_{k+1}}\dfrac{1+ic_{k+1}}{1-ic_{k+1}}\tau_{k+1},~ n=k+1, \\ &\eta_{n}=\eta_{n-1} \dfrac{1-ic_n}{1+ic_n},~ n>k+1.
\end{split}
\end{align} 

	%=(1-M'_n)M'_{n+1}$
\end{theorem}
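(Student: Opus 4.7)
The proof reduces to two applications of Theorem \ref{Esmail ranga R2 pp theorem 3.2}: one to the unperturbed recurrence \eqref{special R2 shifted}, producing the pair $(\alpha_{n-1},\tau_n)$ attached to the minimal parameter sequence $\{l_{n+1}\}_{n\geq 0}$, and one to the perturbed recurrence \eqref{Special R2 perturbed}, producing the pair $(\gamma_{n-1},\eta_n)$ attached to $\{l'_{n+1}\}_{n\geq 0}$. The claimed identities are then purely algebraic consequences of eliminating $l_{n+1}$ and $l'_{n+1}$ between the two pairs of formulas.

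The first step is to establish \eqref{eta_n cases}. By Theorem \ref{Esmail ranga R2 pp theorem 3.2} applied to \eqref{Special R2 perturbed}, $\eta_n$ satisfies the two-term recursion $\eta_n = \eta_{n-1}(1-ia_n)/(1+ia_n)$ with $\eta_0=1$. Since the perturbation \eqref{new a_n and b_n} leaves $a_n = c_n$ for every $n \neq k+1$, a direct induction on $n$ gives three regimes: $\eta_n = \tau_n$ for $n \leq k$; at $n = k+1$ a single perturbed factor $(1-ia_{k+1})/(1+ia_{k+1})$ replaces the corresponding unperturbed factor, and comparing with the analogous expression for $\tau_{k+1}$ yields the stated closed form; for $n > k+1$ the recursion runs again with the unperturbed $c_n$'s, giving the last line of \eqref{eta_n cases}.

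For the main relation \eqref{gamma_n-1 formula in theorem}, I would start from the two expressions
\begin{align*}
\tau_n\alpha_{n-1}(1-ic_{n+1}) &= -(1-2l_{n+1}-ic_{n+1}), & \eta_n\gamma_{n-1}(1-ia_{n+1}) &= -(1-2l'_{n+1}-ia_{n+1}),
\end{align*}
obtained by clearing the denominator in Theorem \ref{Esmail ranga R2 pp theorem 3.2} applied to both systems. Subtracting these identities eliminates the additive $1$ and isolates the quantity $2(l_{n+1}-l'_{n+1}) + i(c_{n+1}-a_{n+1})$ on one side. Solving the resulting linear equation for $\gamma_{n-1}$ and grouping the prefactor $(\tau_n/\eta_n)\,(1-ic_{n+1})/(1-ia_{n+1})$ on the right-hand side yields precisely the shape of \eqref{gamma_n-1 formula in theorem}, and the leading ratio $\tau_n/\eta_n$ is then replaced by its explicit telescoping value from step one.

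The main bookkeeping obstacle will be tracking the divergence of the two minimal parameter sequences: although the input sequences $\{c_m,\lambda_m\}$ and $\{a_m,b_m\}$ differ only at index $m=k+1$, the defining identity $(1-l_{m-1})l_m = \lambda_m$ propagates this discrepancy forward, so $l_m \neq l'_m$ for every $m\geq k+1$, and this drift survives as the correction term in the formula. A secondary delicate point is to verify that the ratio $\tau_n/\eta_n$ produced by algebra coincides, term by term in $n$, with the fractional factor involving $a_n, c_n$ that appears on the right-hand side of \eqref{gamma_n-1 formula in theorem}; this is a matter of carefully using $a_m = c_m$ for $m \neq k+1$ inside the telescoping products that define $\tau_n$ and $\eta_n$, and then splitting into the cases $n=k$, $n=k+1$, $n>k+1$ consistent with \eqref{eta_n cases}.
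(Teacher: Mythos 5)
Your proposal follows essentially the same route as the paper: apply Theorem \ref{Esmail ranga R2 pp theorem 3.2} to the perturbed recurrence \eqref{Special R2 perturbed} to get $\gamma_{n-1}$ and the recursion for $\eta_n$, read off \eqref{eta_n cases} by comparing the telescoping products for $\tau_n$ and $\eta_n$ (using $a_m=c_m$ for $m\neq k+1$), and then obtain \eqref{gamma_n-1 formula in theorem} by elementary algebraic elimination of $l_{n+1}$ and $l'_{n+1}$. Your version is in fact more explicit than the paper's (which compresses the final step into ``after some elementary computations''), and your observation that the natural prefactor emerging from the algebra is $(\tau_n/\eta_n)(1-ic_{n+1})/(1-ia_{n+1})$, which must then be matched case by case against the stated factor, is exactly the verification the paper leaves implicit.
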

\begin{proof}
With $\{l'_{n+1}\}_{n \geq 0}$ as the minimal parameter sequence of the positive chain sequence $\{b_{n+1}\}_{n \geq 1}$ and let $\{\gamma_n \}_{n \geq 1}$ be the Verblunsky coefficients, then corresponding to \eqref{Special R2 perturbed} from \Cref{Esmail ranga R2 pp theorem 3.2}, we have
\begin{align}\label{gamma_n-1 eta_n in proof}
\gamma_{n-1}= -\dfrac{1}{\eta_n}\dfrac{1-2l'_{n+1}-ia_{n+1}}{1-ia_{n+1}}, \qquad \eta_{n}=\eta_{n-1} \dfrac{1-ia_n}{1+ia_n}, \quad n\geq 1.	
\end{align}
Now, \eqref{eta_n cases} is straight forward from expression for $\tau_n$ in \eqref{alpha_n and tau_n} and $\eta_n$ in \eqref{gamma_n-1 eta_n in proof}. Now, using \eqref{eta_n cases} and expression for $\alpha_{n-1}$ in \eqref{alpha_n and tau_n} in $\gamma_{n-1}$ defined above gives \eqref{gamma_n-1 formula in theorem} after some elementary computations.
%Expression for $\alpha_{n-1}$ and $\tau_n$ in \eqref{alpha_n and tau_n} with  gives the result. 
\end{proof}

\begin{corollary}\label{CCS corollary}
If \eqref{Special R2 perturbed} is considered with co-dilation only and further with $c_{n+1}=a_{n+1}=0$, $\forall~ n$. Then \Cref{gamma_n to alpha_n} implies
\begin{align*}
\gamma_{n-1}= \alpha_{n-1}-\hat{\alpha}_{n-1},~ \mbox{where} ~\hat{\alpha}_{n-1} =2(l_{n+1}-l'_{n+1}).	
\end{align*}
The term $\hat{\alpha}_{n-1}$ quantifies the change in the Verblunsky coefficients caused by co-dilation.
\end{corollary}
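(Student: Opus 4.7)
The plan is to recognize this as an immediate specialization of Theorem \ref{gamma_n to alpha_n}. The phrase \emph{co-dilation only} means that no co-recursive perturbation is applied, i.e.\ $\mu_k=0$ in \eqref{co-recursive condition R2}, so from \eqref{new a_n and b_n} we have $a_{n+1}=c_{n+1}$ for every $n$ (including $n=k$). Combined with the extra hypothesis $c_{n+1}=a_{n+1}=0$ for every $n\geq 0$, the setting reduces to a pair of chain sequences: the original $\{\lambda_{n+1}\}$ with minimal parameter sequence $\{l_{n+1}\}$ and its co-dilated companion $\{b_{n+1}\}$ with minimal parameter sequence $\{l'_{n+1}\}$, all $c$- and $a$-coefficients being uniformly zero.

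First I would compute $\tau_n$ under this simplification. The recursion $\tau_n=\tau_{n-1}(1-ic_n)/(1+ic_n)$ from \eqref{alpha_n and tau_n} together with $c_n=0$ and $\tau_0=1$ gives $\tau_n=1$ for every $n$. Next, substituting $c_n=c_{n+1}=a_n=a_{n+1}=0$ into the three prefactors of \eqref{gamma_n-1 formula in theorem} collapses each of the quotients $(1-ia_n)/(1+ia_n)$, $(1+ic_n)/(1-ic_n)$, and $(1-ic_{n+1})/(1-ia_{n+1})$ to $1$. Inside the bracket the term $i(c_{n+1}-a_{n+1})$ vanishes and $1-ic_{n+1}=1$, so the bracket reduces to $\alpha_{n-1}-2(l_{n+1}-l'_{n+1})$. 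This gives $\gamma_{n-1}=\alpha_{n-1}-2(l_{n+1}-l'_{n+1})=\alpha_{n-1}-\hat{\alpha}_{n-1}$, exactly as claimed.

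Since the argument is a straightforward chain of substitutions, no genuine obstacle is encountered. The one point worth flagging is a tacit well-definedness check: the co-dilated sequence $\{b_{n+1}\}$ must itself be a positive chain sequence, so that its minimal parameter sequence $\{l'_{n+1}\}$ is available; this is part of the framing of \eqref{Special R2 perturbed} and is therefore inherited from the hypotheses of Theorem \ref{gamma_n to alpha_n}. The resulting identity $\alpha_{n-1}-\gamma_{n-1}=\hat{\alpha}_{n-1}$ supplies the clean interpretation stated at the end of the corollary: under this highly symmetric perturbation the shift in the Verblunsky coefficient is measured precisely by twice the shift in the $(n+1)$-st minimal parameter of the underlying chain sequence.
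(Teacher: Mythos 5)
Your proposal is correct and follows exactly the route the paper intends: the corollary is stated as an immediate specialization of Theorem \ref{gamma_n to alpha_n}, and your substitutions ($\tau_n=1$ from $c_n=0$, all three prefactors collapsing to $1$, and the bracket reducing to $\alpha_{n-1}-2(l_{n+1}-l'_{n+1})$) are precisely the computation the paper leaves implicit. The remark about $\{b_{n+1}\}$ needing to remain a positive chain sequence so that $\{l'_{n+1}\}$ exists is a sensible point of care, consistent with the paper's framing.
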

\subsection{Co-dilation and Chain sequences}\label{Co-dilation and Chain sequences}
Considering $c_{n}=0$, $n\geq 1$, $\lambda_{2}=1/2$ and $\lambda_{n+1}=1/4$, $n\geq 2$ in \eqref{special R2 shifted}. The sequence $\{\lambda_{n+1}\}_{n \geq 1}$ is a SPPCS (single parameter positive chain sequence) and uniquely determines its parameters i.e. $l_1=0$ and $\{l_{n+1}\}_{n \geq 1} = 1/2$ which is also the minimal parameter sequence of $\{\lambda_{n+1}\}_{n \geq 1}$. With these conditions, it can be easily verified that 
\begin{align}
	\mathcal{P}_{n}(x)&= \left(\dfrac{x-i}{2}\right)^n+\left(\dfrac{x+i}{2}\right)^n, \label{P_n x for lambda= 1/2}\\
	\mathcal{Q}_{n}(x)& = i\left(\dfrac{x-i}{2}\right)^n-i\left(\dfrac{x+i}{2}\right)^n, \quad n \geq 1, \label{Q_n x for lambda= 1/2}
\end{align}
which from \eqref{P_n x for lambda= 1/2}, in the view of \eqref{r_n to P_n} and transform $z = \dfrac{x+i}{x-i}$ implies
\begin{align*}
	r_n(z)&=z^n+1, \quad n \geq 1,
	%q_n(z)& = \dfrac{z^n-1}{z-1}, 
\end{align*}
and hence, from \eqref{phi_n to r_n}, we have the monic OPUC as
\begin{align*}
	\phi_{n}(z)=z^n, \quad n \geq 0.
\end{align*}
The associated Verblunsky coefficients are $\alpha_n = 0$, $n \geq 0$ which can be computed via \eqref{alpha_n and tau_n} and the corresponding probability measure $\mu$ is the Lebesgue measure given by $d\mu(\xi) = \dfrac{1}{2i\pi \xi}d\xi$. \par 
%Now, we study the effect of a special type of perturbation called co-dilation in the chain sequence $\{\lambda_{n+1}\}_{n \geq 1}$, that gives a new set of Verblunsky coefficients from which sequence of Szeg\H{o} polynomials is constructed. A new sequence of $R_{II}$ polynomials is obtained in two ways: one via a direct computation using Szeg\H{o} polynomials and another from the transfer matrix approach in \Cref{A transfer matrix approach R2 pp}. Further, it is shown that the results obtained from both the approach are consistent. Apart from this, such perturbations leads to interesting consequences related to parameter sequences, corresponding measure and quadrature rules.\par
Consider a new sequence $\{\lambda'_{n+1}=1/4\}_{n \geq 1}$ obtained by a single modification at the starting of the chain sequence $\{\lambda_{n+1}\}_{n \geq 1}$ i.e. $\lambda'_{2}=\nu_2\lambda_{2}$ where $\nu_2 = 1/2$ is the dilation factor chosen in such a way that $\{\lambda'_{n+1}\}_{n \geq 1}$ is again a positive chain sequence. \par 
As mentioned in the \Cref{Introduction}, we can obtain the new sequence of $R_{II}$ polynomials via direct computation. To be precise, it is interesting to note that this new chain sequence $\{\lambda'_{n+1}\}_{n \geq 1}$ is not a SPPCS. Its minimal and maximal parameter sequences are given as
\begin{align*}
	l'_{n+1}=\dfrac{n}{2n+2}, \quad n \geq 0, \quad \rm{and} \quad M'_{n+1}=\dfrac{1}{2}, \quad n \geq 0,
\end{align*}
respectively. This fact, alongwith \Cref{gamma_n to alpha_n} gives new sequence of Verblunsky coefficients $\gamma_{n-1}=-\dfrac{1}{n+1}$. Using $\{\gamma_{n-1}\}_{n \geq 1}$, from the Szeg\H{o} recurrence \eqref{Szego recurrence R2}, monic OPUC can be constructed as
\begin{align*}
	\phi'_{n}(z)=\dfrac{(n+1)z^{n}+nz^{n-1}+(n-1)z^{n-2}+\ldots+2z+1}{n+1}, \quad n \geq 0.
\end{align*}
Comparing it to \eqref{phi_n to r_n}, we get the palindromic polynomials
\begin{align*}
	r'_n(z)=\dfrac{z^{n+1}-1}{z-1}, \quad n \geq 0.
\end{align*}
which from \eqref{r_n to P_n} gives new sequence of $R_{II}$ polynomials
\begin{align*}
	\mathcal{P}'_{n}(x)= i\left(\dfrac{x-i}{2}\right)^{n+1}-i\left(\dfrac{x+i}{2}\right)^{n+1}, \quad n \geq 0.
\end{align*}
The corresponding measure of orthogonality is found to be $d\mu(\xi) = \dfrac{(1-\xi)(\xi-1)}{4i\pi \xi^2}d\xi$ \cite{Esmail Ranga 2018}. For a recent treatise on numerical quadrature arising out of these polynomials, see \cite{Bracciali Pereira ranga 2020}. \par 

\subsection{Complementary chain sequences}
Related to the chain sequences is the important concept of complementary chain sequences which can be defined as the following:
\begin{definition}\cite{KKB ranga swami 2016}
If $\{l_{n} \}_{n \geq 1}$ is the minimal parameter sequence of $\{\lambda_{n+1} \}_{n \geq 1}$, then the sequence $\{d_{n+1} \}_{n \geq 1}$, whose minimal parameter sequence is a new sequence $\{k_{n} \}_{n \geq 1}$, is the complementary chain sequence of $\{d_{n+1} \}_{n \geq 1}$ when $k_1 = 0$ and $k_n = 1-l_n$, $n \geq 2$.
\end{definition}
The motivation for this study follows from the fact that the expression \eqref{phi_n to P_n} gives an explicit relation among $R_{II}$ polynomials, Szeg\H{o} polynomials and minimal parameter sequence $\{l_{n} \}_{n \geq 1}$ of $\{\lambda_{n+1} \}_{n \geq 1}$.
\begin{theorem}\label{CCS theorem}
Let $\{c_{n} \}_{n \geq 1}$ and $\{\lambda_{n+1}\}_{n \geq 1}$ be as given in \eqref{special R2 shifted}. Suppose $\{l_{n+1}\}_{n \geq 0}$ is the minimal parameter sequence of $\{\lambda_{n+1}\}_{n \geq 1}$ and further let $\{k_{n+1}\}_{n \geq 0}$ be the minimal parameter sequence of positive chain sequence $\{d_{n+1}\}_{n \geq 1}$ which is obtained as complementary to $\{\lambda_{n+1}\}_{n \geq 1}$. Suppose
\begin{align*}
\alpha_{n-1}=-\dfrac{1}{\tau_n}\dfrac{1-2l_{n+1}-ic_{n+1}}{1-ic_{n+1}}, \qquad \beta_{n-1}=-\dfrac{1}{\tau_n}\dfrac{1-2k_{n+1}-ic_{n+1}}{1-ic_{n+1}},  
\end{align*}
where $\tau_{n}=\tau_{n-1} \dfrac{1-ic_n}{1+ic_n}$ for $n\geq 1$. Let $\mu(z)$ and $\nu(z)$ be the probability measures corresponding to the Verblunsky coefficients $\alpha_{n-1}$ and $\beta_{n-1}$. Then, the following can be derived:
%\begin{enumerate}
{\rm (1)} If $\{\lambda_{n+1}\}_{n \geq 1}$ has multiple parameter sequences and measure $\mu(z)$ is such that the value of the integral $\displaystyle\int_{\mathbb{T}}\dfrac{1}{|z-1|^2}d\mu(z)$ is finite, then corresponding to complementary chain sequence $\{d_{n+1}\}_{n \geq 1}$, measure $\nu(z)$ is such that the value of the integral $\displaystyle\int_{\mathbb{T}}\dfrac{1}{|z-1|^2}d\nu(z)$ is infinite.\\
{\rm (2)} $\beta_{n-1}=-\bar{\alpha}_{n-1} \bar{\tau}_n \bar{\tau}_{n+1}, ~ n\geq 1$ i.e. the Verblunsky coefficients $\beta_{n-1}$ are just the linear transformation of $\alpha_{n-1}$.\\
{\rm (3)} If $c_n=(-1)^nc$, $c \in \mathbb{R}$, $\beta_{n-1}=-\dfrac{1+ic}{1-ic}\alpha_{n-1}$, $n\geq 1$.
%\end{enumerate}
\end{theorem}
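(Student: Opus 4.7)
The plan splits naturally into three parts of increasing subtlety. Parts (2) and (3) are essentially direct calculations once the formulas are manipulated correctly, while (1) rests on two classical equivalences from chain-sequence / OPUC theory that I would need to invoke as external input.

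For (2), I would start from the defining relation $k_{n+1}=1-l_{n+1}$ (valid for $n\geq 1$), which gives $1-2k_{n+1}=-(1-2l_{n+1})$ and hence the closed form $\beta_{n-1}=\tfrac{1}{\tau_n}\frac{(1-2l_{n+1})+ic_{n+1}}{1-ic_{n+1}}$. Conjugating the formula for $\alpha_{n-1}$ (using that $c_{n+1},l_{n+1}$ are real) and then using $|\tau_n|=1$ together with the recurrence $\bar\tau_{n+1}=\bar\tau_n\tfrac{1+ic_{n+1}}{1-ic_{n+1}}$, the quantity $-\bar\alpha_{n-1}\bar\tau_n\bar\tau_{n+1}$ telescopes: the factor $\bar\tau_n$ collapses against its inverse and $(1+ic_{n+1})$ cancels, leaving precisely the closed form of $\beta_{n-1}$ above. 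This step is pure bookkeeping.

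For (3), when $c_n=(-1)^n c$ the quotient $\tfrac{1-ic_n}{1+ic_n}$ equals $\tfrac{1+ic}{1-ic}$ for odd $n$ and its reciprocal for even $n$. A short induction on the defining recurrence for $\tau_n$ gives $\tau_{2m}=1$ and $\tau_{2m+1}=\tfrac{1+ic}{1-ic}$, so for every $n\geq 1$ exactly one of $\tau_n,\tau_{n+1}$ equals $1$ and the other equals $\tfrac{1+ic}{1-ic}$. Consequently the Möbius factor $\bar\tau_n\bar\tau_{n+1}$ is the same constant $\tfrac{1-ic}{1+ic}$ for every $n\geq 1$. Plugging this into the identity proved in (2) and simplifying yields the stated closed-form relation between $\beta_{n-1}$ and $\alpha_{n-1}$.

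For (1), my strategy is to combine two classical equivalences: (a) a positive chain sequence $\{\lambda_{n+1}\}$ is an SPPCS if and only if its complementary chain sequence admits more than one parameter sequence, i.e.\ complementarity reverses the SPPCS status, and (b) for measures on $\mathbb{T}$ whose Verblunsky coefficients arise from a chain sequence via \eqref{alpha_n and tau_n}, the integral $\int_{\mathbb{T}}|z-1|^{-2}d\mu(z)$ is finite precisely when the chain sequence has multiple parameter sequences. Given these, the hypothesis on $\mu$ forces $\{\lambda_{n+1}\}$ to have multiple parameter sequences, so by (a) $\{d_{n+1}\}$ is SPPCS, and by (b) applied to $\nu$ this yields $\int_{\mathbb{T}}|z-1|^{-2}d\nu(z)=\infty$. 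The main obstacle is locating and applying these two equivalences in the exact form appropriate to the $R_{II}$-type framework connected to OPUC by \Cref{Esmail ranga R2 pp theorem 3.2}: the SPPCS/complementarity duality and the $|z-1|^{-2}$-integrability criterion are both standard but technical, and once they are cited precisely part (1) is immediate, while (2) and (3) amount to computation.
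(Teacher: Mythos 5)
Your overall strategy coincides with the paper's. For part (2) the paper does exactly what you propose: it writes $\beta_{n-1}=-\tfrac{1}{\tau_n}\tfrac{-1+2l_{n+1}-ic_{n+1}}{1-ic_{n+1}}$ using $k_{n+1}=1-l_{n+1}$, conjugates the formula for $\alpha_{n-1}$, and matches the two using $|\tau_n|=1$ and the recurrence for $\tau_{n+1}$. For part (1) the paper likewise invokes the complementarity lemma (Lemma 2.4 of the cited complementary-chain-sequence paper) to conclude that $\{d_{n+1}\}$ is an SPPCS, then Wall's criterion $\sum_{n\ge 2}\prod_{j=2}^{n}k_j/(1-k_j)=\infty$, and finally the argument inside the proof of Theorem 3.2 of Ismail--Ranga, which is precisely your equivalence (b) linking SPPCS-ness to divergence of $\int_{\mathbb{T}}|z-1|^{-2}\,d\nu$. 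Note that only one direction of your claimed duality (a) is needed and is what the cited lemma actually provides: non-SPPCS implies the complementary sequence is SPPCS; you do not need (and should not assert) the full reversal.

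The one point where your write-up overclaims is part (3). Your computation of $\tau_n$ is correct ($\tau_{2m}=1$, $\tau_{2m+1}=\tfrac{1+ic}{1-ic}$, so $\bar\tau_n\bar\tau_{n+1}=\tfrac{1-ic}{1+ic}$ for all $n$), but substituting this into the identity of part (2) gives
\begin{align*}
\beta_{n-1}=-\dfrac{1-ic}{1+ic}\,\bar{\alpha}_{n-1},
\end{align*}
which involves $\bar{\alpha}_{n-1}$ and the reciprocal of the stated constant; it reduces to the displayed claim $\beta_{n-1}=-\tfrac{1+ic}{1-ic}\alpha_{n-1}$ only when $\alpha_{n-1}(1+ic)^2$ is real, which fails for generic $l_{n+1}$. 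So the step ``plugging into (2) yields the stated relation'' does not go through literally. The paper offers no help here (it declares part (3) ``straightforward''), so this appears to be an imprecision in the statement itself rather than a defect of your method, but you should either prove the conjugated identity above or record the extra hypothesis under which the unconjugated version holds.
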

\begin{proof}
Given that $\{\lambda_{n+1}\}_{n \geq 1}$ is a non-SPPCS, it follows from \cite[Lemma 2.4]{KKB ranga swami 2016} that its complementary chain sequence $\{d_{n+1}\}_{n \geq 1}$ is a SPPCS. By Wall's criteria
\begin{align*}
	\sum_{n=2}^{\infty}\prod_{j=2}^{n}\dfrac{k_j}{1-k_j} = \infty.
\end{align*}
Using above fact in the proof of \cite[Theorem 3.2]{Esmail Ranga 2018} proves the first part of the theorem. \par 
The second part can be proved using the following set of expressions
\begin{align*}
\beta_{n-1}&=-\dfrac{1}{\tau_n}\dfrac{1-2k_{n+1}-ic_{n+1}}{1-ic_{n+1}}= -\dfrac{1}{\tau_n}\dfrac{-1+2l_{n+1}-ic_{n+1}}{1-ic_{n+1}} \\
%&\implies -\beta_{n-1}{\tau_n}(1-ic_{n+1})= -1+2l_{n+1}-ic_{n+1}\\
\text{and}~  \bar{\alpha}_{n-1}&=\dfrac{1}{\bar{\tau}_n} \dfrac{-1+2l_{n+1}-ic_{n+1}}{1+ic_{n+1}}. %\implies \bar{\alpha}_{n-1}\bar{\tau}_n(1+ic_{n+1})= -1+2l_{n+1}-ic_{n+1}
\end{align*}
The proof of third part is straight forward.
\end{proof}

\subsection{An illustration}
Consider the $R_{II}$ type recurrence
\begin{align*}
	&\mathcal{P}_{n+1}(x) = \left(x-\dfrac{\theta}{\zeta+n+1}\right)\mathcal{P}_n(x)-\dfrac{1}{4}\dfrac{n(2\zeta+n+1)}{(\zeta+n)(\zeta+n+1)} (x^2+1)\mathcal{P}_{n-1}(x), \\
\text{with} \quad &c_n = \dfrac{\theta}{\zeta+n}, \quad \lambda_{n+1} =\dfrac{1}{4}\dfrac{n(2\zeta+n+1)}{(\zeta+n)(\zeta+n+1)}, \quad \zeta, \theta \in \mathbb{R},~ \zeta > -1, ~ n\geq 1.
\end{align*}
The positive chain sequence $\{\lambda_{n+1}\}_{n \geq 1}$ has multiple parameter sequence except $-1/2 \geq \zeta > -1$. Its minimal parameter sequence is 
\begin{align*}
	l_{n+1} = \dfrac{n}{2(\zeta+n+1)}, \quad n\geq 0.
\end{align*}
As shown in \cite{Esmail Ranga 2018}, the $R_{II}$ polynomials for $\theta=0$ (which means when $c_n=0$) are given as
\begin{align*}
\mathcal{P}_{n}(x)= \dfrac{(2\zeta+2)_n}{(\zeta+1)_n}\dfrac{(x-i)^n}{2^n}F(-n,\zeta+1;2\zeta+2;\dfrac{-2i}{x-i}), \quad n\geq 1,
\end{align*}
and hence, from \eqref{phi_n to P_n}, the monic OPUC, corresponding Verblunsky coefficients and associated probability measure are given, respectively by
\begin{align}
&\phi_n(z)=\dfrac{(2\zeta+3)_n}{(\zeta+2)_n}F(-n,\zeta+2;2\zeta+3;1-z), \nonumber \\
&\alpha_{n-1} = -\dfrac{(\zeta+1)_n}{(\zeta+2)_n}, \quad n\geq 1, \qquad \mbox{and} \label{alpha_n-1}\\
& d\mu(e^{it})=\dfrac{2^{2\zeta+1}(\Gamma(\zeta+2))^2}{\pi\Gamma(2\zeta+3)}\sin^{2\zeta+2}t/2 dt. \nonumber
\end{align}
As it can be seen that polynomials $\mathcal{P}_{n+1}(x)$ depends on $\zeta$, we will use the notation $\mathcal{P}^{(\zeta)}_{n+1}(x)$ and similarly, $\lambda^{(\zeta)}_{n+1}$ for $\lambda_{n+1}$ and $c^{(\zeta)}_n$ for $c_n$. 
%The numerator polynomials $\mathcal{Q}^{(\zeta)}_{n+1}(x)$ associated with $\mathcal{P}^{(\zeta)}_{n+1}(x)$ satisfy
%\begin{align}
%	\mathcal{Q}^{(\zeta)}_{n+1} = (x-c^{(\zeta)}_{n+1})\mathcal{Q}^{(\zeta)}_n-\lambda^{(\zeta)}_{n+1} (x^2+1)\mathcal{Q}^{(\zeta)}_{n-1}, \quad n\geq 1,
%\end{align}
%with $\mathcal{Q}^{(\zeta)}_{0}=0$ and $\mathcal{Q}^{(\zeta)}_{1}=1$. These are also called second kind $R_{II}$ polynomials. \par 
Now, consider the parameter sequence $k_n=1-l_n=(2\zeta+n+1)/(2\zeta+2n)$, $n\geq 1$. The resulting chain sequence is 
\begin{align*}
d^{(\zeta)}_{2}=\dfrac{2\zeta+3}{2\zeta+4}, \quad d^{(\zeta)}_{n+1}= \dfrac{1}{4}\dfrac{(n-1)(2\zeta+n+2)}{(\zeta+n)(\zeta+n+1)}, \quad n\geq 2.
\end{align*}
Let $\beta_{n-1}$ be the Verblunsky coefficients corresponding to measure $\nu(z)$. From \Cref{CCS theorem}, we get
\begin{align}\label{beta_n-1}
\beta_{n-1}=-\dfrac{1}{\tau_n}\dfrac{1-2k_{n+1}-ic_{n+1}}{1-ic_{n+1}},	
\end{align}
and from \eqref{phi_n to P_n}, the corresponding monic OPUC
\begin{align*}
\hat{\phi}_{n-1}(\xi)= \dfrac{-i2^{n-1}}{\prod_{j=1}^{n}(1+ic^{(\zeta)}_j)}\dfrac{1}{(x-i)^{n-1}}[\mathcal{\hat{P}}^{(\zeta)}_n(x)-(1-k_n)(x-i)\mathcal{\hat{P}}^{(\zeta)}_{n-1}(x)], \quad n \geq 1,
\end{align*}
where the polynomials $\mathcal{\hat{P}}_n(x)$ given by
\begin{align*}
\mathcal{\hat{P}}^{(\zeta)}_{n+1}(x) = (x-c^{(\zeta)}_{n+1})\mathcal{\hat{P}}^{(\zeta)}_n(x)-d^{(\zeta)}_{n+1} (x^2+1)\mathcal{\hat{P}}^{(\zeta)}_{n-1}(x), \quad n\geq 1,
\end{align*}
with $\mathcal{\hat{P}}^{(\zeta)}_{0}(x)=1$ and $\mathcal{\hat{P}}^{(\zeta)}_{1}(x)=x-c^{(\zeta)}_{1}$ are obtained. A characterization of the recurrence coefficients of $\mathcal{\hat{P}}^{(\zeta)}_n(x)$ in terms of the recurrence coefficients of $\mathcal{P}^{(\zeta)}_n(x)$ can be given as $d^{(\zeta)}_{n+1}=\lambda^{(\zeta+1)}_{n}$, $n\geq 2$ and $c^{(\zeta)}_{n+1}=c^{(\zeta+1)}_{n+1}$, as it is difficult to find a closed expression for $\mathcal{\hat{P}}^{(\zeta)}_n(x)$. For this case too, $c^{(\zeta)}_{n+1}(=c_{n+1})=0$, $n\geq 0$ which, by \Cref{CCS theorem}, implies that $\alpha_{n-1}=-\beta_{n-1}$, $n\geq 1$. This can also be verified using the expression for $k_n$ in \eqref{beta_n-1} and then comparing it with \eqref{alpha_n-1}. \par 

%Note that $\mathcal{\hat{P}}^{(\zeta)}_n = \mathcal{Q}^{(\zeta+1)}_n$, $n\geq 1,$ follows from the fact that $c^{(\zeta)}_{n+1}=c^{(\zeta+1)}_{n+1}$ and $d^{(\zeta)}_{n+1}=\lambda^{(\zeta+1)}_{n}$, $n\geq 26$ and thus

Results analogous to \Cref{gamma_n to alpha_n} and \Cref{CCS corollary} can be stated using complementary chain sequences. In this regard, we give the following remark:
\begin{remark}
If $\{k'_{n+1}\}_{n \geq 0}$ is the minimal parameter sequence of the co-dilated complementary chain sequences, say $\{d'_{n+1}\}_{n \geq 1}$. Then \Cref{CCS corollary} implies
\begin{align*}
	\gamma_{n-1}= \alpha_{n-1}+\hat{\alpha}_{n-1},~ \mbox{where} ~\hat{\alpha}_{n-1} =2(k_{n+1}-k'_{n+1}).	
\end{align*}
\end{remark}
To illustrate the consequences of co-dilation in complementary chain sequences, consider the chain sequence $\{\lambda_{n+1}\}_{n \geq 1}$ with minimal parameters $l_1=0$, $l_{n+1}=(n+2)/(2n+2)$, $n \geq 1$. By \cite[Lemma 2.5]{KKB ranga swami 2016}, $1/2<l_{n+1}<1$ implies $\{\lambda_{n+1}\}_{n \geq 1}$ is SPPCS. Now, the sequence $\{k_{n+1}\}_{n \geq 0}$ with $k_1=0$ and $k_{n+1}=1-l_{n+1}=n/(2n+2)$ is the minimal parameter sequence of the complementary chain sequence $\{d_{n+1}\}_{n \geq 1}$ where $d_{n+1}$ is the constant sequence $\{1/4\}$ which is known to be non-SPPCS. In this case, it is shown in \Cref{Co-dilation and Chain sequences} that the measure $\mu$ associated to the Szeg\H{o} polynomials is the Lebesgue measure given by $d\mu(z) = \dfrac{1}{2i\pi z}dz$ and with respect to which $\displaystyle\int_{\mathbb{T}}\dfrac{1}{|z-1|^2}d\mu(z)$ does not exist, which in turn means that $z=1$ is in the support of the measure $\mu$.\par 
The co-dilated complementary chain sequence $\{d'_{n+1}\}_{n \geq 1}$ obtained on perturbing $d_2$ as $d'_2=\nu_2d_2$ where $\nu_2=2$ is given as $d'_2=1/2$ and $d'_{n+1}=1/4$, $n \geq 2$. Its minimal parameter sequence is $\{k'_{n+1}\}_{n \geq 0}$ where $k'_1=0$ and $k'_{n+1}=1/2$ which is also maximal and this makes $\{d'_{n+1}\}_{n \geq 1}$ a SPPCS. Further, from \cite[Example 1]{Esmail Ranga 2018}, the associated measure is $d\mu'(z) = \dfrac{(1-z)(z-1)}{4i\pi z^2}dz$ with respect to which it is clear that $\displaystyle\int_{\mathbb{T}}\dfrac{1}{|z-1|^2}d\mu'(z)$ exists which implies $z=1$ is not in the support of $\mu'$.
	
\section{Interlacing and monotonicity of Zeros}\label{Distribution of Zeros R2 pp}
We need the following results given in \cite{Esmail Ranga 2018} to prove our next result.
\begin{theorem}\cite[Theorem 2.1]{Esmail Ranga 2018}\label{Ranga leading coefficient}
The polynomial $\mathcal{P}_n$ is of exact degree $n$ with positive leading coefficient.	Precisely, if we denote by $\mathfrak{p}$ the leading coefficient of $\mathcal{P}_n(x)$, then $\mathfrak{p}_0 = 1$, $\mathfrak{p}_1 = 1$ and 
\begin{align*}
	0 < (1-l_{n-1})=\dfrac{\mathfrak{p}_n}{\mathfrak{p}_{n-1}} < 1, \quad n \geq 2.
\end{align*}
Here, $\{l_{n}\}_{n \geq 0}$ is the minimal parameter sequence of the positive chain sequence $\{\lambda_{n}\}_{n \geq 1}$.
\end{theorem}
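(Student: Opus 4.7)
The strategy is induction on $n$, simultaneously tracking the exact degree of $\mathcal{P}_n$, the positivity of the leading coefficient $\mathfrak{p}_n$, and the ratio $\mathfrak{p}_n/\mathfrak{p}_{n-1}$. The base cases are immediate from the initial data: $\mathcal{P}_0(x) = 1$ and $\mathcal{P}_1(x) = x - c_1$ give $\mathfrak{p}_0 = \mathfrak{p}_1 = 1$, and both are clearly of exact degrees $0$ and $1$.

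For the inductive step, suppose $\mathcal{P}_{n-1}$ and $\mathcal{P}_n$ have exact degrees $n-1$ and $n$ with positive leading coefficients. In the recurrence
\[
\mathcal{P}_{n+1}(x) = (x - c_{n+1})\mathcal{P}_n(x) - \lambda_{n+1}(x^2+1)\mathcal{P}_{n-1}(x),
\]
both summands on the right have degree $n+1$, with top coefficients $\mathfrak{p}_n$ and $\lambda_{n+1}\mathfrak{p}_{n-1}$ respectively. Reading off the $x^{n+1}$ term yields the scalar recursion
\[
\mathfrak{p}_{n+1} = \mathfrak{p}_n - \lambda_{n+1}\mathfrak{p}_{n-1}.
\]
Setting $q_n := \mathfrak{p}_n/\mathfrak{p}_{n-1}$ (so $q_1 = 1$), this rearranges to the Riccati-type relation $q_{n+1} = 1 - \lambda_{n+1}/q_n$, or equivalently $q_n\bigl(1 - q_{n+1}\bigr) = \lambda_{n+1}$.

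Now define $\ell_{n-1} := 1 - q_n$; then the identity above becomes $(1 - \ell_{n-1})\ell_n = \lambda_{n+1}$, which (under the indexing shift matching the theorem's convention) is precisely the defining recurrence for a parameter sequence of the positive chain sequence $\{\lambda_n\}_{n\geq 1}$. Crucially, the initial data $q_1 = 1$ forces $\ell_0 = 0$, and a parameter sequence with this starting value must coincide with the \emph{minimal} parameter sequence $\{l_n\}_{n\geq 0}$, since the minimal parameter sequence is characterised by starting at zero. Hence $\ell_{n-1} = l_{n-1}$, which yields $q_n = 1 - l_{n-1}$ as asserted.

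Finally, since $\{l_n\}_{n\geq 0}$ is a parameter sequence of a positive chain sequence, we have $0 < l_{n-1} < 1$ for $n \geq 2$, giving $0 < q_n < 1$ and in particular $\mathfrak{p}_n > 0$; this closes the induction and confirms exact degree and positivity. The main conceptual obstacle is not the algebra but the identification with the \emph{minimal} parameter sequence: the recursion alone only guarantees that $\{1 - q_n\}$ is \emph{some} parameter sequence, and one must carefully track the initial condition $q_1 = 1$ through the index shift to invoke the uniqueness property of the parameter sequence beginning at $0$.
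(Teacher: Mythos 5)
The paper offers no proof of this statement at all --- it is quoted verbatim from Ismail and Ranga \cite[Theorem 2.1]{Esmail Ranga 2018} --- so there is no internal argument to compare against. Your proof is correct and is essentially the standard one from that source: the leading coefficients satisfy $\mathfrak{p}_{n+1}=\mathfrak{p}_n-\lambda_{n+1}\mathfrak{p}_{n-1}$, the quantities $1-\mathfrak{p}_{n+1}/\mathfrak{p}_n$ obey the parameter-sequence recursion with initial value $0$, and a parameter sequence of a positive chain sequence is determined by its first term, so they coincide with the minimal parameter sequence, forcing every ratio into $(0,1)$. The only point needing care is the paper's inconsistent indexing of $\{l_n\}$ (the Introduction starts it at $l_1=0$, the theorem at $l_0$), which you handle with an explicit shift; a marginally cleaner way to close the induction is to run it forward directly, $q_{n+1}=1-\lambda_{n+1}/(1-l_n)=1-l_{n+1}$, which makes the well-definedness of each division ($q_n=1-l_n>0$) automatic rather than appealing to a separate uniqueness statement.
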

\begin{theorem}\cite[Theorem 2.2]{Esmail Ranga 2018}\label{Ranga zeros interlacing R2}
The zeros $x^{(n)}_j$, $j=1,2, \ldots n$ of $\mathcal{P}_n$ are real and simple. Assuming the ordering $x^{(n)}_j < x^{(n)}_{j-1}$ for the zeros, we also have the interlacing property
\begin{align*}
	x^{(n+1)}_{n+1} < x^{(n)}_{n}<x^{{n+1}}_n < \ldots < x^{(n+1)}_2 < x^{(n)}_{1} <x^{(n+1)}_1, \quad  n \geq 1.
\end{align*}
\end{theorem}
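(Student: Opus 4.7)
The proof I have in mind is a standard Sturm-type induction that exploits the crucial analytic feature of the $R_{II}$ recurrence on the real line: the factor $\lambda_{n+1}(x^2+1)$ is strictly positive for every real $x$, so it behaves exactly like a positive three-term recurrence weight. Combined with \Cref{Ranga leading coefficient} (which guarantees that $\mathcal{P}_n$ has positive leading coefficient, hence $\mathcal{P}_n(x)\to+\infty$ as $x\to+\infty$ and the sign at $-\infty$ is $(-1)^n$), this is enough to run the classical argument.

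I would proceed by induction on $n$. The base case $n=1$ is immediate: $\mathcal{P}_1(x)=x-c_1$ has the single real zero $c_1$, and for $\mathcal{P}_2$ one checks directly using $\mathcal{P}_2(c_1)=-\lambda_2(c_1^2+1)<0$ together with $\mathcal{P}_2(x)\to+\infty$ at $\pm\infty$ that $\mathcal{P}_2$ has exactly two simple real zeros straddling $c_1$. Suppose now that $\mathcal{P}_{n-1}$ has $n-1$ simple real zeros and $\mathcal{P}_n$ has $n$ simple real zeros which strictly interlace them. Evaluating the recurrence at a zero $x_j^{(n)}$ of $\mathcal{P}_n$ gives
\begin{align*}
\mathcal{P}_{n+1}\bigl(x_j^{(n)}\bigr) \;=\; -\lambda_{n+1}\bigl((x_j^{(n)})^2+1\bigr)\,\mathcal{P}_{n-1}\bigl(x_j^{(n)}\bigr).
\end{align*}
By the inductive interlacing hypothesis, the values $\mathcal{P}_{n-1}(x_j^{(n)})$ alternate in sign as $j$ runs from $1$ to $n$; since the factor $\lambda_{n+1}((x_j^{(n)})^2+1)$ is strictly positive, so do the values $\mathcal{P}_{n+1}(x_j^{(n)})$. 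By the intermediate value theorem this produces $n-1$ distinct zeros of $\mathcal{P}_{n+1}$, one in each open interval $(x_{j+1}^{(n)},x_j^{(n)})$.

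It remains to locate the two extreme zeros of $\mathcal{P}_{n+1}$. At the largest zero $x_1^{(n)}$, the sign-alternation analysis (using that $\mathcal{P}_{n-1}(x)\to+\infty$ and $\mathcal{P}_{n-1}$ is positive for $x>x_1^{(n-1)}$, hence in particular at $x_1^{(n)}>x_1^{(n-1)}$) gives $\mathcal{P}_{n+1}(x_1^{(n)})<0$. Combined with $\mathcal{P}_{n+1}(x)\to+\infty$ this forces a zero $x_1^{(n+1)}>x_1^{(n)}$. A parallel argument at the smallest zero $x_n^{(n)}$, using the sign $(-1)^{n-1}$ of $\mathcal{P}_{n-1}$ on $(-\infty,x_{n-1}^{(n-1)})$ and the sign $(-1)^{n+1}$ of $\mathcal{P}_{n+1}$ at $-\infty$, produces a further zero $x_{n+1}^{(n+1)}<x_n^{(n)}$. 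Counting gives $n+1$ simple real zeros of $\mathcal{P}_{n+1}$ (there can be no more, since $\deg \mathcal{P}_{n+1}=n+1$ by \Cref{Ranga leading coefficient}), which are precisely in the claimed interlacing pattern.

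The main delicate point is the bookkeeping of signs at the extreme zeros. One must know, beyond mere interlacing of $\mathcal{P}_{n-1}$ with $\mathcal{P}_n$, that $\mathcal{P}_{n-1}$ does not vanish at $x_1^{(n)}$ or $x_n^{(n)}$ and that it takes there the sign dictated by its behaviour at $\pm\infty$. This information needs to be folded into the induction hypothesis itself, so I would strengthen the statement being proven to include \emph{signed} interlacing (i.e.\ specifying the sign of $\mathcal{P}_{n-1}$ on each of the $n+1$ intervals cut out by the zeros of $\mathcal{P}_n$); once this is carried along inductively, the two endpoint arguments above go through mechanically and \Cref{Ranga leading coefficient} provides the required positivity of leading coefficients throughout.
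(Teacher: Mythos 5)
This statement is imported verbatim from \cite[Theorem 2.2]{Esmail Ranga 2018}; the present paper gives no proof of it, so there is no internal argument to compare yours against. Your proof is correct and is the classical Sturm-type induction: the only feature of the $R_{II}$ recurrence that matters on the real line is that the coefficient $\lambda_{n+1}(x^2+1)$ is strictly positive for all real $x$, which together with the positivity and correct degree of the leading coefficients supplied by \Cref{Ranga leading coefficient} reduces everything to the standard OPRL interlacing argument, including your two endpoint sign computations. One small simplification: the ``signed interlacing'' strengthening you propose to carry in the induction hypothesis is automatic, since strict interlacing of the simple real zeros of $\mathcal{P}_{n-1}$ and $\mathcal{P}_n$ together with positive leading coefficients already determines the sign of $\mathcal{P}_{n-1}$ on every interval cut out by the zeros of $\mathcal{P}_n$, so no extra bookkeeping needs to be added to the statement being proved.
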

\begin{theorem}\label{interlacing theorem mu R2 pp}
Let $n \geq k$ and $x^{(n)}_j(\mu)$ and $x^{(n)}_j$, $j=1,2, \ldots l$ be the $l$ non common real zeros corresponding to $\mathcal{P}_{n}(x;\mu_k)$ and $\mathcal{P}_{n}(x)$. If $\mu < 0$, then 
\begin{align}\label{interlacing non common R2 pp}
	x^{(n)}_l(\mu) < x^{(n)}_l < x^{(n)}_{l-1}(\mu) < x^{(n)}_{l-1}< \ldots < x^{(n)}_{1}(\mu) < x^{(n)}_{1},
\end{align}
where the role of the zeros $x^{(n)}_j(\mu)$ and $x^{(n)}_j$, $j=1,2, \ldots l$ gets intercharged when $\mu > 0$.
\end{theorem}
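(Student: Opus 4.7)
The strategy is to combine the representation from \Cref{Theorem s_k_x R2 pp} with the strict interlacing of zeros guaranteed by \Cref{Ranga zeros interlacing R2}. In the co-recursive case ($\nu_k=1$), \Cref{Theorem s_k_x R2 pp} gives
$$\mathcal{P}_n(x;\mu_k) = \mathcal{P}_n(x) - \mu_k\,\mathcal{P}_k(x)\,\mathcal{P}^{(k)}_{n-k}(x), \qquad n>k.$$
First I would observe that $\mathcal{P}_n(x;\mu_k)$ again satisfies an $R_{II}$-type recurrence with the same positive chain sequence $\{\lambda_{n+1}\}$, so \Cref{Ranga zeros interlacing R2} applies to both families and each has $n$ real simple zeros with the usual strict interlacing between consecutive degrees. \Cref{corollary zeros sharing R2 pp} then identifies the $k$ common zeros with those of $\mathcal{P}_k$, leaving exactly $l=n-k$ non-common zeros on each side.

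Next I would evaluate the identity at a non-common zero $x^{(n)}_j$ of $\mathcal{P}_n$, obtaining
$$\mathcal{P}_n(x^{(n)}_j;\mu_k) = -\mu_k\,\mathcal{P}_k(x^{(n)}_j)\,\mathcal{P}^{(k)}_{n-k}(x^{(n)}_j),$$
and rewrite the second factor with the Casoratti identity \eqref{D_Pn R2 pp} as
$$\mathcal{P}^{(k)}_{n-k}(x^{(n)}_j) = -\frac{\prod_{s=k}^{n}\lambda_s\,((x_j^{(n)})^2+1)^{n-k}\,\mathcal{P}_{k-1}(x^{(n)}_j)}{\mathcal{P}_{n+1}(x^{(n)}_j)},$$
which is well defined because \Cref{Ranga zeros interlacing R2} forces $\mathcal{P}_{n+1}(x^{(n)}_j)\neq 0$. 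Hence the sign of $\mathcal{P}_n(x^{(n)}_j;\mu_k)$ reduces to the sign of $\mu_k\,\mathcal{P}_k(x^{(n)}_j)\mathcal{P}_{k-1}(x^{(n)}_j)/\mathcal{P}_{n+1}(x^{(n)}_j)$, where the positive factor $((x_j^{(n)})^2+1)^{n-k}$ drops out and $\mathcal{P}_{n+1}(x^{(n)}_j)$ alternates strictly in $j$ by interlacing.

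To conclude, I would deform $\mu$ continuously along each half-line. The implicit function theorem yields
$$\frac{d\,x^{(n)}_j(\mu)}{d\mu} = \frac{\mathcal{P}_k(x^{(n)}_j(\mu))\,\mathcal{P}^{(k)}_{n-k}(x^{(n)}_j(\mu))}{\partial_x\mathcal{P}_n(x^{(n)}_j(\mu);\mu)},$$
which is well defined throughout $\mu\in\mathbb{R}\setminus\{0\}$ because \Cref{Ranga zeros interlacing R2} applied to $\mathcal{P}_n(x;\mu_k)$ keeps every zero simple; moreover the non-common zeros cannot collide with one another nor with the fixed common zeros (zeros of $\mathcal{P}_k$), so the curves $\mu\mapsto x^{(n)}_j(\mu)$ are strictly monotone on all of $(-\infty,0)$ and of $(0,\infty)$. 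Hence it suffices to fix the direction at $\mu=0$, and the $\mu>0$ statement is then obtained by swapping the roles of $x^{(n)}_j(\mu)$ and $x^{(n)}_j$. The main obstacle, in my view, is the combinatorial sign chase: showing that the product $\mathcal{P}_k(x^{(n)}_j)\mathcal{P}_{k-1}(x^{(n)}_j)/\mathcal{P}_{n+1}(x^{(n)}_j)$ together with the sign of $\partial_x\mathcal{P}_n(x^{(n)}_j)$ conspires to give the leftward shift $x^{(n)}_j(\mu)<x^{(n)}_j$ for every $j$ when $\mu<0$. I expect this to follow by a backward induction on $k$ from the strict interlacing between consecutive $R_{II}$ polynomials (\Cref{Ranga zeros interlacing R2}), or equivalently from a Christoffel--Darboux-type kernel identity tailored to $R_{II}$ recurrences.
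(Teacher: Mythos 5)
Your overall strategy --- evaluate the perturbed polynomial at the zeros of $\mathcal{P}_n$ and read off a sign pattern --- is the right one, and your use of $(-1)^j\mathcal{P}_{n+1}(x^{(n)}_j)>0$ from \Cref{Ranga zeros interlacing R2} matches the paper. But there is a genuine gap at exactly the point you flag as ``the main obstacle'': your route leaves the sign of $\mathcal{P}_n(x^{(n)}_j;\mu_k)$ controlled by the product $\mathcal{P}_k(x^{(n)}_j)\mathcal{P}_{k-1}(x^{(n)}_j)/\mathcal{P}_{n+1}(x^{(n)}_j)$, which is not sign-definite and whose alternation in $j$ is precisely what needs to be proved. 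Deferring this to ``a backward induction on $k$'' or a Christoffel--Darboux identity is not a proof; and the deformation argument does not rescue it, since fixing ``the direction at $\mu=0$'' requires the same local sign information, while the auxiliary claims you invoke there (that non-common zeros never collide with each other or with the common zeros as $\mu$ varies over a half-line) are themselves unjustified.

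The paper closes this gap with a single observation you miss: instead of passing through \Cref{Theorem s_k_x R2 pp} and then \eqref{D_Pn R2 pp}, compute the Casoratti determinant of the pair $\bigl(\mathcal{P}_n(x),\mathcal{P}_n(x;\mu_k)\bigr)$ directly. Since the two families satisfy the same recurrence except at level $k$, iterating $D(\mathcal{P}_{n},\mathcal{P}_{n}(\cdot;\mu_k))=\lambda_n(x^2+1)D(\mathcal{P}_{n-1},\mathcal{P}_{n-1}(\cdot;\mu_k))$ down to level $k$ gives \eqref{D(P_n P_mu R2 pp)}, whose right-hand side is $-\mu_k$ times positive factors times the \emph{perfect square} $\mathcal{P}_k^2(x)$. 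Evaluating at a non-common zero $x^{(n)}_j$ then yields $-\mathcal{P}_{n}(x^{(n)}_j;\mu_k)\mathcal{P}_{n+1}(x^{(n)}_j)>0$ for $\mu_k<0$ with no case analysis, and combining with $(-1)^j\mathcal{P}_{n+1}(x^{(n)}_j)>0$ gives $(-1)^{j+1}\mathcal{P}_{n}(x^{(n)}_j;\mu_k)>0$, from which the interlacing and the direction of the shift follow at once. (Incidentally, the discrepancy between your expression $\mathcal{P}_k\mathcal{P}_{k-1}$ and the paper's $\mathcal{P}_k^2$ is a signal that the indexing of the associated polynomial in the representation you quoted should be $\mathcal{P}^{(k+1)}_{n-k-1}$ rather than $\mathcal{P}^{(k)}_{n-k}$; with the corrected index your two identities combine to reproduce the square. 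As written, your chain of substitutions is internally inconsistent with \eqref{D(P_n P_mu R2 pp)}.)
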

\begin{proof}
	For $n \geq k$, using \eqref{casarotti determinant R2}, we can write
\begin{align*}
D(\mathcal{P}_{n}(x), \mathcal{P}_{n}(x;\mu_k)) &= \mathcal{P}_{n}(x)\mathcal{P}_{n+1}(x;\mu_k)-\mathcal{P}_{n}(x;\mu_k)\mathcal{P}_{n+1}(x) \\
&= \lambda_n(x^2+1)D(\mathcal{P}_{n-1}(x), \mathcal{P}_{n-1}(x;\mu_k)),
\end{align*}
which yeilds after a few computation
\begin{align}\label{D(P_n P_mu R2 pp)}
D(\mathcal{P}_{n}(x), \mathcal{P}_{n}(x;\mu_k)) &= -\mu_k(\lambda_n \ldots \lambda_k)(x^2+1)^{n-k}\mathcal{P}^2_{k}(x).
\end{align}
From Theorem \ref{Ranga zeros interlacing R2}, we have $(-1)^j \mathcal{P}_{n+1}(x^{(n)}_j)> 0$, $j=1,2, \ldots n$. Recall that $x^{(n)}_j$, $j=1,2, \ldots n$ are the $n$ real zeros corresponding to $\mathcal{P}_{n}(x)$. Let us assume that $\mathcal{P}_{n}(x;\mu_k)$ and $\mathcal{P}_{n}(x)$ have no common zeros. When $\mu < 0$ and $\{\lambda_{n}\}_{n \geq 1}$ is a positive chain sequence, \eqref{D(P_n P_mu R2 pp)} implies $-\mathcal{P}_{n}(x^{(n)}_j;\mu_k)\mathcal{P}_{n+1}(x^{(n)}_j) > 0$, leading to 
\begin{align*}
	(-1)^{j+1}\mathcal{P}_{n}(x^{(n)}_j;\mu_k) > 0, \quad j \geq 1.
\end{align*}
which shows that the zeros of $\mathcal{P}_{n}(x;\mu_k)$ and $\mathcal{P}_{n}(x)$ will interlace as \eqref{interlacing non common R2 pp}. For $\mu > 0$, sign in \eqref{D(P_n P_mu R2 pp)} changes and subsequently, the result follows from similar analysis.
%Since the number of real zeros of $\mathcal{P}_{n}(x;\mu_k)$ cannot excced $n$ and $\mathcal{P}_{n}(x^{(n)}_1;\mu_k) > 0$, 
\end{proof}
\begin{example}
To illustrate \Cref{interlacing theorem mu R2 pp} when $\mathcal{P}_{n}(x)$ and $\mathcal{P}_{n}(x;\mu_k)$ have common zeros, let us consider special $R_{II}$ type recurrence \eqref{special R2} with $c_n = 0$, $n \geq 0$ and $\lambda_n =1/4$, $n \geq 1$. The sequence $\{\lambda_n\}_{n \geq 1}$ is a positive chain sequence.  
%with its minimal parameter sequence $\{e_n\}_{n \geq 0} = \{0,1/2,1/2,\ldots\}$. 
\end{example}
Rewriting \eqref{special R2} with the above assumptions, we get
\begin{align}\label{Special R2 with c_n=0} 
	\mathcal{P}_{n+1} = x\mathcal{P}_n-\dfrac{1}{4} (x^2+1)\mathcal{P}_{n-1}, \quad n\geq 1. 
\end{align} 
%From the theory of difference equations it can be verified that
%\begin{align*} 
%	\mathcal{P}_{n}(x)= i\left(\dfrac{x-i}{2}\right)^n-i\left(\dfrac{x+i}{2}\right)^n, \quad n \geq 1.
%\end{align*} 
%This example is used to illustrate \Cref{interlacing theorem mu R2 pp} when $\mathcal{P}_{n}(x)$ and $\mathcal{P}_{n}(x;\mu_k)$ have common zeros.  
Precisely, following two cases are considered depending upon the sign of $\mu_k$. All the
\begin{wraptable}{r}{8cm}
	\caption{}\label{T1 R2}
	\renewcommand{\arraystretch}{1.1}
	\centering
	\begin{tabular}{|p{3cm}|p{4cm}|}
		\hline
		Zeros of $\mathcal{P}_{9}(x)$& Zeros of $\mathcal{P}_{9}(x;-0.7)$\\
		\hline
		$-$1.376381920 & $-$1.376381920\\
		\hline
		1.376381920 & 1.376381920\\
		\hline
		0.3249196962 & 0.3249196962\\
		\hline
		$-$0.3249196962 & $-$0.3249196962\\
		\hline
		3.077683537& 1.685063442\\
		\hline
		0.7265425280 & .4309372535\\  
		\hline
		0 &  $-$.2251211415\\
		\hline
		$-$0.7265425280 & $-$1.137754967 \\
		\hline
		$-$3.077683537 & $-$10.75312459 \\
		\hline
	\end{tabular}
	%\end{table}
\end{wraptable}
 calculations are performed and figures are drawn using Maple 18 with Intel Core i3-6006U CPU @ 2.00 Ghz and 8 GB RAM.\par
\textbf{Case I:} $\mu_k < 0$. \par 
%The zeros of $\mathcal{P}_{9}(x)$ are $ 0, 0.7265425280, -0.7265425280, 3.077683537, -3.077683537,\break 0.3249196962, -0.3249196962, 1.376381920, -1.376381920  $  and the zeros of $\mathcal{P}_{9}(x;-0.7)$ are $.3249196962, 1.376381920, -.3249196962, -1.376381920, .4309372535, 1.685063442, -.2251\break211415, -1.137754967, -10.75312459$. 
Clearly, $\mathcal{P}_{9}(x)$ and $\mathcal{P}_{9}(x;-0.7)$ have four zeros in common (see \Cref{T1 R2}) which is in accordance with \Cref{corollary zeros sharing R2 pp}. The location of zeros of $\mathcal{P}_{9}(x)$ and $\mathcal{P}_{9}(x;-0.7)$ in \Cref{Fig1} verifies \Cref{interlacing theorem mu R2 pp}.

\begin{wraptable}{r}{8cm}
	\caption{}\label{T2 R2}
	\renewcommand{\arraystretch}{1.1}
	\centering
	\begin{tabular}{|p{2.5cm}|p{4cm}|}
		\hline
		Zeros of $\mathcal{P}_{7}(x)$& Zeros of $\mathcal{P}_{7}(x;0.43)$\\
		\hline
		0 & 0 \\
		\hline
		1 & 1\\  
		\hline
		$-$1 & $-$1\\
		\hline
		2.414213562 & 3.336754639\\
		\hline
		.4142135624 & .5388256504\\
		\hline
		$-$.4142135624 & $-$.2996923982 \\
		\hline
		$-$2.414213562 & $-$1.855887891\\
		\hline
	\end{tabular}
	%\end{table}
\end{wraptable}
\textbf{Case II:} $\mu_k > 0$. \\
%One can easily verify that the zeros of $\mathcal{P}_{7}(x)$ are $0, 1, -1, .4142135624, -.4142135624, 2.41\break4213562, -2.414213562$ and the zeros of $\mathcal{P}_{7}(x;0.43)$ are $0, 1, -1, .5388256504, 3.336754639,\break -.2996923982, -1.855887891$. 
Observe that $\mathcal{P}_{7}(x)$ and $\mathcal{P}_{7}(x;0.43)$ have three zeros in common (\Cref{T2 R2}) which is in accordance with corollary \ref{corollary zeros sharing R2 pp}. Now, let us look (see \Cref{Fig2}) at the location of uncommon zeros of $\mathcal{P}_{7}(x)$ and $\mathcal{P}_{7}(x;0.43)$. \Cref{interlacing theorem mu R2 pp} is true for this possibility as well.

\begin{figure}[H]
	\includegraphics[scale=0.8]{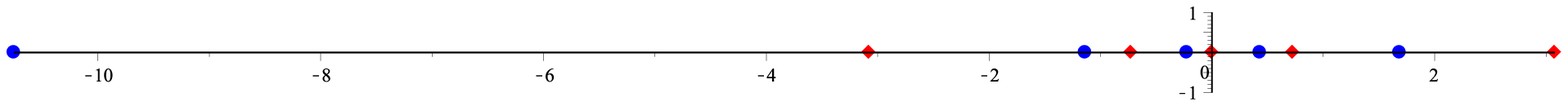}
	\caption{Zeros of $\mathcal{P}_{9}(x)$ (red diamonds) and $\mathcal{P}_{9}(x;-0.7)$ (blue circles)}
	\label{Fig1}
\end{figure}

\begin{figure}[H]
	\includegraphics[scale=0.8]{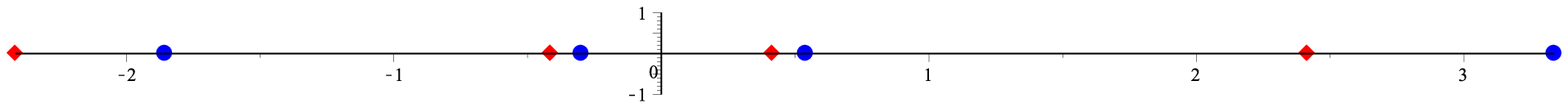}
	\caption{Zeros of $\mathcal{P}_{7}(x)$ (red diamonds) and $\mathcal{P}_{7}(x;0.43)$ (blue circles)}
	\label{Fig2}
\end{figure}

\begin{example}\label{Example 2 CRR R2 pp}
The complementary Routh-Romanovski polynomials $\mathcal{P}_{n}(e;x)$ given by the hypergeometric expression \cite{Finkelshtein Ribeiro Ranga Tyaglov CRR 2020}
\begin{align*} 
	\mathcal{P}_{n}(e;x) = \dfrac{(x-i)^n}{2^n}\dfrac{(2\zeta)_n}{(\zeta)_n}{}_2 F_1 \left(-n,e; e+\bar{e};\dfrac{-2i}{x-i}\right),
\end{align*}	
satisfy the $R_{II}$- type recurrence relation
\begin{align}\label{CRR recurrence}
	\mathcal{P}_{n+1}(e;x)=(x-c^{(e)}_{n+1})\mathcal{P}_n(e;x)-\lambda^{(e)}_{n+1} (x^2+1)\mathcal{P}_{n-1}(e;x), \quad n \geq 0,
\end{align}
with $\mathcal{P}_{0}(e;x)=1$ and $\mathcal{P}_{1}(e;x)=x-c^{(e)}_1$, where
\begin{align*}
	c^{(e)}_n = \dfrac{\theta}{\zeta+n-1}, \qquad \lambda^{(e)}_{n+1} = \dfrac{1}{4}\dfrac{n(2\zeta+n-1)}{(\zeta+n-1)(\zeta+n)}, \quad n \geq 1,
\end{align*}
for $e = \zeta+i\theta$, $\zeta > 0$. They are shown \cite{Finkelshtein Ribeiro Ranga Tyaglov PAMS 2019} to be orthogonal with respect to the weight
\begin{align*}
	\omega^{(\zeta,\theta)}(x)= \dfrac{2^{2\zeta-1}|\Gamma(b)|^2 e^{\theta \pi} (e^{-arccot(x)})^{2\theta}}{\Gamma(2\zeta-1)2\pi (1+x^2)^\zeta}.
\end{align*}
The CRR polynomials are considered to verify \Cref{interlacing theorem mu R2 pp} when $\mathcal{P}_{n}(x)$ and $\mathcal{P}_{n}(x;\mu_k)$ have no common zeros. Here, parameters $\xi=10$ and $\theta=12$ are taken to be positive. The case $\theta < 0$ provides different situation.
%The recurrence relation satisfied by CRR polynomials \cite{Finkelshtein Ribeiro Ranga Tyaglov PAMS 2019}
%\begin{align*}
%\mathcal{P}_{n+1}(x)=\left(x-\dfrac{\theta}{\zeta+n-1}\right)\mathcal{P}_n(x)-\dfrac{1}{4}\dfrac{n(2\zeta+n-1)}{(\zeta+n-1)(\zeta+n)} (x^2+1)\mathcal{P}_{n-1}(x), ~ n \geq 0,
%\end{align*}
\end{example}
 Depending upon the sign of $\mu_k$, following two cases are considered:

\textbf{Case I:} $\mu_k < 0$. \par 
Let us perturb the recurrence coefficient $c_n$ at fouth level such that $c_3 \rightarrow c_3+\mu_3$ where $\mu_3 = -0.3$ to generate a new sequence of polynomials $\mathcal{P}_{n}(x;-0.3)$. 
%The zeros of $\mathcal{P}_{7}(x)$ are $0, 1, -1, .4142135624, -.4142135624, 2.414213562, -2.414213562$ and the zeros of $\mathcal{P}_{7}(x;-0.7)$ are $.3889230237, .8360410806, 1.798436543, -.1499812655, -.4464931246,\break -1.143357753, -3.908568504$.  
Note that $\mathcal{P}_{6}(x)$ and $\mathcal{P}_{6}(x;-0.3)$  have no common zeros as shown in \Cref{T3 R2}.
Clearly, these zeros satisfy the interlacing property in accordance with \Cref{interlacing theorem mu R2 pp} (see \Cref{Fig3}).
%Here, the red diamonds represent the zeros of $\mathcal{P}_{n}(x)$ and blue circles represent the zeros of $\mathcal{P}_{n}(x;\mu_k)$. 

\begin{wraptable}{r}{8cm}
	\caption{}\label{T3 R2}
	\renewcommand{\arraystretch}{1.1}
	\centering
	\begin{tabular}{|p{2.5cm}|p{4cm}|}
		\hline
		Zeros of $\mathcal{P}_{6}(x)$& Zeros of $\mathcal{P}_{6}(x;-0.3)$\\
		\hline
		0.3324095627 & 0.2430260465  \\
		\hline
		0.6295725714 & 0.5966623160 \\  
		\hline
		.9197511115 &0.8619781365 \\
		\hline
		1.273243623 &1.250348102 \\
		\hline
		1.826806110 &1.813673082 \\
		\hline
		2.724441863 & 2.533580638 \\
		\hline
	\end{tabular}
	%\end{table}
\end{wraptable}
\noindent 

\textbf{Case II:} $\mu_k > 0$. \par 
For this purpose, let us perturb the recurrence coefficient $c_n$ at fourth level such that $c_3 \rightarrow c_3+\mu_3$ where $\mu_3 = 1.2$ to generate a new sequence of polynomials $\mathcal{P}_{n}(x;1.2)$. Computed zeros $\mathcal{P}_{7}(x)$ and $\mathcal{P}_{7}(x;1.2)$ are listed in \Cref{T4 R2}. 
%The zeros of $\mathcal{P}_{8}(x)$ are $.5773502692, -.5773502692,\break -.1763269807, .1763269807, -1.191753593, 1.191753593, -2.747477419, 2.747477419$ and the zeros of $\mathcal{P}_{8}(x;0.43)$ are $.2136384560, .6745947225, 1.217861622, 3.752628874, -.1361681214,\break -.4941197847, -1.164767116, -2.152557542$. 
Note that $\mathcal{P}_{7}(x)$ and $\mathcal{P}_{7}(x;1.2)$  have no common zeros. Now, let us look (see \Cref{Fig4}) at the location of these zeros. The results follows \Cref{interlacing theorem mu R2 pp}.

\begin{figure}[H]
	\includegraphics[scale=0.8]{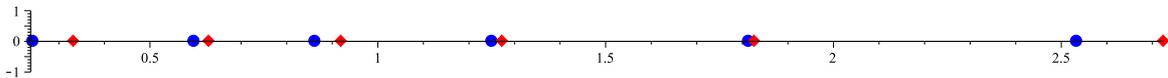}
	\caption{Zeros of $\mathcal{P}_{6}(x)$ (red diamonds) and $\mathcal{P}_{6}(x;-0.3)$ (blue circles)}
	\label{Fig3}
\end{figure}

\begin{figure}[H]
	\includegraphics[scale=0.8]{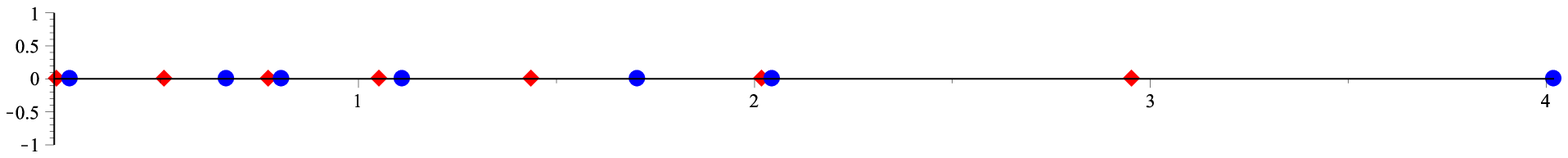}
	\caption{Zeros of $\mathcal{P}_{7}(x)$ (red diamonds) and $\mathcal{P}_{7}(x;1.2)$ (blue circles) }
	\label{Fig4}
\end{figure}
\begin{wraptable}{r}{8cm}
	\caption{}\label{T4 R2}
	\renewcommand{\arraystretch}{1.1}
	\centering
	\begin{tabular}{|p{3cm}|p{3.6cm}|}
		\hline
		Zeros of $\mathcal{P}_{7}(x)$& Zeros of $\mathcal{P}_{7}(x;1.2)$\\
		\hline
		0.2389794289 & 0.2720925666 \\
		\hline
		0.5107520351 & 0.6671359684\\  
		\hline
		0.7737829160 & 0.8062631388\\
		\hline
		1.053542176 & 1.111244050\\
		\hline
		1.437251708 & 1.705005737\\
		\hline
		2.019702291 & 2.045309617\\
		\hline
		2.953906046 & 4.019126394\\
		\hline
	\end{tabular}
	%\end{table}
\end{wraptable}
\begin{remark}
Hence, it is established that the zeros of CRR- polynomials and co-recursive CRR polynomials interlace. Moreover, it can be seen that zeros of co-recursive CRR polynomials exhibit interlacing in themselves. However, for large perturbations, some information may be lost.
\end{remark} 

\begin{corollary}
Denoting by $\mathcal{P}_{n}(x;\mu_k,\mu_{k+1})$, the polynomials obtained on perturbing two consecutive recurrence coefficients and let $x^{(n)}_j(\mu_k,\mu_{k+1})$, $j=1,2,\ldots,n$ be the corresponding zeros. With $\mu_k>0$ and $\mu_{k+1}>0$, $j$ fixed and $n \geq k$, $x^{(n)}_j(\mu_k,\mu_{k+1})$ are strictly increasing function of $\mu_k$ and $\mu_{k+1}$. Analogously, whenever $\mu_k<0$ and $\mu_{k+1}<0$, the zeros of $\mathcal{P}_{n}(x;\mu_k,\mu_{k+1})$ are strictly decreasing function of $\mu_k$ and $\mu_{k+1}$.
\end{corollary}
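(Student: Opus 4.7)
The plan is to establish strict monotonicity of each zero $x_j^{(n)}(\mu_k,\mu_{k+1})$ in the two parameters separately by reducing to the single-parameter setup of \Cref{interlacing theorem mu R2 pp} via a two-step perturbation. The key observation is that both $\mathcal{P}_n(x;0,\mu_{k+1})$ and $\mathcal{P}_n(x;\mu_k,0)$ are again $R_{II}$-type sequences (with the positive chain sequence $\{\lambda_{n+1}\}$ unchanged), so the Casoratti-determinant machinery of \Cref{Co-polynomials of R2 type} applies when we perturb either of them by a single additional co-recursive parameter.

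For monotonicity in $\mu_k$ with $\mu_{k+1}$ held fixed, introduce the base sequence $\widehat{\mathcal{P}}_n(x):=\mathcal{P}_n(x;0,\mu_{k+1})$. It satisfies \eqref{special R2} with $c_{k+1}$ replaced by $c_{k+1}+\mu_{k+1}$, so by Favard theorem \cite[Theorem 3.5]{Esmail masson JAT 1995} it is a bona fide $R_{II}$ sequence, and $\widehat{\mathcal{P}}_n=\mathcal{P}_n$ for $n\le k+1$; in particular $\widehat{\mathcal{P}}_k=\mathcal{P}_k$. The doubly-perturbed polynomial $\mathcal{P}_n(x;\mu_k,\mu_{k+1})$ is precisely the single co-recursive perturbation at level $k$ of $\{\widehat{\mathcal{P}}_n\}$. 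Repeating the computation that produced \eqref{D(P_n P_mu R2 pp)}, but with $\widehat{\mathcal{P}}_n$ in place of $\mathcal{P}_n$, yields
\begin{align*}
D\bigl(\widehat{\mathcal{P}}_n(x),\mathcal{P}_n(x;\mu_k,\mu_{k+1})\bigr)=-\mu_k\Bigl(\prod_{j=k}^{n}\lambda_j\Bigr)(x^2+1)^{n-k}\mathcal{P}_k^{\,2}(x).
\end{align*}
The sign analysis from the proof of \Cref{interlacing theorem mu R2 pp} then applies verbatim: at each non-shared zero $x_j^{(n)}(\mu_k,\mu_{k+1})$ the sign of $\mathcal{P}_n(x;\mu_k,\mu_{k+1})$ evaluated against the unperturbed $\widehat{\mathcal{P}}_n$ is dictated by the sign of $\mu_k$ via the factor $-\mu_k\,\mathcal{P}_k^{\,2}(x)$, which is already nonnegative in $\mathcal{P}_k^{\,2}(x)$. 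Combined with the implicit function theorem---justified because zeros of the perturbed $R_{II}$ recurrence are still real and simple by \Cref{Ranga zeros interlacing R2}---this gives
\begin{align*}
\frac{\partial x_j^{(n)}}{\partial\mu_k}=-\frac{\partial_{\mu_k}\mathcal{P}_n(x_j^{(n)};\mu_k,\mu_{k+1})}{\partial_x\mathcal{P}_n(x_j^{(n)};\mu_k,\mu_{k+1})}>0
\end{align*}
for $\mu_k>0$, with the reverse inequality for $\mu_k<0$ yielding the claimed strict monotonicity.

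The monotonicity in $\mu_{k+1}$ follows by an entirely symmetric argument in which the base sequence is $\mathcal{P}_n(x;\mu_k,0)$ (again an $R_{II}$ sequence with modified coefficient $c_k\mapsto c_k+\mu_k$) and the single co-recursive perturbation is now at level $k+1$; the corresponding Casoratti identity then carries the factor $\mathcal{P}_{k+1}^{\,2}(x;\mu_k,0)$ in place of $\mathcal{P}_k^{\,2}(x)$, and the sign analysis is identical. The principal obstacle is the bookkeeping of common zeros between $\mathcal{P}_n(x;\mu_k,\mu_{k+1})$ and its base sequence---namely the zeros of $\mathcal{P}_k(x)$ in the first reduction and of $\mathcal{P}_{k+1}(x;\mu_k,0)$ in the second, by the analogue of \Cref{corollary zeros sharing R2 pp}---which remain stationary under the corresponding parameter and at which the Casoratti identity degenerates to $0=0$. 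These must be excluded from the implicit-differentiation argument, but monotonicity is vacuous at such stationary zeros, so the conclusion is unaffected.
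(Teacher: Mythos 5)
The paper states this corollary without any proof at all---it is followed only by numerical illustrations (Tables \ref{T5 R2}--\ref{T6 R2} and Figures \ref{Fig5}--\ref{Fig6})---so there is no argument of the authors' to compare yours against; your proposal has to stand on its own. Its core is sound: the reduction of the two-parameter perturbation to two single co-recursive perturbations of shifted base sequences is correct, both base sequences $\mathcal{P}_n(x;0,\mu_{k+1})$ and $\mathcal{P}_n(x;\mu_k,0)$ are again of the special form \eqref{special R2} with real $c$'s and the same positive chain sequence (so \Cref{Ranga zeros interlacing R2} and the machinery behind \Cref{interlacing theorem mu R2 pp} apply to them), and your Casoratti identities, including the replacement of $\mathcal{P}_k^2$ by $\mathcal{P}_{k+1}^2(x;\mu_k,0)$ in the second reduction, check out. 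Your caveat about the shared zeros (of $\mathcal{P}_k$, resp.\ of $\mathcal{P}_{k+1}(\cdot;\mu_k)$) being stationary is also correct, and in fact shows the corollary as literally stated (all $j=1,\dots,n$) cannot hold with strictness.

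The one genuine gap is the last step. Comparing $\mathcal{P}_n(\cdot;\mu_k,\mu_{k+1})$ with the base at $\mu_k=0$ only shows that the zeros have moved to the right of where they were at $\mu_k=0$; it does not by itself give strict monotonicity on all of $(0,\infty)$, and your implicit-function-theorem inequality is asserted rather than derived---the sign of $-\partial_{\mu_k}\mathcal{P}_n/\partial_x\mathcal{P}_n$ does not follow ``verbatim'' from the interlacing computation. Two standard repairs: either compare two arbitrary values $\mu_k<\mu_k'$ directly, noting that $D\bigl(\mathcal{P}_n(\cdot;\mu_k,\mu_{k+1}),\mathcal{P}_n(\cdot;\mu_k',\mu_{k+1})\bigr)=-(\mu_k'-\mu_k)\prod_j\lambda_j\,(x^2+1)^{n-k}\mathcal{P}_k^2(x)$ and that the first sequence is itself a valid special-$R_{II}$ sequence to which \Cref{Ranga zeros interlacing R2} applies; or differentiate the Casoratti identity in $\mu_k$, evaluate at a zero $x_j$ of $\mathcal{P}_n(\cdot;\mu_k,\mu_{k+1})$ to get $\operatorname{sign}\bigl(\partial_{\mu_k}\mathcal{P}_n(x_j)\bigr)=(-1)^{j}$ from $\operatorname{sign}\bigl(\mathcal{P}_{n+1}(x_j)\bigr)=(-1)^{j}$, and combine with $\operatorname{sign}\bigl(\partial_x\mathcal{P}_n(x_j)\bigr)=(-1)^{j-1}$ (positive leading coefficient by \Cref{Ranga leading coefficient}). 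Either completion makes your argument rigorous for the non-stationary zeros.
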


%\begin{proof}
%	Use of \Cref{interlacing theorem mu R2 pp} and \Cref{corollary zeros sharing R2 pp} implies the result.
%\end{proof}
To illustrate the above corollary, recurrence relation satisfied by CRR polynomials is taken into consideration again.

\textbf{Case I:} $\mu_k > 0$ and $\mu_{k+1}>0$. \par 
The zeros of $\mathcal{P}_{6}(x;0.3,0.4)$ using perturbations $c_3 \rightarrow c_3+0.3$ and $c_4 \rightarrow c_4+0.4$ are listed in \Cref{T5 R2}. These zeros are represented by blue circles in \Cref{Fig5}. Now, take $\mu'_k > \mu_k$ and $\mu'_{k+1} > \mu_{k+1}$. The zeros of $\mathcal{P}_{6}(x;0.5,0.6)$ are plotted with green squares. Finally, red diamonds represent the zeros of unperturbed polynomial $\mathcal{P}_{6}(x)$. %The following point plot states the validity of our result for this case (see \Cref{T5 R2}).
\begin{figure}[H]
	\includegraphics[scale=0.8]{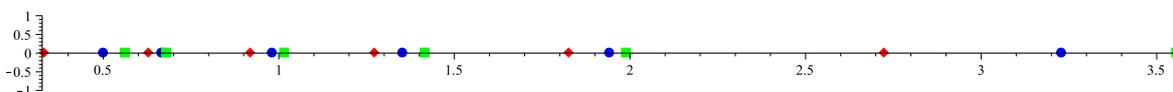}
	\caption{Zeros of $\mathcal{P}_{6}(x;0.3,0.4)$ (blue circles), $\mathcal{P}_{6}(x;0.5,0.6)$ (green squares) and $\mathcal{P}_{6}(x)$ (red diamonds).}
	\label{Fig5}
\end{figure}
\begin{table}[h]
\caption{}
\renewcommand{\arraystretch}{1.2}
	\label{T5 R2}
	\centering
	\begin{tabular}{|p{3cm}|p{4.3cm}|p{4.3cm}|}
		\hline
		Zeros of $\mathcal{P}_{6}(x)$& Zeros of $\mathcal{P}_{6}(x;0.3,0.4)$&Zeros of $\mathcal{P}_{6}(x;0.5,0.6)$\\
		\hline
		 0.3324095627 & 0.5005414531  & 0.5631868840 \\
		\hline
		 0.6295725714 & 0.6667100353  & 0.6803702451 \\  
		\hline
		0.9197511115 & 0.9815128474   & 1.016823459 \\
		\hline
		 1.273243623 & 1.352930479 & 1.416833556\\
		\hline
		1.826806110 & 1.942194994 & 1.990311187\\
		\hline
		2.724441863 & 3.229291555 & 3.556960381\\
		\hline
	\end{tabular}
\end{table}

\textbf{Case II:} $\mu_k < 0$ and $\mu_{k+1}<0$. \par
The zeros of initial CRR polynomial $\mathcal{P}_{6}(x)$ (red diamonds) and perturbed ones i.e. $\mathcal{P}_{6}(x;\break -0.2,-0.3)$ (blue circles) and $\mathcal{P}_{6}(x;-0.4,-0.5)$ (green squares) are given in \Cref{T6 R2} and plotted in \Cref{Fig6}. Observe that the result holds in this case as well.
\begin{table}[h]
	\caption{}
	\renewcommand{\arraystretch}{1.2}
	\label{T6 R2}
	\centering
	\begin{tabular}{|p{3cm}|p{4.8cm}|p{4.8cm}|}
		\hline
		Zeros of $\mathcal{P}_{6}(x)$& Zeros of $\mathcal{P}_{6}(x;-0.2,-0.3)$&Zeros of $\mathcal{P}_{6}(x;-0.4,-0.5)$\\
		 \hline
		 0.3324095627 & .1599408957  & 0.001082138805 \\
		 \hline
		 0.6295725714 & 0.5840010301  & 0.5193524908 \\  
		 \hline
		 0.9197511115 & .8799756728   & 0.8456227563 \\
		 \hline
		 1.273243623 & 1.232311882 & 1.207028405\\
		 \hline
		 1.826806110 & 1.700534290 & 1.583284261\\
		 \hline
		 2.724441863 & 2.458156724 & 2.307246095\\
		\hline
	\end{tabular}
\end{table}
\begin{figure}[H]
	\includegraphics[scale=0.8]{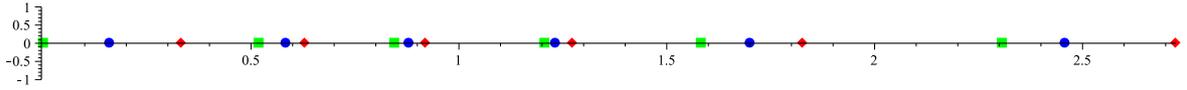}
	\caption{Zeros of $\mathcal{P}_{6}(x;-0.2,-0.3)$ (blue circles), $\mathcal{P}_{6}(x;-0.4,-0.5)$ (green squares) and $\mathcal{P}_{6}(x)$ (red diamonds).}
	\label{Fig6}
\end{figure}
\noindent \textbf{Some important observations:}  Based on the numerical experiments with several examples of $R_{II}$ polynomials given in the literature, the following has been observed:\\
{\rm(1)} It is shown that zeros of the co-recursive $R_{II}$ polynomials and their unperturbed ones have nice interlacing and monotonicity properties. However, these may not hold in the case of co-dilated and co-modified polynomials.
\begin{example}
	For the purpose of illustration, we would recall the recurrence relation \eqref{Example 1 R2 recurrence} discussed in \Cref{example 1 R2 paper}.
\end{example}

\begin{wraptable}{r}{8.5cm}
	\caption{}\label{T7 R2}
	\renewcommand{\arraystretch}{1.1}
	\centering
	\begin{tabular}{|p{3cm}|p{4cm}|}
		\hline
		Zeros of $\mathcal{P}_{9}(x)$& Zeros of $\mathcal{P}_{9}(x;0,0.6)$\\
		\hline
		0 & 0 \\
		\hline
		 $-$3.077683537 & $-$2.428062818 \\  
		\hline
		 $-$1.376381920 & $-$1.248215157 \\
		\hline
		 $-$0.7265425280&$-$.7432443609\\
		\hline
		$-$0.3249196962 &$-$.2950948155 \\
		\hline
		0.3249196962& .2950948155 \\
		\hline
		0.7265425280 & .7432443609 \\
		\hline
		1.376381920 & 1.248215157 \\
		\hline
		3.077683537 & 2.428062818\\
		\hline
	\end{tabular}
	%\end{table}
\end{wraptable}
The zeros of $\mathcal{P}_{9}(x)$ and $\mathcal{P}_{9}(x;0,0.6)$ are listed in \Cref{T7 R2} when a perturbation
$\lambda_{3} \rightarrow 0.6 \times \lambda_{3} $ is made. Clearly, the zeros do not show any kind of interlacing for this case (see \Cref{Fig7}). Similarly, one can check that co-modified $R_{II}$ polynomials too do not have such relation with the unperturbed ones.\par 

\noindent \textbf{Note:} This observation raises a question that what conditions to be imposed on $\mu_k$'s and $\nu_k$'s such that the co-dilated (or co-modified) $R_{II}$ polynomials do show some interlacing with the initial ones. Although, this aspect is not fully discussed in this manuscript and is still open.

\begin{figure}[!htb]
	\includegraphics[scale=0.8]{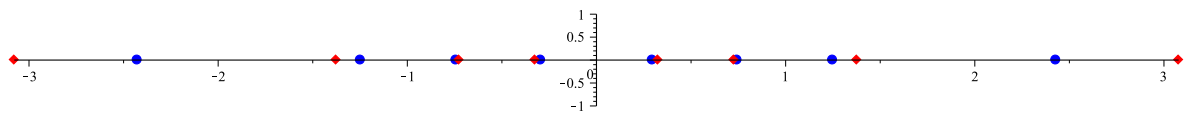}
	\caption{Zeros of $\mathcal{P}_{9}(x)$ (red diamonds) and $\mathcal{P}_{9}(x; \nu_3=0.6)$ (blue circles).}
	\label{Fig7}
\end{figure}

\noindent{\rm(2)} For the co-dilated case, it is found that when $0 < \nu_k < 1$, the zeros of $\mathcal{P}_{n}(x;\nu_{k})$, $\mathcal{P}_{n}(x)$, $\mathcal{P}_{k}(x)$ and $\mathcal{P}_{k-1}(x)$ are interlaced in a special manner i.e., whenever there are two consecutive zeros of $\mathcal{P}_{n}(x;\nu_{k})$ (see \Cref{Fig7}), there lies at least one zero of $\mathcal{P}_{k}(x)$ between them and further between two consecutive zeros of $\mathcal{P}_{n}(x)$, there is a zero of $\mathcal{P}_{k-1}(x)$ (see \Cref{Fig8,Fig9,Fig10}). However, the converse of the above fact need not true, i.e., the zeros of $\mathcal{P}_{k}(x)$ not necessarily be present only between consecutive zeros of $\mathcal{P}_{n}(x;\nu_{k})$ (see \Cref{Fig9}). This fact is numerically verified for different values of $\nu_k$ and at various levels of perturbation in recurrence relation \eqref{Example 1 R2 recurrence}, recurrence relation satisfied by CRR polynomials (\Cref{Example 2 CRR R2 pp}) and the Chebyshev polynomials of $R_{II}$ type. The Chebyshev polynomials of $R_{II}$ type \cite{Esmail masson JAT 1995} satisfy
\begin{align*}
\mathcal{P}_{n+1}(z) = (z-\sqrt{ab})\mathcal{P}_n(z)-\frac{1}{4}(z-a)(z-b)\mathcal{P}_{n-1}(z), \quad n\geq 0, ~ a,b>0.
\end{align*}
\begin{figure}[!htb]
\includegraphics[scale=0.8]{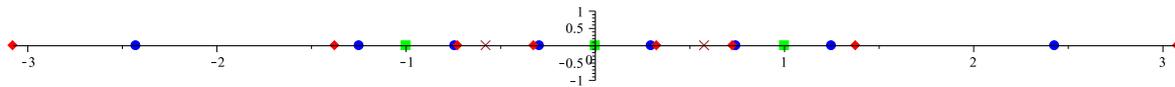}
\caption{Zeros of $\mathcal{P}_{9}(x)$ (red diamonds) obtained from \eqref{Example 1 R2 recurrence}, $\mathcal{P}_{9}(x;\nu_3=0.6)$ (blue circles), $\mathcal{P}_{3}(x)$ (green squares) and $\mathcal{P}_{2}(x)$ (brown cross).}
\label{Fig8}
\end{figure}

\begin{figure}[!htb]
	\includegraphics[scale=0.8]{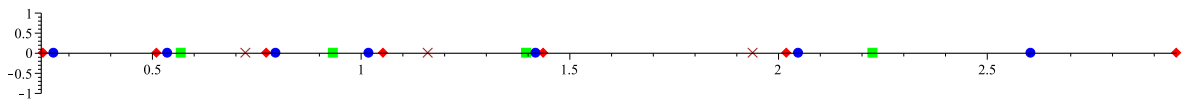}
	\caption{Zeros of CRR polynomials: $\mathcal{P}_{7}(x)$ (red diamonds), $\mathcal{P}_{7}(x;\nu_4=0.5)$ (blue circles), $\mathcal{P}_{4}(x)$ (green squares) and $\mathcal{P}_{3}(x)$ (brown cross).}
	\label{Fig9}
\end{figure}

\begin{figure}[!htb]
	\includegraphics[scale=0.8]{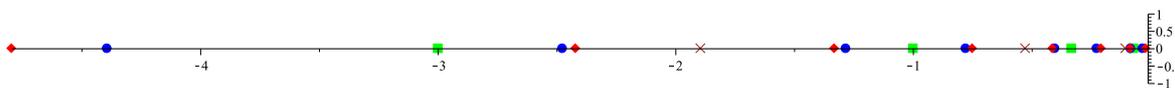}
	\caption{Zeros of Chebyshev polynomials of $R_{II}$ type with parameters $a=b=1$: $\mathcal{P}_{10}(x)$ (red diamonds), $\mathcal{P}_{10}(x;\nu_5=0.7)$ (blue circles), $\mathcal{P}_{5}(x)$ (green squares) and $\mathcal{P}_{4}(x)$ (brown cross). Note that largest zero of each polynomial is not represented in figure above for sake of clarity.}
	\label{Fig10}
\end{figure}

\noindent{\rm(3)} Let $y_{k,j}$, $j=1,2,\ldots,k+1$ be the zeros of $\mathcal{S}_{k}(x)$ defined in \Cref{Theorem s_k_x R2 pp}. With $\mu_k \neq 0$, $\nu_k \neq 1$ and $c=\frac{\nu_k-1}{\mu_k}$, it attracts our interest to look at interlacing between zeros of $\mathcal{S}_{k}(x)$, $\mathcal{P}_{k}(x)$ and $\mathcal{P}_{k-1}(x)$. If $x_{k,j}$, $j=1,2,\ldots,k$ and $x_{k-1,j}$, $j=1,2,\ldots,k-1$ are the zeros of $\mathcal{P}_{k}(x)$ and $\mathcal{P}_{k-1}(x)$ respectively arranged in increasing order, then, for $c > 0$ on $\mathbb{R}\backslash [-\infty,x_{k,1}]$, we have
\begin{align*}
x_{k-1,1}<y_{k,3}<x_{k,2}< \ldots < x_{k-1,k-1}<y_{k,k+1}<x_{k,k}.
\end{align*}
\begin{figure}[H]
	\includegraphics[scale=0.8]{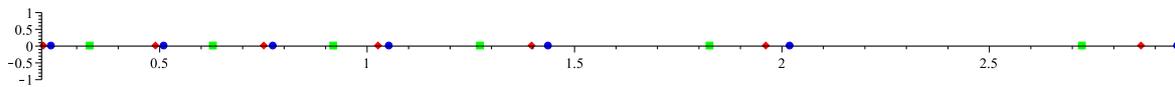}
	\caption{Zeros of $\mathcal{S}_{7}(x)$ (red diamonds) when $\mu_7=0.4$ and $\nu_7=1.2$, the CRR polynomials $\mathcal{P}_{7}(x)$ (blue circles) and $\mathcal{P}_{6}(x)$ (green squares).}
	\label{Fig11}
\end{figure}
Furthermore, when $c < 0$, then on $\mathbb{R}\backslash [x_{k,k},\infty]$, we have
\begin{align*}
	x_{k,1}<y_{k,1}<x_{k-1,1}< \ldots < x_{k,k-1}<y_{k,k-1}<x_{k-1,k-1}.
\end{align*}
\begin{figure}[H]
	\includegraphics[scale=0.8]{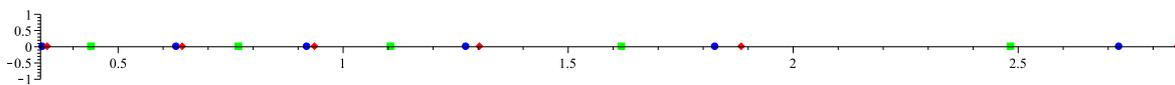}
	\caption{Zeros of $\mathcal{S}_{6}(x)$ (red diamonds) when $\mu_6=0.6$ and $\nu_6=0.8$, the CRR polynomials $\mathcal{P}_{6}(x)$ (blue circles) and $\mathcal{P}_{5}(x)$ (green squares).}
	\label{Fig12}
\end{figure}
It is possible that for some values of $\mu_k$ and $\nu_k$, $\mathcal{S}_{k}(x)$ may have complex zeros. We have avoided this situation while making above conclusions. Note that a zero of $\mathcal{S}_{7}(x)$ lying far on left side of $x_{k,1}$ (\Cref{Fig11}) and a zero of $\mathcal{S}_{6}(x)$ lying far on right side of $x_{k,k}$ (\Cref{Fig12} are omitted for pictorial clarity.

\section{A transfer matrix approach for finite perturbations}\label{A transfer matrix approach R2 pp}
Theorem \ref{Theorem s_k_x R2 pp} has some restrictions. At first, the structural relation is not useful if one is interested in finite composition of perturbations from computational point of view. Further, the structural relation doesn't hold for the entire real line. The motivation of this section is to use a transfer matrix approach to tackle these restrictions. Using this approach, the co-polynomials on the real \cite{Castillo co-polynomials on real line 2015}, perturbed Szeg\"o recurrence \cite{Castillo perturbed szego 2014}, $g$-fraction with missing terms \cite{KKB CMFT 2018} and perturbed $R_I$ recurrence \cite{swami vinay 2021} has been studied.\par

Using matrix notation \eqref{P_n+1 to P_0 R2 pp}, we have
\begin{align}\label{P_n+1 mu_k nu_k R2 pp}
\mathbb{P}_{n+1}(x;\mu_k,\nu_k) &= (\mathbf{T}_n \ldots \mathbf{T}_{k+1})\mathbf{T}_k(\mu_k,\nu_k)(\mathbf{T}_{k-1} \ldots \mathbf{T}_0) \mathbb{P}_{0},
\end{align}
where 
\begin{align*}
\mathbf{T}_k (\mu_k,\nu_k) & = \begin{bmatrix}
	x-c_k-\mu_k & -\nu_k \lambda_k (x^2+1) \\
	1 & 0	
\end{bmatrix},
\end{align*}
Using $\eqref{P_n+1 to P_0 R2 pp}$ and $\eqref{P_n+1 mu_k nu_k R2 pp}$, we deduce that following relation holds in $\mathbb{R}$. 
\begin{align}\label{P_n+1_mu_nu to P_n+1 R2 pp}
\mathbb{P}_{n+1}(x;\mu_k,\nu_k)=(\mathbf{T}_n \ldots \mathbf{T}_{k+1})\mathbf{T}_k(\mu_k,\nu_k)\mathbf{T}^{-1}_k(\mathbf{T}_n \ldots \mathbf{T}_{k+1})^{-1}\mathbb{P}_{n+1}(x),
\end{align}
The expression \eqref{P_n+1_mu_nu to P_n+1 R2 pp} has a computational edge as compared to \Cref{Theorem s_k_x R2 pp} and it holds in $\mathbb{R}$. Further, \eqref{P_n+1_mu_nu to P_n+1 R2 pp} can be improved using sequence of associated polynomials.\par
Note that the so called associated polynomials of second kind $\{\mathcal{V}_n\}$ too satisfy recurrence relation \ref{R2} with initial conditions $\mathcal{V}_0 = 0$ and $\mathcal{V}_1 = 1$. Note that $\mathcal{Q}_n = (1-\lambda_{n})^{-1}\mathcal{V}_n $ is a monic polynomial of degree $n-1$ \Cref{Ranga leading coefficient}. In the sequel, we follow the analysis given in \cite{Castillo co-polynomials on real line 2015}. \par
\begin{theorem}\label{transfer matrix theorem R2 pp}
The following relation hold in $\mathbb{R}$:
\begin{align*}
\prod_{j=1}^{k}\lambda_j (x^2+1)^k \begin{bmatrix}
	\mathcal{P}_{n+1}(x;\mu_k,\nu_k)	\\
	-\mathcal{Q}_{n+1}(x;\mu_k,\nu_k)
\end{bmatrix} & = \mathbf{N}_k \begin{bmatrix}
	\mathcal{P}_{n+1}(x)	\\
	-\mathcal{Q}_{n+1}(x)
\end{bmatrix},
\end{align*}
where $\mathbf{N}_k$ is
\begin{align*}
\mathbf{N}_k = \begin{bmatrix}
\displaystyle\prod_{j=1}^{k}\lambda_j (x^2+1)^k+\mathcal{S}_k\mathcal{Q}_k & \mathcal{S}_k\mathcal{P}_k	\\
\mathcal{Q}_k\hat{\mathcal{S}}_k & \hat{\mathcal{S}}_k \mathcal{P}_k +\displaystyle\prod_{j=1}^{k}\lambda_j (x^2+1)^k
\end{bmatrix},
\end{align*}
with 
\begin{align*}
\hat{\mathcal{S}}_k(x) &= -\mu_ k \mathcal{Q}_k(x)-(\nu_k-1)\lambda_k(x^2+1)\mathcal{Q}_{k-1}(x), \\
\mathcal{S}_k(x) &=\mu_k \mathcal{P}_k(x)+(\nu_k-1)\lambda_k (x^2+1) \mathcal{P}_{k-1}(x).
\end{align*}

\end{theorem}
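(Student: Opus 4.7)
The plan is to use the transfer matrix framework of \eqref{P_n+1_mu_nu to P_n+1 R2 pp} augmented with the polynomials of the second kind, so that the relevant $2\times 2$ matrices become generically invertible. First, I would introduce
\begin{align*}
\mathbb{W}_n(x) := \begin{bmatrix} \mathcal{P}_{n+1}(x) & -\mathcal{Q}_{n+1}(x) \\ \mathcal{P}_n(x) & -\mathcal{Q}_n(x) \end{bmatrix}
\end{align*}
together with its perturbed analogue $\mathbb{W}_n^*(x)$ (the same matrix with $\mathcal{P}_j(x;\mu_k,\nu_k)$, $\mathcal{Q}_j(x;\mu_k,\nu_k)$ in the entries), check that $\mathbb{W}_n = \mathbf{T}_n\mathbb{W}_{n-1}$, and iterate the Casoratti-type relation \eqref{D of P_n P_{n-k} R2 pp} applied to the pair $(\mathcal{P}_n,\mathcal{Q}_n)$ to obtain the Wronskian identity
\begin{align*}
\Delta_k := \det \mathbb{W}_k = \mathcal{P}_k\mathcal{Q}_{k+1} - \mathcal{P}_{k+1}\mathcal{Q}_k = \prod_{j=1}^{k}\lambda_j (x^2+1)^k,
\end{align*}
which is precisely the prefactor appearing on the left of the target identity.

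Next, I would exploit that the perturbation is localised at level $k$, so that
\begin{align*}
\mathbb{W}_n = (\mathbf{T}_n\cdots\mathbf{T}_{k+1})\,\mathbf{T}_k\,\mathbb{W}_{k-1}, \qquad \mathbb{W}_n^* = (\mathbf{T}_n\cdots\mathbf{T}_{k+1})\,\mathbf{T}_k(\mu_k,\nu_k)\,\mathbb{W}_{k-1}.
\end{align*}
Subtracting yields
\begin{align*}
\mathbb{W}_n^* - \mathbb{W}_n = (\mathbf{T}_n\cdots\mathbf{T}_{k+1})\bigl[\mathbf{T}_k(\mu_k,\nu_k) - \mathbf{T}_k\bigr]\mathbb{W}_{k-1},
\end{align*}
and the bracketed matrix is rank one, with vanishing second row and first row $(-\mu_k,\,-(\nu_k-1)\lambda_k(x^2+1))$. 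Multiplying on the right by $\mathbb{W}_{k-1}$ produces a matrix whose second row is zero and whose first row is exactly $(-\mathcal{S}_k,\,-\hat{\mathcal{S}}_k)$, by the definitions of $\mathcal{S}_k$ and $\hat{\mathcal{S}}_k$ in the statement.

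To finish, I will substitute the telescoping identity $\mathbf{T}_n\cdots\mathbf{T}_{k+1} = \mathbb{W}_n\mathbb{W}_k^{-1}$ and expand $\mathbb{W}_k^{-1} = \Delta_k^{-1}\operatorname{adj}(\mathbb{W}_k)$ via Cramer's rule. A short $2\times 2$ multiplication then yields
\begin{align*}
\Delta_k\,\mathbb{W}_n^* = \mathbb{W}_n \begin{bmatrix} \Delta_k + \mathcal{Q}_k\mathcal{S}_k & \mathcal{Q}_k\hat{\mathcal{S}}_k \\ \mathcal{P}_k\mathcal{S}_k & \Delta_k + \mathcal{P}_k\hat{\mathcal{S}}_k \end{bmatrix} = \mathbb{W}_n\,\mathbf{N}_k^{\top},
\end{align*}
so that reading off the top row and transposing produces the claimed identity for the column vector with entries $\mathcal{P}_{n+1}$ and $-\mathcal{Q}_{n+1}$. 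The hardest part will be the left/right bookkeeping: the transfer matrices act naturally by left multiplication, and what emerges on the right of $\mathbb{W}_n$ in the computation is $\mathbf{N}_k^{\top}$, not $\mathbf{N}_k$; one has to verify carefully that this is consistent with the way $\mathbf{N}_k$ is written on the left of a column vector in the statement, which can be cross-checked against the direct ansatz $\mathcal{P}_n^* = A(x)\mathcal{P}_n + B(x)\mathcal{Q}_n$ for $n \geq k$, with $A$, $B$ fixed by matching at $n=k$ and $n=k+1$ and solved using the same Wronskian $\Delta_k$. Finally, the identity, initially valid off the zero set of $\mathbb{W}_k$, extends to all of $\mathbb{R}$ by polynomial continuity.
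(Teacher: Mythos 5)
Your proposal is correct, and it lives in the same transfer-matrix framework as the paper's proof: both set up the second-kind matrix $\mathbb{F}_{n+1}=\bigl[\begin{smallmatrix}\mathcal{P}_{n+1} & -\mathcal{Q}_{n+1}\\ \mathcal{P}_{n} & -\mathcal{Q}_{n}\end{smallmatrix}\bigr]$ (your $\mathbb{W}_n$), compute its determinant $\Delta_k=\prod_{j=1}^{k}\lambda_j(x^2+1)^k$ from $\det\mathbf{T}_j=\lambda_j(x^2+1)$, and exploit that the perturbation is localised in the single factor at level $k$. Where you genuinely differ is in how $\mathbf{N}_k$ is extracted. The paper forms the conjugated product $[\mathbf{T}_k(\mu_k,\nu_k)\mathbb{F}_k]^{T}(\mathbf{T}_k\mathbb{F}_k)^{-T}$ and evaluates all four entries one by one, invoking the recurrence to recognise $\mathcal{S}_k$ and $\hat{\mathcal{S}}_k$ in each. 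You instead take the additive route: $\mathbf{T}_k(\mu_k,\nu_k)-\mathbf{T}_k$ has vanishing second row, so right-multiplying by $\mathbb{W}_{k-1}$ gives a rank-one matrix with first row $(-\mathcal{S}_k,-\hat{\mathcal{S}}_k)$ directly from the definitions, and $\mathbf{N}_k$ then appears as $\Delta_k I$ plus $\operatorname{adj}(\mathbb{W}_k)$ times this rank-one correction -- no entrywise use of the recurrence is needed. This is a cleaner organisation of the same computation and makes the structure of $\mathbf{N}_k$ (identity plus rank-adjusted perturbation) transparent; the paper's version, by contrast, passes through the associated polynomials $\mathbb{F}^{(k+1)}_{n-(k+1)}$, which is useful context for the later corollaries on composed perturbations. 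Your transpose bookkeeping is right: $\Delta_k\mathbb{W}_n^{*}=\mathbb{W}_n\mathbf{N}_k^{\top}$ transposes to $\Delta_k(\mathbb{W}_n^{*})^{\top}=\mathbf{N}_k\mathbb{W}_n^{\top}$, and the first columns give exactly the stated identity; the final extension to all of $\mathbb{R}$ by polynomial continuity is also fine.
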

\begin{proof}
Let us introduce the matrix $\mathbb{F}_{n+1}$ given by
\begin{align*}
\mathbb{F}_{n+1} & =	\begin{bmatrix}
	\mathcal{P}_{n+1}	& -\mathcal{Q}_{n+1}\\
	\mathcal{P}_{n}	& -\mathcal{Q}_{n}
\end{bmatrix} = \mathbf{T}_n \mathbb{F}_{n}. 
%& = A_n \mathbb{B}_{n} \\
%& = \begin{bmatrix}
%	x-c_n & -\lambda_n (x-a_n)(x-b_n) \\
%	1 & 0
%\end{bmatrix} \begin{bmatrix}
%	\mathcal{P}_{n}	& -\mathcal{Q}_{n}\\
%	\mathcal{P}_{n-1}	& -\mathcal{Q}_{n-1}
%\end{bmatrix} \nonumber 
\end{align*}
\begin{align*}
	D(\mathcal{P}_{n+1}, -\mathcal{Q}_{n+1}) & = \lambda_n (x^2+1) det \mathbb{F}_{n} = \prod_{j=1}^{n}\lambda_j (x^2+1)^n. 
\end{align*}
then, $\mathbb{F}_{n+1} $ is non-singular. \par
Clearly, $\mathbb{F}_{n+1} $ can be written as the product of the transfer matrices
\begin{align}\label{F_n+1 to T_0}
	&	\mathbb{F}_{n+1} =  \mathbf{T}_n \mathbb{F}_{n} = \mathbf{T}_n \ldots \mathbf{T}_{k+1}\mathbf{T}_k \mathbf{T}_{k-1} \ldots \mathbf{T}_0.
\end{align}
Let $\mathbb{F}_{n+1}(\mu_k,\nu_k)$ be the polynomial matrices corresponding to co-modified polynomials of $R_{II}$ type and $\mathbb{F}_{n-k}^{(k)}$ be the product of transfer matrix for associated polynomials of order $k$. Recall that associated polynomials of order $k$ are of degree $n-k$.
\begin{align*}
	\mathbb{F}_{n-k}^{(k)} &= \mathbf{T}_n \ldots \mathbf{T}_k,
\end{align*}
then, from \eqref{F_n+1 to T_0}, we have
\begin{align}
	\mathbb{F}_{n-(k+1)}^{(k+1)} &= \mathbf{T}_n \ldots \mathbf{T}_{k+1} \nonumber  \\	
%	& = A_n \ldots A_{k+1} (A_k\mathbb{B}_{k} )(A_k\mathbb{B}_{k} )^{-1} \nonumber  \\
	& = \mathbf{T}_n \ldots \mathbf{T}_{k+1}\mathbf{T}_k \mathbf{T}_{k-1} \ldots \mathbf{T}_0 (\mathbf{T}_k\mathbb{F}_{k} )^{-1} \nonumber  \\
	& = \mathbb{F}_{n+1} (\mathbf{T}_k\mathbb{F}_{k} )^{-1}. \label{F_n-k+1}\\
	\mathbb{F}_{n+1}(\mu_k,\nu_k) &= \mathbf{T}_n \ldots \mathbf{T}_{k+1}\mathbf{T}_k (\mu_k,\nu_k) \mathbf{T}_{k-1} \ldots \mathbf{T}_0 \nonumber   \\
	&= \mathbb{F}_{n-(k+1)}^{(k+1)} \mathbf{T}_k (\mu_k,\nu_k) \mathbb{F}_{k}.  \label{F_n+1 mu nu}
\end{align}
Using \eqref{F_n-k+1} and \eqref{F_n+1 mu nu}, we get
\begin{align}
\mathbb{F}^T_{n+1}(\mu_k,\nu_k) &=[\mathbf{T}_k (\mu_k,\nu_k) \mathbb{F}_{k}]^T [\mathbb{F}_{n+1} (\mathbf{T}_k\mathbb{F}_{k} )^{-1}]^T \nonumber\\
&=[\mathbf{T}_k (\mu_k,\nu_k) \mathbb{F}_{k}]^T (\mathbf{T}_k\mathbb{F}_{k} )^{-T} \mathbb{F}^T_{n+1}, \label{F_n+1 transpose} 
\end{align}
which gives the following
\begin{align}
\mathbf{T}_k (\mu_k,\nu_k) \mathbb{F}_{k} &= \mathbf{T}_k (\mu_k,\nu_k) \mathbf{T}_{k-1} \ldots \mathbf{T}_0 = \mathbb{F}_{k+1}(\mu_k,\nu_k), \nonumber
\end{align}
which means
\begin{align}
\mathbb{F}^T_{k+1}(\mu_k,\nu_k) &= \begin{bmatrix}
	\mathcal{P}_{k+1}(\mu_k,\nu_k)	& \mathcal{P}_{k}\\
	-\mathcal{Q}_{k+1}(\mu_k,\nu_k)	& -\mathcal{Q}_{k}
\end{bmatrix} = [\mathbf{T}_k (\mu_k,\nu_k) \mathbb{F}_{k}]^T. \label{F_k+1 transpose}
\end{align}
Now
\begin{align}
\mathbf{T}_k\mathbb{F}_{k} &= \mathbb{F}_{k+1} = \begin{bmatrix}
	\mathcal{P}_{k+1}	& -\mathcal{Q}_{k+1}\\
	\mathcal{P}_{k}	& -\mathcal{Q}_{k}
\end{bmatrix}, \nonumber
\end{align}
and hence, by determinant formula
\begin{align}
det(\mathbb{F}_{k+1}) &= \prod_{j=1}^{k}\lambda_j (x^2+1)^k= \mathfrak{K}(x), \quad\text{(say)} \nonumber
\end{align}
implies
%	(A_k\mathbb{B}_{k})^{-1} &= \frac{1}{\mathfrak{K}(x)} \begin{bmatrix} %(\mathbb{B}_{k+1})^{-1} =
%		-\mathcal{Q}_{k}	& \mathcal{Q}_{k+1}\\
%		-\mathcal{P}_{k}	& \mathcal{P}_{k+1}
%	\end{bmatrix} \nonumber\\
\begin{align}
(\mathbf{T}_k\mathbb{F}_{k})^{-T} &= \frac{1}{\mathfrak{K}(x)} \begin{bmatrix}
	-\mathcal{Q}_{k}	& -\mathcal{P}_{k} \\
	\mathcal{Q}_{k+1}	& \mathcal{P}_{k+1}
\end{bmatrix}. \label{T_k F_k tranpose inverse}
\end{align}
Using \eqref{F_k+1 transpose} and \eqref{T_k F_k tranpose inverse}, we get
\begin{align}
[\mathbf{T}_k (\mu_k,\nu_k) \mathbb{F}_{k}]^T (\mathbf{T}_k\mathbb{F}_{k} )^{-T} &=  \frac{1}{\mathfrak{K}(x)} \begin{bmatrix}
	\mathcal{P}_{k+1}(\mu_k,\nu_k)	& \mathcal{P}_{k}\\
	-\mathcal{Q}_{k+1}(\mu_k,\nu_k)	& -\mathcal{Q}_{k}
\end{bmatrix} \begin{bmatrix}
	-\mathcal{Q}_{k}	& -\mathcal{P}_{k} \\
	\mathcal{Q}_{k+1}	& \mathcal{P}_{k+1}
\end{bmatrix} \nonumber\\
&= \begin{bmatrix}\label{product matrix R2 pp}
	-\mathcal{P}_{k+1}(\mu_k,\nu_k) \mathcal{Q}_{k}+\mathcal{P}_{k}\mathcal{Q}_{k+1}	& -\mathcal{P}_{k+1}(\mu_k,\nu_k)\mathcal{P}_{k}+\mathcal{P}_{k}\mathcal{P}_{k+1} \\
	\mathcal{Q}_{k+1}(\mu_k,\nu_k)\mathcal{Q}_{k}-\mathcal{Q}_{k}\mathcal{Q}_{k+1}	& \mathcal{Q}_{k+1}(\mu_k,\nu_k)\mathcal{P}_{k}-\mathcal{Q}_{k}\mathcal{P}_{k+1}.
\end{bmatrix}
\end{align}
Now, each entry of the above matrix can be computed one by one as:
\begin{align}
	-\mathcal{P}_{k+1}(\mu_k,\nu_k) \mathcal{Q}_{k}+\mathcal{P}_{k}\mathcal{Q}_{k+1} &= [-(x-c_k-\mu_k)\mathcal{P}_{k} +\nu_k \lambda_k (x^2+1)\mathcal{P}_{k-1}]\mathcal{Q}_{k}+\mathcal{P}_{k}\mathcal{Q}_{k+1} \nonumber\\
	&= -[\mathcal{P}_{k+1}-\mathcal{S}_{k}]\mathcal{Q}_{k}+\mathcal{P}_{k}\mathcal{Q}_{k+1} \nonumber\\
	&= \mathcal{S}_{k}\mathcal{P}_{k}+\mathcal{P}_{k}\mathcal{Q}_{k+1}-\mathcal{Q}_{k}\mathcal{P}_{k+1} \nonumber\\
	&= \mathcal{S}_{k}\mathcal{Q}_{k} + \mathfrak{K}(x),  \nonumber\\
	-\mathcal{P}_{k+1}(\mu_k,\nu_k)\mathcal{P}_{k}+\mathcal{P}_{k}\mathcal{P}_{k+1} &= \mathcal{P}_{k}[\mathcal{P}_{k+1}-\mathcal{P}_{k+1}(\mu_k,\nu_k)] = \mathcal{S}_{k}\mathcal{P}_{k}, \nonumber \\
	\mathcal{Q}_{k+1}(\mu_k,\nu_k)\mathcal{Q}_{k}-\mathcal{Q}_{k}\mathcal{Q}_{k+1} &= \mathcal{Q}_{k} \hat{\mathcal{S}}_{k}, \quad \text{where} \quad \hat{\mathcal{S}}_{k} = -\mu_k \mathcal{Q}_{k}-\lambda_k (\nu_k-1)(x^2+1)\mathcal{Q}_{k-1}, \nonumber\\
	\mathcal{Q}_{k+1}(\mu_k,\nu_k)\mathcal{P}_{k}-\mathcal{Q}_{k}\mathcal{P}_{k+1}  &= [(x-c_k-\mu_k)\mathcal{Q}_{k} -\nu_k \lambda_k (x^2+1)\mathcal{Q}_{k-1}]\mathcal{P}_{k}+\mathcal{Q}_{k}\mathcal{P}_{k+1}\nonumber\\
	& = [\mathcal{Q}_{k+1}+\hat{\mathcal{S}}_{k}]\mathcal{P}_{k}-\mathcal{Q}_{k}\mathcal{P}_{k+1}  = \hat{\mathcal{S}}_{k}\mathcal{P}_{k} +\mathfrak{K}(x). \nonumber
\end{align}
Substituting above four relations reduces \eqref{product matrix R2 pp} to
\begin{align}\label{N_k upon kappa}
	[\mathbf{T}_k (\mu_k,\nu_k) \mathbb{F}_{k}]^T (\mathbf{T}_k\mathbb{F}_{k} )^{-T} &= \frac{\mathbf{N}_k}{\mathfrak{K}(x)} = \frac{\mathbf{N}_k}{\prod_{j=1}^{k}\lambda_j (x^2+1)^k}.
\end{align}
Using \eqref{N_k upon kappa} in \eqref{F_n+1 transpose} gives
\begin{align}\label{N_k into F_n+1}
	\prod_{j=1}^{k}\lambda_j (x^2+1)^k \mathbb{F}^T_{n+1}(\mu_k,\nu_k) &= \mathbf{N}_k \mathbb{F}_{n+1}.
\end{align}
%\begin{align}
%\mathfrak{K}(x) \begin{bmatrix}
%	\mathcal{P}_{n+1}(\mu_k,\nu_k)	& \mathcal{P}_{n}\\
%	-\mathcal{Q}_{n+1}(\mu_k,\nu_k)	& -\mathcal{Q}_{n}
%\end{bmatrix} &= M_k \begin{bmatrix}
%	\mathcal{P}_{n+1}	& \mathcal{P}_{n}\\
%	-\mathcal{Q}_{n+1}	& -\mathcal{Q}_{n}
%\end{bmatrix} \nonumber \\
%&= \begin{bmatrix}
%	a	& b\\
%	c	& d
%\end{bmatrix} 
%\begin{bmatrix}
%	\mathcal{P}_{n+1}	& \mathcal{P}_{n}\\
%	-\mathcal{Q}_{n+1}	& -\mathcal{Q}_{n}
%\end{bmatrix} \nonumber\\
%&=\begin{bmatrix}
%	a\mathcal{P}_{n+1}-b\mathcal{Q}_{n+1}	& ---\\
%	c\mathcal{P}_{n+1}-d\mathcal{Q}_{n+1}	& ---
%\end{bmatrix} \nonumber \\
%\mathfrak{K}(x).\mathcal{P}_{n+1}(\mu_k,\nu_k)&= a\mathcal{P}_{n+1}-b\mathcal{Q}_{n+1} \nonumber\\
%\mathfrak{K}(x). -\mathcal{Q}_{n+1}(\mu_k,\nu_k)&= c\mathcal{P}_{n+1}-d\mathcal{Q}_{n+1}\nonumber 
%\end{align}
After some elementary calculation, the theorem follows from \eqref{N_k into F_n+1}.
%\begin{align}
%	\prod_{j=1}^{k}\lambda_j (x-a_j)(x-b_j) \begin{bmatrix}
%		\mathcal{P}_{n+1}(\mu_k,\nu_k)	\\
%		-\mathcal{Q}_{n+1}(\mu_k,\nu_k)
%	\end{bmatrix} &= \begin{bmatrix}
%		a	& b\\
%		c	& d
%	\end{bmatrix}  \begin{bmatrix}
%		\mathcal{P}_{n+1}	\\
%		-\mathcal{Q}_{n+1}
%	\end{bmatrix} = M_k \begin{bmatrix}
%		\mathcal{P}_{n+1}	\\
%		-\mathcal{Q}_{n+1}
%	\end{bmatrix} 
%\end{align}
\end{proof}
\begin{corollary}
	The following relation holds for $k,m$ being non-negative integers and $m < k$:
\begin{align*}
	\prod_{j=m+1}^{k}\lambda_j (x^2+1)^{k-m} \begin{bmatrix}
		\mathcal{P}_{n+1}(x;\mu_k,\nu_k)	\\
		-\mathcal{Q}_{n+1}(x;\mu_k,\nu_k)
	\end{bmatrix} & = \mathbf{N}_k \mathbf{N}_m^{-1}\begin{bmatrix}
		\mathcal{P}_{n+1}(x;\mu_m,\nu_m)	\\
		-\mathcal{Q}_{n+1}(x;\mu_m,\nu_m).
	\end{bmatrix}
\end{align*}
\end{corollary}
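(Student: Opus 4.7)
The plan is to invoke Theorem \ref{transfer matrix theorem R2 pp} twice, once at the perturbation level $k$ and once at the perturbation level $m$, and then eliminate the common column vector of unperturbed polynomials $[\mathcal{P}_{n+1}(x),\,-\mathcal{Q}_{n+1}(x)]^T$. This is the natural two-level composition obtainable from the transfer-matrix picture, so the corollary is essentially a matrix-algebra identity built on top of the already-proved theorem.

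Concretely, first I write
\begin{align*}
\prod_{j=1}^{m}\lambda_j (x^2+1)^m \begin{bmatrix}\mathcal{P}_{n+1}(x;\mu_m,\nu_m)\\ -\mathcal{Q}_{n+1}(x;\mu_m,\nu_m)\end{bmatrix}
= \mathbf{N}_m \begin{bmatrix}\mathcal{P}_{n+1}(x)\\ -\mathcal{Q}_{n+1}(x)\end{bmatrix},
\end{align*}
and the analogous relation at level $k$. To solve the first for the unperturbed column, I need $\mathbf{N}_m$ to be invertible. Using the factorization from the proof of Theorem \ref{transfer matrix theorem R2 pp}, namely $\mathbf{N}_m = \mathfrak{K}(x)\,\mathbb{F}_{m+1}(\mu_m,\nu_m)^T\,\mathbb{F}_{m+1}^{-T}$ with $\mathfrak{K}(x)=\prod_{j=1}^{m}\lambda_j(x^2+1)^m$, a direct Casoratti-style computation using the recurrence gives $\det \mathbb{F}_{m+1}(\mu_m,\nu_m)=\nu_m\,\mathfrak{K}(x)$, so that $\det \mathbf{N}_m = \nu_m\,\mathfrak{K}(x)^2$. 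Since we work on $\mathbb{R}$ and $\nu_m\neq 0$ by assumption, $\mathbf{N}_m$ is indeed invertible.

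Having inverted $\mathbf{N}_m$, I substitute the resulting expression for $[\mathcal{P}_{n+1}(x),\,-\mathcal{Q}_{n+1}(x)]^T$ into the level-$k$ identity; this replaces the unperturbed column by the $m$-perturbed column multiplied by $\prod_{j=1}^{m}\lambda_j(x^2+1)^m \mathbf{N}_m^{-1}$. Cancelling the common factor $\prod_{j=1}^{m}\lambda_j(x^2+1)^m$ from both sides of the resulting equation leaves exactly $\prod_{j=m+1}^{k}\lambda_j (x^2+1)^{k-m}$ on the left, producing the claimed identity.

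The main (and really the only non-routine) obstacle is the invertibility of $\mathbf{N}_m$, which I dispose of via the determinant computation above; the rest is linear-algebraic bookkeeping. As a sanity check, taking $m=0$ (with the convention $\mathbf{N}_0 = I$ and empty product equal to $1$) should recover Theorem \ref{transfer matrix theorem R2 pp}, which it does.
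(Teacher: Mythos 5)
Your proof is correct and follows essentially the same route as the paper: apply the identity of \Cref{transfer matrix theorem R2 pp} at levels $m$ and $k$, eliminate the common unperturbed column via $\mathbf{N}_m^{-1}$, and cancel the factor $\prod_{j=1}^{m}\lambda_j(x^2+1)^m$. The only difference is that you explicitly verify the invertibility of $\mathbf{N}_m$ through the determinant computation $\det\mathbf{N}_m=\nu_m\,\mathfrak{K}(x)^2$, a point the paper leaves implicit.
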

\begin{proof}
The finite product \eqref{N_k into F_n+1} implies
\begin{align}
\prod_{j=1}^{k}\lambda_j (x^2+1)^k \mathbb{F}^T_{n+1}(\mu_k,\nu_k) &= \mathbf{N}_k \mathbb{F}_{n+1}, \label{1 R2}\\ \mbox{and} \quad \prod_{j=1}^{m}\lambda_j (x^2+1)^m \mathbb{F}^T_{n+1}(\mu_m,\nu_m) &= \mathbf{N}_m \mathbb{F}_{n+1}. \label{2 R2}
\end{align}
Substituting the value of $\mathbb{F}_{n+1}$ from \eqref{2 R2} in \eqref{1 R2}, we get
\begin{align*}
\prod_{j=1}^{k}\lambda_j (x^2+1)^k \mathbb{F}^T_{n+1}(\mu_k,\nu_k) &= \prod_{j=1}^{m}\lambda_j (x^2+1)^m \mathbf{N}_k \mathbf{N}_m^{-1} \mathbb{F}^T_{n+1}(\mu_m,\nu_m), 
\end{align*}
and	hence the result. 
\end{proof}
The next theorem tell us about the finite composition of perturbations.
\begin{theorem}
Let $k,m$ be two fixed non-negative integer numbers with $m < k$. Then for $n > m$, the following relation holds:
\begin{align*}
	\prod_{j=m}^{k}\prod_{l=0}^{j}\lambda_l (x^2+1)^{l} \begin{bmatrix}
		\mathcal{P}_{n+1}(x;;\mu_m,\nu_m,\ldots \mu_k,\nu_k)	\\
		-\mathcal{Q}_{n+1}(x;;\mu_m,\nu_m, \ldots \mu_k,\nu_k)
	\end{bmatrix} & = \prod_{j=m}^{k}\mathbf{N}_j \begin{bmatrix}
		\mathcal{P}_{n+1}(x)	\\
		-\mathcal{Q}_{n+1}(x)
	\end{bmatrix}.
\end{align*}
\end{theorem}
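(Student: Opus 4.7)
The plan is to proceed by induction on $k-m$, the number of perturbation levels. The base case $k = m$ is precisely Theorem~\ref{transfer matrix theorem R2 pp}. The driving observation is that, for each level $j$, the matrix $\mathbf{N}_j$ is built solely from the polynomials $\mathcal{P}_{j-1}, \mathcal{P}_j, \mathcal{Q}_{j-1}, \mathcal{Q}_j$ together with the scalars $\mu_j, \nu_j$, and therefore depends only on the sequence at indices $\le j$; perturbations introduced at levels strictly larger than $j$ do not affect polynomials of degree $\le j$. This decoupling is what lets the iteration go through cleanly.

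For the inductive step, I would assume the identity for perturbations confined to levels $m+1, \ldots, k$ and then apply Theorem~\ref{transfer matrix theorem R2 pp} one more time, with the already-perturbed column $[\mathcal{P}_{n+1}(\,\cdot\,;\mu_{m+1},\nu_{m+1},\ldots,\mu_k,\nu_k),\; -\mathcal{Q}_{n+1}(\,\cdot\,;\mu_{m+1},\nu_{m+1},\ldots,\mu_k,\nu_k)]^T$ playing the role of the reference sequence and $(\mu_m,\nu_m)$ being the fresh perturbation at the smaller index $m$. The observation above guarantees that $\mathcal{S}_m$ and $\hat{\mathcal{S}}_m$, and hence $\mathbf{N}_m$, are formed from the same original polynomials as in Theorem~\ref{transfer matrix theorem R2 pp}, so this extra application yields
\begin{align*}
\Bigl(\prod_{l=1}^{m}\lambda_l (x^2+1)^{m}\Bigr)\!\begin{bmatrix} \mathcal{P}_{n+1}(x;\mu_m,\ldots,\nu_k)\\ -\mathcal{Q}_{n+1}(x;\mu_m,\ldots,\nu_k) \end{bmatrix} = \mathbf{N}_m\!\begin{bmatrix} \mathcal{P}_{n+1}(x;\mu_{m+1},\ldots,\nu_k)\\ -\mathcal{Q}_{n+1}(x;\mu_{m+1},\ldots,\nu_k) \end{bmatrix}.
\end{align*}
Multiplying this identity by the scalar prefactor of the inductive hypothesis and then substituting the right-hand side of the hypothesis consolidates the scalar factors into the full double product over $j \in \{m,\ldots,k\}$ and the matrix factors into $\mathbf{N}_m\,\prod_{j=m+1}^{k}\mathbf{N}_j = \prod_{j=m}^{k}\mathbf{N}_j$, giving the stated formula.

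The argument is essentially a bookkeeping exercise on top of Theorem~\ref{transfer matrix theorem R2 pp}; no new analytic input is needed. The main obstacle I anticipate is the non-commutativity of matrix multiplication: one must keep the factors $\mathbf{N}_m, \mathbf{N}_{m+1}, \ldots, \mathbf{N}_k$ in the correct left-to-right order, matching the order in which the transfer matrices are composed in \eqref{P_n+1 mu_k nu_k R2 pp}. A secondary point to check is that the double product of scalar factors written in the theorem statement collapses to the one $\prod_{j=m}^{k}\prod_{l=1}^{j}\lambda_l(x^2+1)^{j}$ that arises naturally from the successive applications of Theorem~\ref{transfer matrix theorem R2 pp}.
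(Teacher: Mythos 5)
Your proposal is correct and follows essentially the same route as the paper: the paper also peels off one perturbation level at a time (from level $k$ down to level $m$), applies the single-perturbation relation at each stage using the fact that $\mathbf{N}_j$ is built only from data at indices $\le j$ and hence is unaffected by higher-level perturbations, and chains the resulting identities by forward substitution, which is your induction on $k-m$ in different clothing. Your remarks about keeping the factors in the order $\mathbf{N}_m\mathbf{N}_{m+1}\cdots\mathbf{N}_k$ and about the scalar prefactor really being $\prod_{j=m}^{k}\prod_{l=1}^{j}\lambda_l(x^2+1)^{j}$ are both on point (the latter appears to be a typo in the theorem statement).
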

\begin{proof}
Note that $\mathbf{N}_k$ depends on the first $k+1$ original recurrence coefficients and the perturbed $c_k$ and $\lambda_k$, \Cref{N_k into F_n+1} implies
\begin{align*}
\prod_{j=1}^{k}\lambda_j (x^2+1)^k \mathbb{F}^T_{n+1}(\mu_k,\nu_k) &= \mathbf{N}_k \mathbb{F}^T_{n+1}.
\end{align*}
From \eqref{F_n+1 mu nu}, it is easy to show that
\begin{align}\label{3 R2}
\mathbb{F}_{n+1}(\mu_k,\nu_k)[\mathbf{T}_{k-1}\mathbb{F}_{k-1}]^{-1} &= \mathbf{T}_n \ldots \mathbf{T}_{k+1}\mathbf{T}_k (\mu_k,\nu_k). 
\end{align}
The expression for $\mathbb{F}_{n+1}(\mu_k,\nu_k,\mu_{k-1},\nu_{k-1})$ given by
\begin{align}\label{4 R2}
\mathbb{F}_{n+1}(\mu_k,\nu_k,\mu_{k-1},\nu_{k-1}) = \mathbf{T}_n \ldots \mathbf{T}_{k+1}\mathbf{T}_k (\mu_k,\nu_k)\mathbf{T}_{k-1}(\mu_{k-1},\nu_{k-1})\mathbf{T}_{k-2} \ldots \mathbf{T}_{0}.
\end{align}
Substituting \eqref{3 R2} in \eqref{4 R2} and taking transpose on both sides, we get
\begin{align*}
\mathbb{F}_{n+1}^T(\mu_k,\nu_k,\mu_{k-1},\nu_{k-1})=[\mathbf{T}_{k-1}(\mu_{k-1},\nu_{k-1})\mathbb{F}_{k-1}]^T[\mathbf{T}_{k-1}\mathbb{F}_{k-1}]^{-T}\mathbb{F}^T_{n+1}(\mu_k,\nu_k),
\end{align*}
which, using technique discussded in \Cref{A transfer matrix approach R2 pp}, implies
\begin{align*}
\prod_{j=1}^{k-1}\lambda_j (x^2+1)^{k-1} \mathbb{F}^T_{n+1}(\mu_k,\nu_k,\mu_{k-1},\nu_{k-1}) &= \mathbf{N}_{k-1} \mathbb{F}^T_{n+1}(\mu_k,\nu_k).
\end{align*}
In similar fashion, we have
\begin{align*}
\prod_{j=1}^{m}\lambda_j (x^2+1)^{m} \mathbb{F}^T_{n+1}(\mu_k,\nu_k \ldots \mu_{m},\nu_{m}) &= \mathbf{N}_{m} \mathbb{F}^T_{n+1}(\mu_k,\nu_k, \ldots \mu_{m-1},\nu_{m-1}).
\end{align*}
%\mathbbox{Obviously}, \quad \mathbb{F}^T_{n+1}(\mu_k,\nu_k \ldots \mu_{m},\nu_{m}) &= \mathbb{F}^T_{n+1}(\mu_m,\nu_m \ldots \mu_{k},\nu_{k})
%\end{align*}
Thus, a forward substitution argument implies the theorem.
\end{proof}

%\section{A $m-$ function point of view and rational spectral transformation}\label{A $m-$ function point of view}  

%\begin{theorem}
%	Let $m_\alpha(x;\mu_k,\nu_k)$  be the $m-$ function associated with the perturbations \eqref{co-recursive condition R2} and \eqref{co-dilated condition R2}. Then
%\begin{align*}
%m_(x;\mu_k,\nu_k) = \begin{bmatrix}
%\lambda_{k+1}(x^2+1) Q_{k}	&  -Q_{k+1}-\hat{\mathcal{S}}_k\\
% \lambda_{k+1}(x^2+1) P_{k}	&  -P_{k+1}+\mathcal{S}_{k} 
%\end{bmatrix}
%	\end{align*}
%\end{theorem}

\textbf{An illustration via transfer matrix approach:} In this part, apart from $\mathcal{P}_{n}(x)$ \eqref{P_n x for lambda= 1/2}, as an extra advantage, we can obtain some additional information about the second kind $R_{II}$ polynomials $\mathcal{Q}_{n}(x)$ \eqref{Q_n x for lambda= 1/2}. The transfer matrix $\mathbf{N}_k$ corresponding to the perturbation discussed in \Cref{Co-dilation and Chain sequences} is
\begin{align*}
\mathbf{N}_k = \begin{bmatrix}
	(x^2+1)/2 & -x(x^2+1)/4 \\
	0 & (x^2+1)/2
\end{bmatrix}.	
\end{align*}
The perturbed polynomials $\mathcal{\tilde{P}}_{n}(x)$ and $\mathcal{\tilde{Q}}_{n}(x)$ are obtained using \Cref{transfer matrix theorem R2 pp} as
\begin{align*}
	 (x^2+1)/2 \begin{bmatrix}
		\mathcal{\tilde{P}}_{n}(x)	\\
		-\mathcal{\tilde{Q}}_{n}(x)
	\end{bmatrix} & = \mathbf{N}_k \begin{bmatrix}
		\mathcal{P}_{n}(x)	\\
		-\mathcal{Q}_{n}(x)
	\end{bmatrix},
\end{align*}
which implies
\begin{align*}
\mathcal{\tilde{P}}_{n}(x)&= i\left(\dfrac{x-i}{2}\right)^{n+1}-i\left(\dfrac{x+i}{2}\right)^{n+1}, \\
\mathcal{\tilde{Q}}_{n}(x)& = \mathcal{Q}_{n}(x), \quad n \geq 1,
\end{align*}
It should be noted that, as expected, the polynomials $\mathcal{\tilde{P}}_{n}(x)$ are same as the polynomials $\mathcal{P}'_{n}(x)$ obtained in \Cref{Co-dilation and Chain sequences}. Further, we see that the second kind polynomials $\mathcal{Q}_{n}(x)$ remains unaffected under the aforementioned perturbation (it is easy to verify that this is not true in general) and in this case, $\mathcal{\tilde{Q}}_{n+1}(x)=\mathcal{Q}_{n+1}(x)=\mathcal{\tilde{P}}_{n}(x)$.  

\section{Concluding remarks}
\begin{enumerate}
\item \textbf{An Application:} 
The electrostatic energy $E=E(x_1,x_2, \ldots, x_m)$ \cite{Finkelshtein Ribeiro Ranga Tyaglov CRR 2020} is given by
\begin{align*}
E = -\sum_{1 \leq j \leq i \leq m} \ln|x_i-x_j|+\dfrac{\zeta_m}{2} \sum_{j=1}^{m}[\ln|x_j-i|+\ln|x_i+i|]-\theta\sum_{j=1}^{m} \arctan(x_j).
\end{align*}
where the electrostatic field consists of $i$ and $-i$ two fixed negative charges of size $\zeta_m$, $m$ movable positive unit charges and an external energy field of arctan type. In \cite[Theorem 3.1]{Finkelshtein Ribeiro Ranga Tyaglov PAMS 2019}, it was shown that the set $\{x_1^{m}(e),x_2^{m}(e)\ldots x_m^{m}(e) \}$ that maximizes E are zeros of $\mathcal{P}_{m}(e;x)$. Perturbation of the form \eqref{co-recursive condition R2} in \eqref{CRR recurrence} gives a new set of zeros, say $\{x_1^{m}(e;\mu),x_2^{m}(e;\mu)\ldots x_m^{m}(e;\mu) \}$. It would be of interest to know whether the zeros of perturbed polynomial too maximizes E or some analogue of \cite[Theorem 3.1]{Finkelshtein Ribeiro Ranga Tyaglov PAMS 2019} would exist.
\item \textbf{Open problem:} In \Cref{Distribution of Zeros R2 pp}, behaviour of zeros has been discussed for a particular case when $a_n=-i$ and $b_{n} = i$ in recurrence relation \eqref{R2}. It would be interesting to investigate behaviour of zeros for other real and complex values of $a_n$ and $b_n$ and then study the interlacing relation between perturbed and unperturbed $R_{II}$ polynomials. This demands extension of results analogous to those discussed in \cite{Esmail Ranga 2018} to a more general set up.
\end{enumerate}

\subsection*{Acknowledgments.} This research work of the second author is supported by the MATRICS Project No. MTR/2019/000029/MS of Science and Engineering Research Board, Department of Science and Technology, New Delhi, India.

\end{document}